\theoremstyle{plain}
\newtheorem{theorem}{Theorem}[section]
\newtheorem{prop}[theorem]{Proposition}
\newtheorem{corollary}[theorem]{Corollary}
\newtheorem{lemma}[theorem]{Lemma}
\theoremstyle{definition}
\newtheorem{remark}[theorem]{Remark}
\DeclareMathOperator*{\bigor}{OR}
\DeclareMathOperator*{\expo}{exp}
\DeclareMathOperator*{\primes}{primes}
\newcommand{\ldd}{{\bbslash}}
\newcommand{\rdd}{{\sslash}}
\newcommand{\m}[1]{{\mathbf {#1} }}
\newcommand{\bb}[1]{\mathbb {#1}}
\newcommand{\jn}{\vee}
\newcommand{\mt}{\wedge}
\newcommand{\ra}{\rightarrow}
\newcommand{\lra}{\leftrightarrow}
\newcommand{\rd}{{/}}
\newcommand{\ld}{{\setminus}}
\numberwithin{equation}{section}
\begin{document}


\title[Unilinear residuated lattices]{Unilinear residuated lattices:\\
axiomatization, varieties and FEP}



\author{Nikolaos Galatos}
\address[N. Galatos]{Department of Mathematics\\
University of Denver\\Colorado\\USA}
\email{ngalatos@du.edu}
\author{Xiao Zhuang}
\address[Xiao Zhuang]{Department of Mathematics\\
University of Denver\\Colorado\\USA}
\email[Corresponding author]{xiao.zhuang@du.edu}


\thanks{}

\dedicatory{}

\subjclass{
06F05; 
08B15, 
03G10. 
03B47, 
}

\keywords{unilinear residuated lattices, 
axiomatization, subvarieties, finite embeddability property}

\begin{abstract}
We characterize all residuated lattices that have height equal to $3$ and show that the variety they generate has continuum-many subvarieties.
More generally, we study unilinear residuated lattices: their lattice is a union of disjoint incomparable chains, with bounds added.
We we give two general constructions of unilinear residuated lattices, provide an axiomatization and a proof-theoretic calculus for the variety they generate, and prove the finite model property for various subvarieties.
\end{abstract}

\maketitle


\section{Introduction}

Residuated lattices generalize various well-known algebraic structures such as lattice-ordered groups, the ideals of a unital ring, and relation algebras, among others.
They also form algebraic semantics for various substructural logics, such as classical, intuitionistic, relevance, linear and many-valued logic; as a result further examples of residuated lattices include Boolean, Heyting, MV and BL-algebras.
We refer the reader to \cite{galatos2007residuated} for an introduction to residuated lattices and substructural logics.

A substantial amount of work has focused on the study of totally-ordered residuated lattices (residuated chains) and the variety they generate (semilinear residuated lattices).
Here, we start our study by exploring the other extreme: residuated lattices whose elements form an antichain, with two bounds added to obtain a lattice.
In Section~\ref{s: M}, we show that all residuated lattices of height 3 are precisely the ones consisting of two parts: a zero-cancellative monoid and a semigroup of at most three elements, and we specify the process for putting these two parts together.

In Section~\ref{s: axiom} we provide an axiomatization for the positive universal class of residuated lattices of height up to three and of the variety $\mathsf{M}$  it generates.
More generally, we consider the class $\mathsf{URL}$ of \emph{unilinear} residuated lattices: they are based on disjoint unions of incomparable chains with two additional bounds.
We axiomatize the positive universal class $\mathsf{URL}$ and the variety $\mathsf{SRL}$ of \emph{semiunilinear} residuated lattices it generates.
Moreover, we show that the finitely subdirectly irreducible members of $\mathsf{SRL}$ are precisely the unilinear ones.
In the particular case of $\mathsf{M}$, the simplicity of height-3 lattices directly gives the semisimplicity of $\mathsf{M}$, but we further show that the variety $\mathsf{bM}$, containing algebras on the expanded language that includes the bounds, is a discriminator variety.
We conclude the section with a discussion of the proof-theory of $\mathsf{SRL}$.
In particular we present a hypersequent calculus for $\mathsf{SRL}$ that enjoys the cut-elimination property, thus resulting in an analytic system for $\mathsf{SRL}$.

In Section~\ref{s: continuum} we show that there are continuum-many subvarieties of $\mathsf{M}$.
These are actually subvarieties of $\mathsf{CM_G}$, the variety generated by height-3 unilinear residuated lattices where the middle layer is an abelian group.
In fact we show that subvarieties of $\mathsf{CM_G}$ correspond to $\mathsf{ISP_U}$-classes of abelian groups and we further present a completely combinatorial characterization of the subvariety lattice of $\mathsf{CM_G}$ (without any reference to group theory).
We extend this characterization a little further, by allowing the middle layer of the residuated lattice to also include some semigroup elements, coming from the characterization in Section~\ref{s: M}.

Section~\ref{s: FEP} contains a proof of the finite embeddability propery (FEP) for the variety $\mathsf{CM_G}$, thus contrasting the complexity coming from the continuum-many subvarieties with the fact that the universal theory of $\mathsf{CM_G}$ is decidable.
We also establish the FEP for more subvarieties of $\mathsf{SRL}$, which do not have the height-3 restriction.
To be more precise, the FEP holds for every subvariety of $\mathsf{SRL}$ that is axiomatized by equations in the language of multiplication, join and $1$, and satisfies any weak commutativity axiom and any knotted rule; we establish this result by using the method of residuated frames.

Finally, in Section~\ref{s: compact}, we focus our attention on unilinear residuated lattices $\m R$ where $M: = R \setminus \{\bot, \top\}$ is a submonoid and the bounds are absorbing with respect to the elements of $M$; we call such unilinear residuated lattices \emph{compact}.
We provide two constructions of compact residuated lattices, with the first one coming from a finite cyclic monoid.
In the second one $M$ is the cartesian product of a residuated chain and a cancellative monoid, relative to a 2-cocycle; thus it is a generalization of the semidirect product of monoids. 

\medskip

We continue with some preliminaries on residuated lattices.
A \emph{residuated lattice} is an algebra $(R, \wedge, \vee, \cdot, \backslash, /, 1)$ where
\begin{itemize}
    \item $(R, \wedge, \vee)$ is a lattice,

    \item $(R, \cdot, 1)$ is a monoid, and

    \item $xy \leq z$ iff $y \leq x \backslash z$ iff $x \leq z/y$ for all $x, y, z \in R$.
\end{itemize}

The last condition above is called \emph{residuation}.
Given posets $\mathbf{P}$ and $\mathbf{Q}$, a map $f: \mathbf{P} \rightarrow \mathbf{Q}$ is said to be \emph{residuated} if there exists a map $f^*: \mathbf{Q} \rightarrow \mathbf{P}$ such that
\[
f(x) \leq y \text{ iff } x \leq f^*(y)
\]
for all $x \in P$, $y \in Q$.

The following result is folklore in the theory of residuated maps.

\begin{lemma} \label{res.equivalence}
    A function $g$ from a poset $\mathbf{P}$ to a poset $\mathbf{Q}$ is residuated if and only if the set $\{x \in P: g(x) \leq y\}$ has a maximum for all $y \in Q$ and $g$ is order-preserving.
\end{lemma}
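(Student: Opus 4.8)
The plan is to prove both implications directly from the definitions, the organizing observation being that the candidate for the residual $g^*$ is essentially forced: if a residual exists at all, it must send $y$ to $\max\{x \in P : g(x) \le y\}$.

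First I would treat the forward direction. Assume $g$ is residuated, witnessed by some $g^*$. To see that $g$ is order-preserving, take $x_1 \le x_2$; applying residuation to the trivial inequality $g(x_2) \le g(x_2)$ gives $x_2 \le g^*(g(x_2))$, hence $x_1 \le g^*(g(x_2))$, and applying residuation in the other direction yields $g(x_1) \le g(x_2)$. To produce the required maxima, I would show that $g^*(y)$ is itself the largest element of $S_y := \{x \in P : g(x) \le y\}$: residuation applied to $g^*(y) \le g^*(y)$ gives $g(g^*(y)) \le y$, so $g^*(y) \in S_y$; and any $x \in S_y$ satisfies $x \le g^*(y)$ immediately by residuation.

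For the converse, assume $g$ is order-preserving and each $S_y$ has a maximum, and define $g^*(y) := \max S_y$. It then remains to verify the residuation equivalence $g(x) \le y \Longleftrightarrow x \le g^*(y)$. Left to right is immediate, since $g(x) \le y$ places $x$ in $S_y$, whence $x \le \max S_y = g^*(y)$. For right to left, from $x \le g^*(y)$ and order-preservation of $g$ we get $g(x) \le g(g^*(y))$; since $g^*(y) = \max S_y$ belongs to $S_y$, we have $g(g^*(y)) \le y$, and transitivity gives $g(x) \le y$.

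I do not anticipate a real obstacle here; the only point that deserves explicit mention is that order-preservation is genuinely used (in the right-to-left half of the converse) and that both directions of residuation are invoked in deriving order-preservation in the forward implication, so neither hypothesis in the stated characterization is redundant.
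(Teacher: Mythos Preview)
Your proposal is correct and follows essentially the same argument as the paper's proof: both directions are handled by recognizing that $g^*(y)$ must equal $\max S_y$, with the same applications of residuation to the trivial inequalities $g(x_2)\le g(x_2)$ and $g^*(y)\le g^*(y)$, and the same use of order-preservation in the converse. The only cosmetic difference is that you verify order-preservation before the maximum property in the forward direction, whereas the paper does them in the opposite order.
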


\begin{proof}
    Let $S_y = \{x \in P: g(x) \leq y\}$ and we assume that $g$ is residuated with residual $g^*$. 
    Note that $g^*(y) \leq g^*(y)$ yields $g(g^*(y)) \leq y$ so $g^*(y) \in S_y$.
    Also, for all $x \in S_y$, $g(x) \leq y$ hence $x \leq g^*(y)$.
    Therefore, $g^*(y) = \max S_y$.

    If $x_1 \leq x_2$, then since $g(x_2) \leq g(x_2)$ yields $x_2 \leq g^*(g(x_2))$, we get $x_1 \leq g^*(g(x_2))$; hence $g(x_1) \leq g(x_2)$.
    Therefore, $g$ is order-preserving.

    Now suppose $S_y$ has a maximum for all $y \in Q$ and $g$ preserves the order.
    We define $g^*: Q \rightarrow P$ by $g^*(y) = \max S_y$; clearly $g^*$ is order-preserving.
    If $g(x) \leq y$ for some $x \in P$, $y \in Q$, then $x \in S_y$ and $x \leq g^*(y)$ by definition.
    Conversely, if $x \leq g^*(y)$, then $g(x) \leq g(g^*(y))$ since $g$ is order-preserving.
    Moreover, $g^*(y) \in S_y$ so $g(g^*(y)) \leq y$; thus $g(x) \leq y$.
\end{proof}

We mention that if the assumption that $\{x \in P: g(x) \leq y\}$ has a maximum is replaced by the demand that it has a join, then the order-preservation of $g$ is not enough to give residuation.

Note that a lattice-ordered monoid supports a residuated lattice iff left and right multiplication are residuated.
So Lemma~\ref{res.equivalence} yields the following fact.

\begin{corollary}\label{c: RLmax}
    A lattice-ordered monoid $\mathbf{R}$ is a reduct of a residuated lattice iff multiplication is order-preserving and for all $x, z \in R$, the sets $\{y \in R: \, xy \leq z\}$ and $\{y \in R: \, yx \leq z\}$ have maximum elements.
    In such a case the expansion to a residuated lattice is unique by $x \ld z = \max \{y \in R: xy \leq z\}$ and $z \rd x = \max \{y \in R: yx \leq z\}$.
\end{corollary}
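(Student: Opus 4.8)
The plan is to read this off directly from Lemma~\ref{res.equivalence} by applying it to the left and right translation maps. For a fixed $x \in R$, write $\lambda_x \colon R \to R$ for $\lambda_x(y) = xy$ and $\rho_x \colon R \to R$ for $\rho_x(y) = yx$. The residuation condition $xy \leq z \Leftrightarrow y \leq x\backslash z$ says precisely that $\lambda_x$ is residuated with residual $y \mapsto x\backslash y$, and $yx \leq z \Leftrightarrow y \leq z/x$ says that $\rho_x$ is residuated with residual $y \mapsto y/x$. Hence, as recorded in the Note preceding the statement, a lattice-ordered monoid $\mathbf{R}$ admits an expansion to a residuated lattice if and only if every $\lambda_x$ and every $\rho_x$ is residuated.

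Next I would invoke Lemma~\ref{res.equivalence} for each translation: $\lambda_x$ is residuated iff it is order-preserving and the set $\{y \in R : \lambda_x(y) \leq z\} = \{y \in R : xy \leq z\}$ has a maximum for every $z \in R$; symmetrically, $\rho_x$ is residuated iff it is order-preserving and each $\{y \in R : yx \leq z\}$ has a maximum. It then remains to note that ``every $\lambda_x$ and every $\rho_x$ is order-preserving'' is equivalent to ``multiplication is order-preserving'' as a binary operation: order-preservation of all the $\lambda_x$ is monotonicity in the right argument, order-preservation of all the $\rho_x$ is monotonicity in the left argument, and together they give $ac \leq bc \leq bd$ whenever $a \leq b$ and $c \leq d$. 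Chaining these equivalences yields the stated biconditional, with the two families of maximum conditions matching exactly the two families of sets in the statement.

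For the uniqueness claim I would reuse the first paragraph of the proof of Lemma~\ref{res.equivalence}: if $\backslash$ is any operation making $\mathbf{R}$ into a residuated lattice, then $\lambda_x$ is residuated with residual $y \mapsto x\backslash y$, and the residual of a residuated map is uniquely determined by $g^*(z) = \max\{y : g(y) \leq z\}$; hence $x\backslash z = \max\{y \in R : xy \leq z\}$ is forced, and likewise $z/x = \max\{y \in R : yx \leq z\}$. So at most one residuated-lattice structure sits over a given lattice-ordered monoid, and when the maximum conditions hold these formulas do define one.

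I do not expect a genuine obstacle here; the only point requiring care is the bookkeeping between monotonicity of the binary operation and monotonicity of all its unary translations, and making sure that both directions of each ``iff'' are actually furnished by Lemma~\ref{res.equivalence} rather than just one.
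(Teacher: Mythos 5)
Your proposal is correct and matches the paper's intended argument: the corollary is obtained exactly by noting that a lattice-ordered monoid supports a residuated lattice iff all left and right translations are residuated, and then applying Lemma~\ref{res.equivalence} to those translations, with uniqueness coming from the formula $g^*(z)=\max\{y: g(y)\leq z\}$ for residuals. No gaps.
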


\begin{corollary}[Cor~3.12 of \cite{galatos2007residuated}]\label{c: RLjoin}
    A complete lattice-ordered monoid $\mathbf{R}$ is a reduct of a residuated lattice iff multiplication distributes over arbitrary joins.
\end{corollary}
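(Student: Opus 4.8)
The plan is to deduce the statement from Corollary~\ref{c: RLmax}, which already characterizes the lattice-ordered monoids that are reducts of residuated lattices; completeness is what lets us trade its maximum condition for a distributivity condition.

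\textbf{Forward direction.} Suppose $\mathbf{R}$ is a reduct of a residuated lattice $(R, \wedge, \vee, \cdot, \backslash, /, 1)$. Fix $x \in R$ and a family $\{y_i\}_{i \in I}$, with $I$ possibly empty. For every $z \in R$, iterating residuation gives
\begin{align*}
x\Bigl(\bigvee_{i} y_i\Bigr) \le z
&\iff \bigvee_i y_i \le x\backslash z
\iff (\forall i)\ y_i \le x\backslash z \\
&\iff (\forall i)\ xy_i \le z
\iff \bigvee_i xy_i \le z .
\end{align*}
As $z$ is arbitrary, $x(\bigvee_i y_i) = \bigvee_i xy_i$; the symmetric computation with $/$ handles joins in the first coordinate. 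So multiplication distributes over arbitrary joins.

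\textbf{Backward direction.} Suppose $\mathbf{R}$ is complete and multiplication distributes over arbitrary joins. Applying this to a two-element join: if $y_1 \le y_2$ then $xy_2 = x(y_1 \vee y_2) = xy_1 \vee xy_2 \ge xy_1$, and symmetrically on the right, so multiplication is order-preserving. Now fix $x, z \in R$ and set $S = \{y \in R : xy \le z\}$. Distributing over the empty join gives $x\bot = \bot \le z$, so $\bot \in S$ and $S$ is nonempty; by completeness $w := \bigvee S$ exists, and
\[
xw = x\Bigl(\bigvee_{y \in S} y\Bigr) = \bigvee_{y \in S} xy \le z ,
\]
whence $w \in S$, i.e.\ $w = \max S$. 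The same argument yields a maximum for $\{y \in R : yx \le z\}$. Corollary~\ref{c: RLmax} now applies, giving the (unique) residuated lattice reduct, with residuals read off from these maxima.

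The argument is routine; the one point that needs attention is the empty join, which is precisely what forces $x\bot = \bot$ and thereby makes each $S$ nonempty, so that its join is an actual maximum and not merely a least upper bound — recall the remark after Lemma~\ref{res.equivalence} that a join alone does not suffice. Alternatively one could cite the folklore fact that, between complete lattices, the residuated maps are exactly the join-preserving ones, but routing through Corollary~\ref{c: RLmax} keeps the proof self-contained.
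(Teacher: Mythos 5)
Your argument is correct. The paper offers no proof of Corollary~\ref{c: RLjoin} (it is simply cited from \cite{galatos2007residuated}), but your route is exactly the one the surrounding text suggests: the forward direction is the standard residuation computation, and the backward direction correctly reduces to Corollary~\ref{c: RLmax} by checking order-preservation and that each of $\{y \in R: xy \leq z\}$ and $\{y \in R: yx \leq z\}$ has a maximum, with the empty join rightly invoked to get $x\bot = \bot$ so that these sets contain $\bot$ and their joins are genuine maxima rather than mere least upper bounds (in line with the remark following Lemma~\ref{res.equivalence}).
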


In particular, multiplication distributes over the empty join, if it exists; so if there is a bottom element $\bot$, then $x \cdot  \bot=\bot = \bot \cdot x$, for all $x$.
For convenience, we set $x \ldd z := \{y \in R: xy \leq z\}$ and $z \rdd x := \{y \in R: yx \leq z\}$ for $x, z \in R$.

\begin{remark}\label{bottopdivison}
    Let $\mathbf{P} = (P, \wedge, \vee, \cdot, \bot, \top)$ be a bounded lattice-ordered semigroup.
    Since $\bot x = \bot$ for all $x \in P$, we have $\bot \ldd x = P$, so $\bot \backslash x = \max \bot \ldd x = \top$ for all $x \in P$.
    Also, since $x \ldd \top = P$, $x \ld \top = \top$ for all $x \in P$.
    Similarly, $x \rd \bot = \top$ and $\top \rd x = \top$ for all $x \in P$.
\end{remark}

A residuated lattice with bounds $\bot$ and $\top$ is called \emph{rigorously compact} if $\top x = x \top = \top$ for all $x \not = \bot$.
In this case we also have that $xy = \bot \Rightarrow x = \bot \text{ or } y = \bot$, since otherwise we get $x \not = \bot \not = y$, so $\bot = \bot \top = xy \top = x \top = \top$, a conradiction. 
Note that in rigorously compact residuated lattices we have $\bot \ld x = x \rd \bot = \top = x \ld \top = \top \rd x$, $\top \ld y = y \rd \top = \bot$, $z \ld \bot = \bot = \bot \rd z$ for all $x \in R$, $y \not = \top$, $z \not = \bot$.

\section{Residuated Lattices on \texorpdfstring{$\mathbf{M}_X$}{Mx}}\label{s: M}

Residuated lattices based on chains have been studied extensively.
We start by looking into residuated lattices based on an antichain, with extra top and bottom elements.

\subsection{Properties}

Given a set $X$, we denote by $\mathbf{M}_X$ the lattice over the set $X \cup \{\bot, \top\}$, where $\top$ is the top element, $\bot$ is the bottom element, and $x \vee y = \top$ and $x \wedge y=\bot$, for distinct $x,y \in X$. 

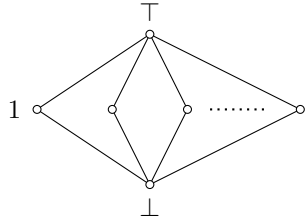
\begin{figure}[ht]
\centering
\begin{tikzpicture}
\draw (1.5, 1) -- (0, 0) -- (1.5, -1);
\draw (1.5, 1) -- (1, 0) -- (1.5, -1);
\draw (1.5, 1) -- (2, 0) -- (1.5, -1);
\draw (1.5, 1) -- (3.5, 0) -- (1.5, -1);
\draw[thick, dotted] (2.3, 0) -- (3.05, 0);

\filldraw [color = black, fill = white] (1.5, 1) circle (1.5pt)
(1.5, 1.3) node {$\top$};
\filldraw [color = black, fill = white] (1.5, -1) circle (1.5pt)
(1.5, -1.3) node {$\bot$};
\filldraw [color = black, fill = white] (0, 0) circle (1.5pt)
(-0.3, 0) node {$1$};
\filldraw [color = black, fill = white] (1, 0) circle (1.5pt);
\filldraw [color = black, fill = white] (2, 0) circle (1.5pt);
\filldraw [color = black, fill = white] (3.5, 0) circle (1.5pt);
\end{tikzpicture}
\caption{A residuated lattice over $M_X$}
\end{figure}

The characterization of all residuated lattices based on $\mathbf{M}_X$ where $X$ is non-empty and closed under multiplication is known (\cite{galatos2007residuated} p. 205): $X$ is a cancellative monoid, $\bot$ is absorbing in $M_X$ and $\top$ is absorbing in $X \cup \{\top\}$.
We will characterize all residuated lattices based on $\mathbf{M}_X$, even when $X$ is not closed under multiplication.

Recall that in every bounded residuated lattice the bottom element is absorbing.
Also, in a residuated lattice based on $\mathbf{M}_X$ we have $\top x, x \top \in \{x, \top\}$ for all $x$, since $1 \leq \top$ implies $x \leq \top x$ and $x \leq x \top$. 

In a residuated lattice $\bf{R}$ on $\mathbf{M}_X$, we define
\begin{align*}
    U_R = \{x \in R \setminus \{\bot, \top\}: x \top = \top\} \, \text{ and } \, Z_R = \{x \in R \setminus \{\bot, \top\}: x \top = x\},
\end{align*}
the set of elements that behave as units for $\top$ and the set of elements that behave as zeros for $\top$; when the residuated lattice is clear from the context we drop the subscript in $U_R$ and $Z_R$.
Note that $1 \in U$ and $U \cap Z = \emptyset$.

A monoid $\mathbf{S}$ with a zero (absorbing element) $0$ is called \emph{$0$-cancellative} if for all $x, y, z \in S$,
\begin{align*}
xy = xz \neq 0 & \; \Rightarrow \; y = z\\
yx = zx \neq 0 & \; \Rightarrow \; y = z.
\end{align*}

An element $c$ in a residuated residuated $\mathbf{R}$ lattice is called \emph{central} if $xc = cx$, for all $x \in R$.
Also, we denote by $\sqcup$ the disjoint union operation.

\begin{theorem}\label{t:deconstruction}
If $\mathbf{R}$ is a residuated lattice based on $\mathbf{M}_X$, then
\begin{enumerate}
    \item $\top$ is central in $\mathbf{R}$ and $R = U \sqcup Z \sqcup \{\bot, \top\}$.
    
    \item $U_{\top} = U \cup \{\top\}$ is a $\top$-cancellative submonoid of $\mathbf{R}$.

    \item $Z_{\bot} = Z \cup \{\bot\}$ is a subsemigroup of $\mathbf{R}$ with zero $\bot$, $|Z_{\bot}| \leq 3$ and $xy = \bot$ for all distinct $x, y \in Z_{\bot}$.\\    
    Also, either $Z_{\bot}$ is idempotent, or $Z_{\bot} = \{b, \bot\}$ with $b^2 = \bot$.
    
    \item $ab = ba = b$ for all $a \in U$ and $b \in Z$.	
\end{enumerate}
\end{theorem}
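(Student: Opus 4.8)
The plan rests on one recurring device: in any residuated lattice multiplication distributes over binary joins, in $\mathbf{M}_X$ we have $x\vee y=\top$ whenever $x,y\in X$ are distinct, and (as already observed) $x\top,\top x\in\{x,\top\}$ for every $x$, since $1\le\top$. So I would repeatedly evaluate a product at a join of the form $x\vee y=\top$ and read off the few possibilities. I would also use that $\bot$ is absorbing, that $\top^2=\top$, and Corollary~\ref{c: RLmax} (each set $\{y:ay\le z\}$ has a maximum). Throughout I would assume $1\in X$; the degenerate cases $1=\bot$ ($\mathbf R$ trivial) and $1=\top$ ($\mathbf R$ a chain of at most three elements) are dispatched by inspection.

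For (1), the decomposition $R=U\sqcup Z\sqcup\{\bot,\top\}$ is immediate from $x\top\in\{x,\top\}$ and $x\ne\top$ for $x\in X$, so the real task is centrality of $\top$. I would fix $a\in X\setminus\{1\}$ and rule out the mismatch $a\top=\top$, $\top a=a$: using the two assumptions and associativity, $a=\top a=(a\top)a=a(\top a)=a\,a=a^2$, and hence $a\top=a(a\vee 1)=a^2\vee a=a$, contradicting $a\top=\top$; the symmetric mismatch $a\top=a$, $\top a=\top$ is excluded the same way via $(a\vee 1)a$. This forces $a\top$ and $\top a$ to be always both $\top$ or both $a$, giving centrality of $\top$ and also showing that $U$ and $Z$ do not depend on which side one multiplies $\top$ by.

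For (2)--(4) I would run the same device. Closure of $U_\top$ follows from $(ab)\top=a(b\top)=\top$ and $ab\ne\bot$, and $1\in U$, so $U_\top$ is a submonoid with absorbing element $\top$; and if $xy=xz\ne\top$ in $U_\top$ then none of $x,y,z$ equals $\top$, and $y\ne z$ would give $\top=x(y\vee z)=xy\vee xz=xy$, contradicting $xy\ne\top$, so $y=z$ (right cancellation mirroring this using centrality). For $Z_\bot$: distinct $x,y\in Z$ satisfy $xy\le x\top\wedge\top y=x\wedge y=\bot$, and $a^2\le a\top=a$ puts $a^2\in\{a,\bot\}$; so $Z_\bot$ is a subsemigroup with zero $\bot$, three distinct elements $a,b,c$ of $Z$ would force $a=a\top=a(b\vee c)=ab\vee ac=\bot$, whence $|Z_\bot|\le 3$, and a two-element $Z=\{a,b\}$ with $a^2=\bot$ would make $\{y:ay\le\bot\}$ contain the distinct $X$-elements $a,b$ but not $\top$, hence have no maximum, contradicting Corollary~\ref{c: RLmax} --- so a two-element $Z$ is idempotent, the dichotomy being trivial when $|Z|\le 1$. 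Finally, for $a\in U$ and $b\in Z$ I have $ab\le\top b=b$ and $ba\le b\top=b$; if $ab=\bot$ then $a^2=a^2\vee ab=a(a\vee b)=a\top=\top$ (using $a\ne b$), whence $b=\top b=a^2b=a(ab)=a\bot=\bot$, a contradiction, so $ab=b$, and $ba=b$ likewise from $(a\vee b)a$.

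The step I expect to be the main obstacle is the centrality argument in (1): it is the one place where the ``evaluate at a join'' device must first be primed by the preliminary identity $a^2=a$ before it produces anything, and spotting that is what makes the argument more than bookkeeping. The only other place that genuinely uses residuation, rather than mere lattice-ordered-monoid manipulation, is the no-maximum argument forcing idempotency of a two-element $Z$ in (3); everything else is repeated application of the same observation.
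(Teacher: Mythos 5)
Your proposal is correct, and while it runs on the same basic device as the paper (evaluate a product at a join $x \vee y = \top$ and use distributivity of multiplication over join), several of your sub-arguments take a genuinely different route. For centrality in (1), the paper simply computes, for $x$ incomparable to $1$, that $\top x = (1 \vee x)x = x \vee x^2 = x(1 \vee x) = x\top$: both sides are literally the same element, so no case elimination and no preliminary identity $a^2 = a$ is needed -- the step you single out as the main obstacle simply disappears in the direct computation. For the dichotomy in (3), the paper again stays inside distributivity: if $b^2 = \bot$ and $c \in Z_{\bot}\setminus\{b,\bot\}$, then $b^2 = b^2 \vee bc = b(b \vee c) = b\top = b$, a contradiction; your alternative -- that $\{y \in R : ay \le \bot\}$ would contain the two incomparable elements $a, b$ but not $\top$, hence have no maximum, contradicting Corollary~\ref{c: RLmax} -- is a valid and rather elegant use of residuation which the paper's proof avoids, at the cost of leaving the purely lattice-ordered-monoid level. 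Your argument for (4) (deriving $a^2 = \top$ from the assumption $ab = \bot$ and then concluding $b = \bot$) is also correct but differs from the paper's one-line chain $b = \top b = \top a \cdot b = ab \cdot \top = a \cdot b\top = ab$. One small inaccuracy worth flagging: in the degenerate case $1 = \top$ the algebra need not be a chain of at most three elements -- the four-element Boolean algebra is integral and is based on $\mathbf{M}_X$ with $|X| = 2$ (the paper itself lists it among the integral examples). The theorem still holds there and the promised inspection is easy (one gets $U = \emptyset$ and $Z = X$), so this does not damage your argument, but the parenthetical description of that case is false as stated.
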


\begin{proof}

(1) We will show that $\top x = x \top$, for all  $x \in R$.
If $x$ is $\top, \bot$ or $\top$, then $\top x$ and $x \top$ both are equal to $\top, \bot, x$, respectively.
Also, if $x$ is incomparable to $1$, then $x \vee 1 = \top$, so $\top x = (1 \vee x) x = x \vee x^2 = x (1 \vee x) = x \top$.

Since $\top$ is central and $x \top \in \{x, \top\}$ for all $x$, we have that for every $x \in R \setminus \{\bot, \top\}$ either $x\in U$ or $x\in Z$, but not both.

(2) If $a, b \in U_{\top}$, then $ab \cdot \top = a \cdot b \top = a \top = \top$, so $ab \in U_{\top}$.
Similarly, $ba \in U_{\top}$ and $\top$ is a zero for $U_{\top}$.

If $x, y, z \in U_{\top}$ and $xy = xz \neq \top$, then $x (y \vee z) = xy \vee xz = xy \neq \top$.
So $y \vee z \not = \top$, because $x \top = \top$; in particular, $y \neq \top \neq z$.
Also, since $y, z \in U_{\top}$ and $\bot \not \in U_{\top}$, we get $y \neq \bot \neq z$; hence $y,z \in X$ and $y \vee z \not = \top$.
Since, $\mathbf{R}$ is based on $\mathbf{M}_X$, we get that $y = z$.
Similarly, we obtain the other implication of $\top$-cancellativity. 

(3) If $c, d \in Z_{\bot}$, then $cd \cdot \top = c \cdot d \top = cd$.
Also, $cd \leq c \top = c < \top$; hence $cd \in Z_{\bot}$.
Clearly, $\bot$ is a zero for $Z_{\bot}$.

Since $Z_{\bot} \subseteq X \cup \{\bot\}$, for distinct $x, y \in Z_{\bot}$, we have $xy = xy \wedge xy \leq x \top \wedge \top y = x \wedge y = \bot$.
So, if there were distinct $x, y, z \in Z$, then $y \vee z = \top$ and $x = x \top = x (y \vee z) = xy \vee xz = \bot \vee \bot = \bot$, a contradiction.
Therefore $|Z_{\bot}| \leq 3$.

If $b$ is a non-idempotent element of $Z_{\bot} \subseteq X \cup \{\bot\}$, then $b \not = \bot$ and $b^2 \leq b \top = b$, so $b^2=\bot$.
If $c$ is an element of $Z_{\bot}$ distinct from $b$ and $\bot$, then $b^2 = b^2 \vee \bot = b^2 \vee bc = b (b \vee c) = b \top = b$, a contradiction.
So, if $Z_{\bot}$ is not idempotent, then $Z_{\bot} = \{b, \bot\}$ and $b^2=\bot$.

(4) For $a \in U$ and $b \in Z$, using the centrality of $\top$, we get
$$b = \top b = \top a \cdot b = \top \cdot ab = ab \cdot \top = a \cdot b \top = ab.$$
Similarly, we get $ba = b$.
\end{proof}

It is straight-forward to see that that the possible options for the subsemigroup $Z_{\bot}$, mentioned in Theorem~\ref{t:deconstruction}(3) are precisely the ones in Figure~\ref{f:4tables}.
\begin{figure}[ht]
\begin{center}
\begin{tabular}{c | c}
 & $\bot$\\
\hline
$\bot$ & $\bot$ 
\end{tabular}
,
\begin{tabular}{c | c c}
 & $\bot$ & $b$\\
\hline
$\bot$ & $\bot$ & $\bot$\\
$b$ & $\bot$ & $b$
\end{tabular}
,
\begin{tabular}{c | c c}
 & $\bot$ & $b$\\
\hline
$\bot$ & $\bot$ & $\bot$\\
$b$ & $\bot$ & $\bot$
\end{tabular}
,
\begin{tabular}{c | c c c}
 & $\bot$ & $b_1$ & $b_2$\\
\hline
$\bot$ & $\bot$ & $\bot$ & $\bot$\\
$b_1$ & $\bot$ & $b_1$ & $\bot$\\
$b_2$ & $\bot$ & $\bot$ & $b_2$ 	
\end{tabular}
.
\end{center}
\caption{Four multiplication tables}
\label{f:4tables}
\end{figure}

Note that if a residuated lattices based on $\mathbf{M}_X$ is integral (i.e., it satisfies $x \leq 1$), then $U = \emptyset$.
By taking into account all of the possibilities for $Z_{\bot}$, it follows that the only integral residuated lattices based on $\mathbf{M}_X$ are the 2-element and 4-element Boolean algebras, the 3-element Heyting algebra and the 3-element MV-algebra.
The latter two, together with the $3$-element Sugihara monoid, are the only $3$-element residuated chains.

\subsection{Construction and characterization}

We now prove the converse of Theorem~\ref{t:deconstruction}.
Let $\mathbf{A}$ be a $\top$-cancellative monoid with zero $\top$ and $\mathbf{B}$ a semigroup with zero $\bot$, whose multiplication table is one of those in Figure~\ref{f:4tables}.

We define the lattice structure $\mathbf{M}_X$ on the set $R = A \cup B$, where $X = R \setminus \{\bot, \top\}$, $\bot$ is the bottom and $\top$ is the top. 
Also, we define a multiplication on $R$ that extends the multiplications on $\mathbf{A}$ and $\mathbf{B}$ by:
$xy = yx = y$,
for all $x \in A$ and $y \in B$.
We denote by $\mathbf{R}_{\mathbf{A}, \mathbf{B}}$ the resulting algebra.

\begin{theorem} \label{thmM_n}
If $\mathbf{A}$ is a $\top$-cancellative monoid with zero $\top$ and $\mathbf{B}$ is a semigroup with zero $\bot$, whose multiplication table is one of those in Figure~\ref{f:4tables}, then $\mathbf{R}_{\mathbf{A}, \mathbf{B}}$ is the reduct of a residuated lattice based on $\mathbf{M}_X$, where $X = (A \cup B) \setminus \{\bot, \top\}$.
\end{theorem}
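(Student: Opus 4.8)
The plan is to invoke Corollary~\ref{c: RLmax}: it suffices to check that $\mathbf{R}_{\mathbf{A},\mathbf{B}}$ is a lattice-ordered monoid whose multiplication is order-preserving and for which each set $x\ldd z$ and each set $z\rdd x$ has a maximum element. (Alternatively, since $\mathbf{M}_X$ is a complete lattice one could appeal to Corollary~\ref{c: RLjoin} and verify that multiplication distributes over arbitrary joins; the case analysis involved is of comparable length.) Throughout, recall that $A$ and $B$ are disjoint, $\top$ is the zero of $\mathbf{A}$, $\bot$ is the zero of $\mathbf{B}$, the mixing rule reads $ab=ba=b$ for $a\in A$, $b\in B$, and $X=(A\cup B)\setminus\{\bot,\top\}$.

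First I would show that $(R,\cdot,1)$ is a monoid, where $1$ is the identity of $\mathbf{A}$. That $1$ is a two-sided identity is immediate: for $x\in A$ this is the monoid law of $\mathbf{A}$, and for $x\in B$ it is an instance of the mixing rule. For associativity $(pq)r=p(qr)$ I would split into the eight cases determined by the membership of $p,q,r$ in $A$ or in $B$. If all three lie in $A$, resp.\ all three in $B$, this is associativity in $\mathbf{A}$, resp.\ in $\mathbf{B}$. In each remaining case, using the mixing rule together with closure of $A$ and of $B$ under their own products, both sides reduce to the same element — the product in $\mathbf{B}$ of those of $p,q,r$ that lie in $B$, all $A$-factors simply disappearing. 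For order preservation, note that in $\mathbf{M}_X$ one has $p\le q$ only if $p=q$, $p=\bot$, or $q=\top$; since $x\bot=\bot x=\bot$ for all $x$ (by the zero law of $\mathbf{B}$ and the mixing rule) and $x\top,\top x\in\{x,\top\}$ for all $x$, a short check then gives $pz\le qz$ and $zp\le zq$ whenever $p\le q$.

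The substantive step is the existence of maxima. By Remark~\ref{bottopdivison} we may assume $x\ne\bot$ and $z\ne\top$, and I would organize the argument by the location of $x$. If $x\in A$, then $xy\in A$ for $y\in A$ while $xy=y$ for $y\in B$; hence $x\ldd z$ equals $\{\bot\}$ when $z=\bot$, equals $\{\bot,z\}$ when $z\in B\setminus\{\bot\}$, and equals $\{\bot\}$ together with the solution set $\{y\in A:xy=z\}$ when $z\in A\setminus\{\top\}$ — and this last set has at most one element, since $z\ne\top$ and $\mathbf{A}$ is $\top$-cancellative. In every case the maximum is visibly present. If $x\in B\setminus\{\bot\}$, then $xy\in\{x,\bot\}$ for every $y\in R$, so $x\ldd z=R$ (maximum $\top$) when $z=x$, and otherwise $x\ldd z=\{y:xy=\bot\}$, a set containing $\bot$, all of $B\setminus\{x\}$, and $x$ itself precisely when $x^2=\bot$; inspecting the four tables for $\mathbf{B}$ in Figure~\ref{f:4tables} shows this set always has a top element. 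The sets $z\rdd x$ are handled by the mirror-image argument; the only asymmetry, that $\mathbf{A}$ need not be commutative, causes no difficulty because $\top$-cancellativity is a two-sided hypothesis. Corollary~\ref{c: RLmax} then yields that $\mathbf{R}_{\mathbf{A},\mathbf{B}}$ is the reduct of a (unique) residuated lattice, and it is based on $\mathbf{M}_X$ by construction.

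I expect the only real obstacle to be the bookkeeping in this last step — keeping straight which of the four multiplication tables for $\mathbf{B}$ is in force, and how elements of $A$ act on $B$; everything else is a mechanical case split.
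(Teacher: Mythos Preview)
Your argument is correct and complete; the case analysis you sketch goes through without surprises.

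The paper takes the alternative route you mention in your opening paragraph: it invokes Corollary~\ref{c: RLjoin} rather than Corollary~\ref{c: RLmax}, and so verifies that multiplication distributes over joins instead of computing the maxima of $x\ldd z$ and $z\rdd x$. Since $\mathbf{M}_X$ has height~$3$, the only nontrivial join to check is $x(y\vee z)=xy\vee xz$ with $y,z$ distinct elements of $X$, i.e.\ $x\top=xy\vee xz$; the paper then splits on whether $x\in A$ or $x\in B$ and on the locations of $y,z$, using $\top$-cancellativity in exactly the place you do (to force $xy\ne xz$ when $x,y,z\in A\setminus\{\top\}$). Your approach has the mild advantage that it computes the divisions explicitly along the way, which the paper does separately after the proof; the paper's approach avoids having to inspect the four tables for $\mathbf{B}$ individually. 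As you anticipated, the two case analyses are of comparable length and neither hides any real difficulty.
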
 

\begin{proof}
Since associativity holds in $\mathbf{A}$ and $\mathbf{B}$ and every element of $B$ is an absorbing element for $A$, we get that multiplication on $\mathbf{R}$ is associative.

Corollary~\ref{c: RLjoin} ensures that an expansion of $\mathbf{M}_X$ by a monoid structure is a residuated lattice iff multiplication distributes over arbitrary joins.
Since $\bot x = x \bot = \bot$ for all $x \in R$, multiplication distributes over the empty join.
Also, we observe every infinite join is equivalent to a finite join, so it suffices to show $x(y \vee z) = xy \vee xz$ and $(y \vee z)x = yx \vee zx$ for all $x, y, z \in R$ and $y \neq z$.
Here we prove $x(y \vee z) = xy \vee xz$.

If $\bot \in \{x, y, z\}$, then it is easy to check that this equation always holds, so we will assume that $\bot \notin \{x, y, z\}$.
Since $y \neq z$, we get $y \vee z = \top$.
Now we will verify that $x \top = xy \vee xz$.

If $x \in B$, then the left-hand side is $x$.
If, further, $y \in A$ or $z \in A$, then the right-hand side is $x \vee xz = x$ or $xy \vee x = x$, since $xu \leq x$ for all $u \in R$.
If $y, z \in B$, then since $|B| \leq 3$ and $y, z, \bot$ are distinct, we get $B = \{y, z, \bot\}$ and $x = y$ or $x = z$.
In this case, $xy \vee xz = x \vee \bot = x$, so the equation holds.

If $x \in A$, then the left-hand side is equal to $\top$.
If $y \in B$ and $z \in B$, then the right-hand side is $y \vee z = \top$, since $y \neq z$.
If $y \in B$ and $z \in A$, then the right-hand side is $y \vee xz = \top$, since $y \in B$, $xz \in A$ and $\bot \notin \{x, y, z\}$.
Likewise, if $y \in A$ and $z \in B$, then the right-hand side is $\top$.
If $y \in A$ and $z \in A$, then the right-hand side is $xy \vee xz = \top$ since $\mathbf{A}$ is $\top$-cancellative.

Similarly, we can show $(y \vee z)x = yx \vee xz$ for all $x, y, z \in R$.
\end{proof}

By Corollary~\ref{c: RLmax} the divisions are uniquely determined by the equations $x \ld z = \max \{y \in R: xy \leq z\}$ and $z \rd x = \max \{y \in R: yx \leq z\}$, and we give the precise values below.

It turns out that $A \cup \{\bot\}$ and $B \cup \{\top\}$ are subalgebras of $\mathbf{R}_{\mathbf{A}, \mathbf{B}}$.
In particular, $B \cup \{\top\}$ is the $2$-element Boolean algebra, the $3$-element Heyting algebra, $3$-element MV-algebra, or the $4$-element Boolean algebra, corresponding to the tables in Figure~\ref{f:4tables}.
The divisions are given by Remark~\ref{bottopdivison} and
\begin{align*}
    a_1 \backslash a_2 =
    \begin{cases}
        a_3 & \text{ if } a_1 a_3 = a_2\\
        \bot & \text{ otherwise}
    \end{cases}
    \quad
    a_2 / a_1 =
    \begin{cases}
        a_3 & \text{ if } a_3 a_1 = a_2\\
        \bot & \text{ otherwise}
    \end{cases}
\end{align*}
for $a_1, a_2, a_3 \in A$, where the $a_3$ is guaranteed to be unique, when it exists.
Finally, for $a \in A \setminus \{\top\}$ and $b \in B$, any operation between $a$ and $b$ works the same as the operation between $1$ and $b$.
For example, $b \backslash a = b \backslash 1$, $a \wedge b = 1 \wedge b$, $ab = 1b$, etc. 

By combining Theorem~\ref{t:deconstruction} and Theorem~\ref{thmM_n}, we obtain the following characterization.

\begin{corollary}\label{Char_of_RL_on_M_X}
The residuated lattices based on $\mathbf{M}_X$ are precisely the ones of the form $\mathbf{R}_{\mathbf{A}, \mathbf{B}}$, where $\mathbf{A}$ is a $\top$-cancellative monoid with zero $\top$ and $\mathbf{B}$ is a semigroup with zero $\bot$, whose multiplication table is one of those in Figure~\ref{f:4tables}.
\end{corollary}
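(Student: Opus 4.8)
The plan is to read off both directions of the equivalence directly from Theorems~\ref{t:deconstruction} and~\ref{thmM_n}, with only some bookkeeping about the bounds in between. For the right-to-left inclusion, let $\mathbf{A}$ be a $\top$-cancellative monoid with zero $\top$ and $\mathbf{B}$ a semigroup with zero $\bot$ whose multiplication table is one of the four in Figure~\ref{f:4tables}. By Theorem~\ref{thmM_n}, $\mathbf{R}_{\mathbf{A},\mathbf{B}}$ is the reduct of a residuated lattice based on $\mathbf{M}_X$ with $X=(A\cup B)\setminus\{\bot,\top\}$, and by Corollary~\ref{c: RLmax} that residuated expansion is unique; hence $\mathbf{R}_{\mathbf{A},\mathbf{B}}$ is unambiguously a residuated lattice based on $\mathbf{M}_X$.

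For the converse, let $\mathbf{R}$ be an arbitrary residuated lattice based on $\mathbf{M}_X$, and let $\mathbf{A}$ be the submonoid of $\mathbf{R}$ with universe $U_\top = U\cup\{\top\}$ and $\mathbf{B}$ the subsemigroup with universe $Z_\bot = Z\cup\{\bot\}$. Theorem~\ref{t:deconstruction}(1) gives $R=U\sqcup Z\sqcup\{\bot,\top\}$, so $A\cap B=\emptyset$, $A\cup B=R$, and $X=R\setminus\{\bot,\top\}=(A\setminus\{\top\})\cup(B\setminus\{\bot\})$; in particular the lattice reduct of $\mathbf{R}$ is precisely the lattice $\mathbf{M}_X$ used in the construction. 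Theorem~\ref{t:deconstruction}(2) shows $\mathbf{A}$ is a $\top$-cancellative monoid with zero $\top$ (its identity being the identity $1$ of $\mathbf{R}$, which lies in $U$), and Theorem~\ref{t:deconstruction}(3) shows $\mathbf{B}$ is a semigroup with zero $\bot$ whose multiplication table occurs in Figure~\ref{f:4tables}. Thus $\mathbf{A}$ and $\mathbf{B}$ are of the required kind.

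It remains to verify $\mathbf{R}=\mathbf{R}_{\mathbf{A},\mathbf{B}}$. Since the lattice reducts already coincide and the residuated expansion of a lattice-ordered monoid is unique (Corollary~\ref{c: RLmax}), it suffices to see that the two multiplications agree. On $A$ the product of $\mathbf{R}$ is that of the submonoid $\mathbf{A}$, and on $B$ it is that of the subsemigroup $\mathbf{B}$. For a mixed product $xy$ with $x\in A$ and $y\in B$ (the case $x\in B$, $y\in A$ is symmetric): if $\bot\in\{x,y\}$ then $xy=\bot$ by absorption of the bottom element; if $x=\top$ then $y\in B\setminus\{\bot\}=Z$, so $xy=\top y=y\top=y$ using centrality of $\top$ (Theorem~\ref{t:deconstruction}(1)) and $y\in Z$; and if $x\in U$ and $y\in Z$ then $xy=y$ by Theorem~\ref{t:deconstruction}(4). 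These are exactly the values assigned in the definition of $\mathbf{R}_{\mathbf{A},\mathbf{B}}$ (there $\top$ is the zero of $\mathbf{A}$ and $\bot$ is absorbing). Hence the monoid reducts of $\mathbf{R}$ and $\mathbf{R}_{\mathbf{A},\mathbf{B}}$ coincide, and therefore $\mathbf{R}=\mathbf{R}_{\mathbf{A},\mathbf{B}}$.

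I expect no genuine obstacle: the mathematical content lies entirely in the two theorems, and the only points needing care are (i) that the products of $\mathbf{R}$ involving $\bot$ or $\top$ — which are not literally among the cases of Theorem~\ref{t:deconstruction}(4) — match the construction, which follows from absorption of $\bot$ and centrality of $\top$, and (ii) invoking the uniqueness of the residuated structure so that agreement of the ordered-monoid reducts upgrades to equality of residuated lattices.
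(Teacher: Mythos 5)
Your proposal is correct and follows the paper's own route: the paper proves this corollary simply by combining Theorem~\ref{t:deconstruction} and Theorem~\ref{thmM_n}, which is exactly what you do, with the bookkeeping about mixed products involving $\bot$, $\top$ and the uniqueness of the residuated expansion (Corollary~\ref{c: RLmax}) correctly filled in.
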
 

\section{Axiomatizations}\label{s: axiom}
In this section we will provide axiomatizations for the various classes we will be considering and also discuss their proof theory.

\subsection{Axiomatization of residuated lattices based on \texorpdfstring{$\mathbf{M}_X$'s}{Mx}}

We start by giving an axiomatization for the variety $\mathsf{M}$ generated by all residuated lattices based on $\mathbf{M}_X$, where $X$ is a set; see Corollary~\ref{c: variety M_X}.
Since the lattice $\mathbf{M}_X$ is simple, when $|X| \geq 3$, residuated lattices based on $\mathbf{M}_X$ are also simple; if $|X| \leq 3$ the residuated lattice is simple, as well.
It turns out (Corollary~\ref{c: FSI variety M_X}) that these are precisely the subdirectly irreducible algebras in $\mathsf{M}$ and we will provide an axiomatization for them.

Actually, we can also expand the language of residuated lattices to include constants which then evaluate as bounds.
A \emph{bounded residuated lattice} is an expansion of a residuated lattice that happens to be based on a bounded lattice, by the addition of constants $\bot$ and $\top$, evaluating at these bounds (so $\bot \leq x \leq \top$, for all $x$).
We will consider both cases where the language includes the bounds or not, but opt for the axioms to be expressible without the need for bounds.
We can arrange for the axioms we will be considering to be positive universal sentences, which is convenient for applying the correspondence provided in \cite{galatos2004equational}.

A (bounded) residuated lattice is called \emph{unilinear} if it satisfies: 
\begin{equation}\tag{URL}\label{axiom_URL}
    \forall u_1, u_2, z, w \, \, (u_1 \leq u_2 \text{ or } u_2 \leq u_1 \text{ or } (u_1 \wedge u_2 \leq w \text{ and } z \leq u_1 \vee u_2))
\end{equation}

Note that a residuated lattice is unilinear iff it is linear or else the lattice is actually bounded and every pair of incomparable elements join to the top of the lattice and meet to the bottom of the lattice.
In other words the non-linear residuated lattices consist of two bounds and the rest of the lattice is a disjoint union of totally incomparable chains; see Figure~\ref{unilinear}.
For these non-linear unilinear residuated lattices, we will be denoting these bounds by $\bot$ and $\top$, even when the language does not include constants for the bounds.
We denote by $\mathsf{URL}$ and $\mathsf{bURL}$ the (positive universal) classes of unilinear and bounded unilinear residuated lattices, respectively.
Clearly, (bounded) residuated lattices on an $\mathbf{M}_X$ are unilinear.

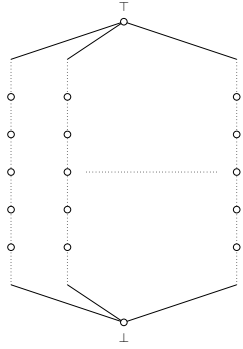
\begin{figure}[ht]
\scalebox{0.5}{
\centering
    \begin{tikzpicture}
        \draw (3, 4) -- (0, 3);
        \draw[dotted] (0, 3) -- (0, -3);
        \draw (0, -3) -- (3, -4);
        \draw (3, 4) -- (1.5, 3);
        \draw[dotted] (1.5, 3) -- (1.5, -3);
        \draw (1.5, -3) -- (3, -4);
        \draw[dotted] (2, 0) -- (5.5, 0);
        \draw (3, 4) -- (6, 3);
        \draw[dotted] (6, 3) -- (6, -3);
        \draw (6, -3) -- (3, -4);

        \filldraw [color = black, fill = white] (3, 4) circle (2.5pt)
            (3, 4.4) node {$\top$};
        \filldraw [color = black, fill = white] (3, -4) circle (2.5pt)
            (3, -4.4) node [black] {$\bot$};
        \filldraw [color = black, fill = white] (0, 2) circle (2.5pt)
            (-0.3, 2.3) node [black] {};
        \filldraw [color = black, fill = white] (0, 1) circle (2.5pt)
            (-0.3, 1.3) node [black] {};
        \filldraw [color = black, fill = white] (0, 0) circle (2.5pt)
            (-0.3, 0.3) node [black] {};
        \filldraw [color = black, fill = white] (0, -1) circle (2.5pt)
            (-0.4, -0.7) node [black] {};
        \filldraw [color = black, fill = white] (0, -2) circle (2.5pt)
            (-0.4, -1.7) node [black] {};

        \filldraw [color = black, fill = white] (1.5, 2) circle (2.5pt);
        \filldraw [color = black, fill = white] (1.5, 1) circle (2.5pt);
        \filldraw [color = black, fill = white] (1.5, 0) circle (2.5pt)
            (1.2, 0.3) node {};
        \filldraw [color = black, fill = white] (1.5, -1) circle (2.5pt);
        \filldraw [color = black, fill = white] (1.5, -2) circle (2.5pt);

        \filldraw [color = black, fill = white] (6, 2) circle (2.5pt);
        \filldraw [color = black, fill = white] (6, 1) circle (2.5pt);
        \filldraw [color = black, fill = white] (6, 0) circle (2.5pt)
            (6.3, 0.3) node [black] {};
        \filldraw [color = black, fill = white] (6, -1) circle (2.5pt);
        \filldraw [color = black, fill = white] (6, -2) circle (2.5pt);
    \end{tikzpicture}
}
\caption{A non-linear unilinear residuated lattice}
\label{unilinear}
\end{figure}

What distinguishes $\mathbf{M}_X$ from other lattices is its height, so we axiomatize unilinear residuated lattices whose height is no greater than a given number.
We are careful to formulate the first-order sentence so it has no implication in it and it remains a positive sentence.

\begin{prop}
    Given a natural number $n$, a (bounded) unilinear residuated lattice has height at most $n$ if and only if it satisfies
        \begin{equation}\tag{$h_n$}\label{axiom_of_finite_height}
            \forall x_1, \dots, x_{n+1} \, (\bigor \limits_{1 \leq m \leq n}
            x_1 \vee \dots \vee x_{m} = x_1 \vee \dots \vee x_{m+1}).
        \end{equation}
    Also, it has width at most $n$ if and only if it satisfies
        \begin{equation}\tag{$w_n$}\label{axiom_of_finite_width}
            \forall x_1, \dots, x_{n+1} \, (\bigor \limits_{1 \leq i \neq j \leq n+1} x_i \leq x_j).
        \end{equation}
\end{prop}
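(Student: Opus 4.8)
The plan is to note that both equivalences are purely order-theoretic facts about the lattice reduct: neither residuation nor unilinearity is actually needed, so I would prove the height statement for an arbitrary lattice and the width statement for an arbitrary poset. The key observation is that $(h_n)$ fails in a lattice exactly when it has a chain with $n+1$ distinct elements, and $(w_n)$ fails in a poset exactly when it has an antichain with $n+1$ distinct elements; here I use the convention (the one under which $\mathbf{M}_X$ has height $3$) that ``height (resp.\ width) at most $n$'' means that every chain (resp.\ antichain) has at most $n$ elements.

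For $(h_n)$, first I would fix $x_1, \dots, x_{n+1}$ and set $c_m := x_1 \vee \dots \vee x_m$, so that $c_1 \le c_2 \le \dots \le c_{n+1}$ is automatically a chain. If the height is at most $n$, these $n+1$ comparable elements cannot all be distinct, so $c_m = c_{m+1}$ for some $1 \le m \le n$, which is precisely the $m$-th disjunct of $(h_n)$. Conversely, given a chain $a_1 < a_2 < \dots < a_{n+1}$, substituting $x_i := a_i$ gives $c_m = x_1 \vee \dots \vee x_m = a_m$ (the maximum of a finite chain), so each disjunct $c_m = c_{m+1}$ becomes the false statement $a_m = a_{m+1}$; hence $(h_n)$ fails whenever a chain with $n+1$ distinct elements exists. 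Combining the two directions: $(h_n)$ holds iff there is no chain with $n+1$ distinct elements iff the height is at most $n$.

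For $(w_n)$ the argument is the same in spirit and slightly easier. If the width is at most $n$, then among any $x_1, \dots, x_{n+1}$ either two coincide (so one is $\le$ the other) or the $n+1$ distinct elements fail to form an antichain, yielding comparable $x_i, x_j$ with $i \ne j$; either way the disjunction in $(w_n)$ is satisfied. Conversely, if $x_1, \dots, x_{n+1}$ witness the failure of $(w_n)$, i.e.\ $x_i \not\le x_j$ for all $i \ne j$, then the $x_i$ are pairwise distinct and pairwise incomparable, hence form an antichain of size $n+1$, so the width is at least $n+1$. Therefore $(w_n)$ holds iff the width is at most $n$.

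I do not expect any genuine obstacle: the content is just the partial-join bookkeeping, and the only points requiring care are (a) fixing the height/width convention so that the reading agrees with the examples (in particular $\mathbf{M}_X$), and (b) observing that it is exactly the use of the cumulative joins $c_m$ that lets the finite-height condition be written as a positive, implication-free first-order sentence, as flagged just before the statement. The bounded/unbounded distinction is immaterial, since neither $(h_n)$ nor $(w_n)$ mentions the constants $\bot, \top$.
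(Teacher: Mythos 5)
Your proposal is correct and follows essentially the same route as the paper: both arguments rest on the observation that the cumulative joins $x_1 \vee \dots \vee x_m$ always form a (weakly increasing) chain and that any chain $a_1 < \dots < a_{n+1}$ arises this way under the substitution $x_i := a_i$, so $(h_n)$ holds exactly when no chain has $n+1$ distinct elements; the width case is handled by the same direct antichain observation in both. Your write-up is just slightly more explicit about the two directions (and about the convention that height counts elements of a chain), which the paper leaves terse.
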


\begin{proof}
Having height at most $n$ is equivalent to saying that every subchain has at most $n$ elements.
Now, every subchain always has the form $a_1 \leq a_1 \vee a_2 \leq a_1 \vee a_2 \vee a_3 \leq \dots \leq a_1 \vee \dots \vee a_{k}$, where $a_1, \ldots, a_k$ are elements of the lattice and where the number of the inequalities that are equalities determines the number of elements in the chain.
So, having height at most $n$ is equivalent to stipulating that in every chain $a_1, a_1 \vee a_2, \ldots , a_1 \vee \dots \vee a_{n+1}$, at least two adjacent elements are equal.

Having width at most $n$ is equivalent to having at most $n$ pairwise incomparable elements.
\end{proof}

We denote by $\mathsf{URL}_n$ the subclass of $\mathsf{URL}$ axiomatized by (\ref{axiom_of_finite_height}).
In particular, $(h_3)$ is the universal closure (which we often suppress) of 
$$x_1 = x_1 \vee x_2 \text{ or } x_1 \vee x_2 = x_1 \vee x_2 \vee x_3 \text{ or } x_1 \vee x_2 \vee x_3 = x_1 \vee x_2 \vee x_3 \vee x_4.$$

\begin{corollary}
The (bounded) residuated lattices that are based on $\mathbf{M}_X$, for some $X$, together with the trivial algebra, are precisely the ones in the class $\mathsf{URL}_3$ ($\mathsf{bURL}_3$).
\end{corollary}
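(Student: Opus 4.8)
The plan is to prove the two inclusions separately, using the description of unilinear residuated lattices recorded just after the axiom \eqref{axiom_URL} together with the height axiom for $n=3$. For one inclusion, every residuated lattice based on some $\mathbf{M}_X$ is unilinear, as already observed, and it satisfies $(h_3)$: every chain of $\mathbf{M}_X$ is contained in one of the form $\bot\le x\le\top$ with $x\in X$ (or in $\{\bot,\top\}$ itself), hence has at most three elements, so the height is at most $3$. The trivial algebra belongs to $\mathsf{URL}_3$ (and to $\mathsf{bURL}_3$) vacuously. Thus every algebra listed on the left-hand side lies in $\mathsf{URL}_3$, and likewise in the bounded case.

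For the converse, let $\mathbf{R}$ be a non-trivial member of $\mathsf{URL}_3$. If $\mathbf{R}$ is linear, then, being a chain of height at most $3$ with more than one element, it has exactly two or three elements; a two-element chain is based on $\mathbf{M}_\emptyset$ and a three-element chain $\bot<a<\top$ is based on $\mathbf{M}_{\{a\}}$, because in both cases the equations $x\vee y=\top$ and $x\wedge y=\bot$ defining $\mathbf{M}_X$ hold vacuously. If $\mathbf{R}$ is not linear, then by the quoted description its lattice reduct is bounded, with bounds written $\bot,\top$, and any two incomparable elements join to $\top$ and meet to $\bot$. Put $X:=R\setminus\{\bot,\top\}$. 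Were there $a,b\in X$ with $a<b$, the chain $\bot<a<b<\top$ would have four elements, contradicting $(h_3)$; hence $X$ is an antichain, so any two distinct elements of $X$ are incomparable and therefore join to $\top$ and meet to $\bot$. This is precisely the defining property of the lattice $\mathbf{M}_X$, so $\mathbf{R}$ is based on $\mathbf{M}_X$; and in the bounded setting the two constants are, by definition of a bounded residuated lattice, interpreted at the bounds, so $\mathbf{R}$ is a bounded residuated lattice based on $\mathbf{M}_X$.

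The whole argument is routine bookkeeping, so no step is a genuine obstacle; the only points needing a little attention are the degenerate short chains, where one must notice that the join/meet equations defining $\mathbf{M}_X$ impose nothing when $|X|\le 1$, and the painless passage between the statements for $\mathsf{URL}_3$ and $\mathsf{bURL}_3$, which works because unilinearity already forces a non-linear lattice to be bounded.
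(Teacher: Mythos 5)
Your proof is correct and follows exactly the route the paper intends: the corollary is stated without proof as an immediate consequence of the note describing non-linear unilinear lattices and the proposition characterizing height at most $n$ via $(h_n)$, which is precisely the bookkeeping you carry out (including the degenerate cases $|X|\le 1$ and the bounded variant).
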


\subsection{Equational basis for \texorpdfstring{$\mathsf{M}$}{M}}

The class $\mathsf{URL}_3$ is axiomatized by positive universal sentences.
We note that \cite{galatos2004equational} provides a general method for axiomatizing the variety of residuated lattices generated by a positive universal class.
In detail, if $$1 \leq p_1 \text{ or } \cdots \text{ or } 1 \leq p_n$$ is a positive universal formula, then the variety generated by the residuated lattices satisfying the universal closure of the formula is axiomatized by the infinitely many equations 
$$1 = \gamma_1(p_1) \vee \cdots \vee \gamma_n (p_n)$$
where $\gamma_1, \ldots,\gamma_n \in \Gamma(Var)$, the set of all iterated conjugates.
The \emph{left conjugate} of $a$ by $x$ is the term $x \ld ax \mt 1$ and the \emph{right conjugate} is $xa \rd x \mt 1$; \emph{iterated conjugates} are obtained by repeated applications of left and right conjugates by various conjugating elements from the set $Var$ of variables.
If $\phi$ is a set of positive universal formulas, we denote by $\mathsf{V}_\phi$ the variety axiomatized by the set $\Gamma_\phi$ of all the equations corresponding to the positive universal formulas in $\phi$. 

We consider the variety $\mathsf{SRL}$ generated by the class $\mathsf{URL}$ and we call its elements \emph{semiunilinear}.
Since $\mathsf{URL}$ is axiomatized by
\begin{align*}
    x \leq y \text{ or } y \leq x \text{ or } (x \wedge y \leq z \text{ and } w \leq x \vee y ),
\end{align*}
which can be written as the conjunction of the two sentences
\begin{align*}
    x \leq y \text{ or } y \leq x \text{ or } x \wedge y \leq z, \qquad
        x \leq y \text{ or } y \leq x \text{ or } w \leq x \vee y,
\end{align*}
and, in turn, as
\begin{align*}
    1 \leq x \backslash y \text{ or } 1 \leq y \backslash x \text{ or } 1 \leq (x \wedge y) \backslash z, \;\;
    1 \leq x \backslash y \text{ or } 1 \leq y \backslash x \text{ or }  1 \leq w \backslash (x \vee y),
\end{align*}
we get the following result.

\begin{corollary}\label{c: SRLax}
The variety $\mathsf{SRL}$ of semiunilinear residuated lattices is axiomatized by the infinitely many equations 
$$1 = \gamma_1(x \backslash y) \vee \gamma_2(y \backslash x) \vee \gamma_3 ((x \wedge y) \backslash z) \qquad 1 = \gamma_4(x \backslash y) \vee \gamma_5(y \backslash x) \vee \gamma_6 (w \backslash (x \vee y)),$$
where $ \gamma_1, \gamma_2, \gamma_3, \gamma_4, \gamma_5, \gamma_6 \in \Gamma(Var)$. 
\end{corollary}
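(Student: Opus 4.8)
The plan is to invoke the general correspondence from~\cite{galatos2004equational} that is recalled in the paragraph immediately preceding the statement. The first step is to observe that the defining sentence~(\ref{axiom_URL}) of $\mathsf{URL}$, namely
\[
\forall u_1,u_2,z,w\;(u_1\le u_2 \text{ or } u_2\le u_1 \text{ or } (u_1\wedge u_2\le w \text{ and } z\le u_1\vee u_2)),
\]
is logically equivalent to the conjunction of two positive universal sentences,
\[
u_1\le u_2 \text{ or } u_2\le u_1 \text{ or } u_1\wedge u_2\le w,
\qquad
u_1\le u_2 \text{ or } u_2\le u_1 \text{ or } z\le u_1\vee u_2,
\]
since a disjunction with a conjunction in one disjunct distributes over that conjunction. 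This is exactly the splitting displayed in the text (with $x,y$ in place of $u_1,u_2$).

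The second step is to rewrite each of the three inequalities in each disjunct in the form $1\le p$ for a residuated-lattice term $p$, using the residuation law: $a\le b$ iff $1\le a\backslash b$. Thus $x\le y$ becomes $1\le x\backslash y$, $y\le x$ becomes $1\le y\backslash x$, $x\wedge y\le z$ becomes $1\le (x\wedge y)\backslash z$, and $w\le x\vee y$ becomes $1\le w\backslash(x\vee y)$. This puts both sentences into the precise syntactic shape ``$1\le p_1$ or $\cdots$ or $1\le p_n$'' required by the cited method.

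The third and final step is to apply the theorem of~\cite{galatos2004equational} verbatim: the variety generated by the residuated lattices satisfying the universal closure of $1\le p_1$ or $\cdots$ or $1\le p_n$ is axiomatized by all equations $1=\gamma_1(p_1)\vee\cdots\vee\gamma_n(p_n)$ with $\gamma_1,\dots,\gamma_n\in\Gamma(\mathit{Var})$. Since $\mathsf{SRL}$ is by definition the variety generated by $\mathsf{URL}$, and $\mathsf{URL}$ is the intersection of the two positive universal classes above, $\mathsf{SRL}$ is axiomatized by the union of the two corresponding equation schemes, which is precisely the displayed pair of equation families in the statement. There is essentially no obstacle here beyond bookkeeping: the only point requiring a moment's care is the logical equivalence in the first step (distributing ``or'' over the inner ``and''), and then checking that the quantifier-free disjunctions are genuinely in the ``$1\le$ term'' normal form so that the black-box result applies directly. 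The conclusion then follows.
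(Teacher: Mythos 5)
Your proposal is correct and follows essentially the same route as the paper: split the unilinearity axiom into two positive universal sentences, rewrite each inequality as $1 \leq p$ via residuation, and apply the axiomatization method of \cite{galatos2004equational} to each, taking the union of the resulting equation schemes.
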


\begin{corollary}\label{c: variety M_X}
The variety $\mathsf{M}$ generated by the class $\mathsf{URL}_3$, of residuated lattices on an $\mathbf{M}_X$, is axiomatized relative to $\mathsf{SRL}$ by : $1=$
$$\gamma_1((x_1 \vee x_2) \backslash x_1) \vee
\gamma_2((x_1 \vee x_2 \vee x_3) \backslash (x_1 \vee x_2)) \vee
\gamma_3((x_1 \vee x_2 \vee x_3 \vee x_4) \backslash (x_1 \vee x_2 \vee x_3))$$
where $\gamma_1, \gamma_2, \gamma_3 \in \Gamma(Var)$. 
\end{corollary}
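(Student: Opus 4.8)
The plan is to read this corollary off from the general correspondence of \cite{galatos2004equational}, applied exactly as in the derivation of Corollary~\ref{c: SRLax} but with the additional positive universal sentence $(h_3)$. By definition $\mathsf{URL}_3$ is the subclass of $\mathsf{URL}$ cut out by $(h_3)$, so the class of residuated lattices satisfying $\mathsf{URL}\cup\{(h_3)\}$ is precisely $\mathsf{URL}_3$, and $\mathsf{M}$ is the variety it generates (the trivial algebra, which also lies in $\mathsf{URL}_3$, being irrelevant for the generated variety).

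First I would put $(h_3)$ into the syntactic shape required by \cite{galatos2004equational}, a disjunction of inequalities of the form $1\leq p$. Since the chain $x_1\leq x_1\vee x_2\leq x_1\vee x_2\vee x_3\leq x_1\vee x_2\vee x_3\vee x_4$ holds in every lattice, each of the three equalities in $(h_3)$ is equivalent to its reverse inequality; and in a residuated lattice $a\leq b$ iff $1\leq a\backslash b$ (take $y=1$ in the residuation law, using $a\cdot 1=a$). Hence $(h_3)$ is equivalent to
\[
1\leq (x_1\vee x_2)\backslash x_1 \ \text{ or }\ 1\leq (x_1\vee x_2\vee x_3)\backslash(x_1\vee x_2) \ \text{ or }\ 1\leq (x_1\vee x_2\vee x_3\vee x_4)\backslash(x_1\vee x_2\vee x_3),
\]
a positive universal sentence of exactly the form handled in \cite{galatos2004equational}.

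Next I would apply \cite{galatos2004equational} to $\phi=\mathsf{URL}\cup\{(h_3)\}$, just as was done for $\mathsf{URL}$ alone: the variety $\mathsf{V}_\phi$ axiomatized by $\Gamma_\phi$ is the variety generated by the residuated lattices satisfying $\phi$, that is, $\mathsf{M}$. Now $\Gamma_\phi=\Gamma_{\mathsf{URL}}\cup\Gamma_{(h_3)}$, where $\Gamma_{\mathsf{URL}}$ axiomatizes $\mathsf{SRL}$ (Corollary~\ref{c: SRLax}) and $\Gamma_{(h_3)}$ consists exactly of the equations $1=\gamma_1(p_1)\vee\gamma_2(p_2)\vee\gamma_3(p_3)$ with $p_1,p_2,p_3$ the three division terms displayed above and $\gamma_1,\gamma_2,\gamma_3\in\Gamma(Var)$. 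Therefore $\mathsf{M}$ is axiomatized relative to $\mathsf{SRL}$ by precisely those equations, which is the claim.

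The one point needing a little care is that the correspondence of \cite{galatos2004equational} must be used in its version for a whole set of positive universal formulas, so that $\mathsf{V}_\phi$ genuinely equals the variety generated by the residuated lattices satisfying $\phi$ (rather than merely containing, or being contained in, it); but this is exactly the usage already made for $\mathsf{SRL}$, since $\mathsf{URL}$ is itself a conjunction of two such formulas. Everything else — the lattice-theoretic simplification of $(h_3)$ and the identification of the models of $\mathsf{URL}\cup\{(h_3)\}$ with $\mathsf{URL}_3$ — is immediate from the definitions and results established above.
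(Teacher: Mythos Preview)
Your proposal is correct and follows exactly the approach implicit in the paper: the corollary is stated without proof because it is meant to be read off directly from the general method of \cite{galatos2004equational}, applied to the additional positive universal sentence $(h_3)$ after rewriting its three equalities as inequalities of the form $1\leq p$. Your justification of the rewriting and of the relativization to $\mathsf{SRL}$ is precisely what the paper leaves to the reader.
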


We denote by $\mathsf{bM}$ the corresponding variety of bounded residuated lattices.
Also, we can characterize the finitely subdirectly irreducible algebras in these varieties.

\begin{theorem}\label{semiunilinear_FSI}
    The finitely subdirectly irreducible (FSI) semiunilinear residuated lattices are precisely the unilinear residuated lattices: $\mathsf{SRL}_{FSI} = \mathsf{URL}$.
    More generally, if $\phi$ is a set of positive universal sentences, then the FSIs in  $\mathsf{SRL} \cap \mathsf{V}_\phi$ are precisely the unilinear residuated lattices that satisfy $\phi$.
\end{theorem}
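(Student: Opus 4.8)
I would base the argument on the (folklore) characterization of finite subdirect irreducibility for residuated lattices: $\mathbf{R}$ is FSI if and only if its identity $1$ is join-irreducible in the lattice reduct, i.e.\ $a\vee b=1$ implies $a=1$ or $b=1$; iterating, whenever $1$ equals a finite join of elements all below $1$, one of them already equals $1$. I would recall the mechanism: if $1=a\vee b$ with $a,b<1$, then the principal congruences collapsing $a$ to $1$ and collapsing $b$ to $1$ are nontrivial but one checks they meet in the identity congruence; conversely, if $1$ is join-irreducible and $\theta_1,\theta_2$ are nontrivial congruences, choosing $a_i<1$ collapsed to $1$ by $\theta_i$ gives $a_1\vee a_2<1$, an element collapsed to $1$ by $\theta_1\cap\theta_2$.

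Granting this, $\mathsf{SRL}_{FSI}\subseteq\mathsf{URL}$ follows from Corollary~\ref{c: SRLax}. An FSI algebra $\mathbf{R}\in\mathsf{SRL}$ satisfies $1=\gamma_1(x\backslash y)\vee\gamma_2(y\backslash x)\vee\gamma_3((x\wedge y)\backslash z)$ and its companion with $\gamma_6(w\backslash(x\vee y))$ for all iterated conjugates; every joinand is $\le 1$, so by join-irreducibility of $1$ one of them equals $1$ in each instance. Specializing all $\gamma_i$ to the basic conjugate $t\mapsto t\wedge 1$ (left conjugation by $1$) turns these into ``$x\le y$ or $y\le x$ or $x\wedge y\le z$'' and ``$x\le y$ or $y\le x$ or $w\le x\vee y$'', whose conjunction is exactly (URL); hence $\mathbf{R}\in\mathsf{URL}$. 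The refinement is the same argument: by \cite{galatos2004equational}, $\mathsf{V}_\phi$ is axiomatized by the equations $1=\gamma_1(p_1)\vee\dots\vee\gamma_n(p_n)$ coming from the disjunctions of $\phi$, and the identical specialization of the conjugates shows every FSI member of $\mathsf{SRL}\cap\mathsf{V}_\phi$ satisfies $\phi$.

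For the reverse inclusion (and its refinement): a unilinear residuated lattice lies in $\mathsf{SRL}$, and one satisfying $\phi$ also lies in $\mathsf{V}_\phi$, since $1\le p_i$ forces $\gamma_i(p_i)=1$ and hence $\phi$ entails each of its associated $\Gamma$-equations. So it suffices to check that $1$ is join-irreducible in any unilinear residuated lattice. For a chain this is clear. For a non-linear one, the structural description shows incomparable elements join to $\top$; hence $1=a\vee b$ forces $a,b$ comparable, so $1=\max\{a,b\}\in\{a,b\}$, the one exception being $1=a\vee b=\top$ with $a,b$ incomparable. This last, degenerate configuration --- $1$ sitting at the top of the lattice, the four-element Boolean algebra being the smallest instance --- is the point requiring separate attention, and is where the plan is most delicate; apart from it, $1$ is join-irreducible, so the algebra is FSI, and combining with the previous paragraph gives $\mathsf{SRL}_{FSI}=\mathsf{URL}$ together with the $\phi$-refinement.

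The steps I expect to be the real obstacles are precisely these two: proving the FSI characterization in sufficient generality --- the delicate half being that non-join-irreducibility of $1$ genuinely yields two nontrivial congruences whose intersection is the identity, which needs care beyond the integral case --- and correctly handling the $1=\top$ configuration above. Everything else is a mechanical unwinding of the $\mathsf{SRL}$-axioms and of the positive-universal-to-equational translation of \cite{galatos2004equational}.
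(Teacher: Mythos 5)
Your converse direction (unilinear $\Rightarrow$ FSI) and the easy half of your criterion ($1$ join-irreducible $\Rightarrow$ FSI) match the paper in spirit: the paper argues that the negative cone of a unilinear algebra is a chain, so convex normal submonoids are nested and the trivial congruence is meet-irreducible, which is your $a_1\vee a_2<1$ argument in different clothing. The genuine gap is in the other half of your ``folklore characterization''. FSI does \emph{not} imply that $1$ is join-irreducible in an arbitrary residuated lattice, and this is exactly the step you yourself call delicate (``one checks they meet in the identity congruence''): to get $\mathrm{CgN}(a)\cap\mathrm{CgN}(b)=\Delta$ from $a\vee b=1$ you would need $\gamma(a)\vee\delta(b)=1$ for \emph{all} iterated conjugates $\gamma,\delta$, which does not follow from the single instance $a\vee b=1$ outside special settings such as commutativity. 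Concretely, $\ell$-groups are residuated lattices with the same congruences; in any $\ell$-group that is not totally ordered the identity is join-reducible (for incomparable $u,v$, the elements $u(u\vee v)^{-1}$ and $v(u\vee v)^{-1}$ are strictly below $1$ and join to $1$), so if FSI forced join-irreducibility of $1$, every $\ell$-group would be a subdirect product of totally ordered ones, contradicting the classical existence of non-representable $\ell$-groups. Hence there are FSI (even simple) residuated lattices with $1$ join-reducible, and your move ``every joinand is $\le 1$, so by join-irreducibility one equals $1$'' is unjustified when all you know is FSI. The paper's forward direction instead cites the transfer theorem (Theorem 9.73(2) of \cite{galatos2004equational}), whose proof uses FSI only through nontrivial intersection of the convex normal submonoids generated by the failed disjuncts and needs the \emph{whole family} of conjugate instances of the equations (together with the product-of-conjugates lemma), not just the specialization $\gamma=(\cdot)\wedge 1$. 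To repair your plan you would have to reproduce that argument rather than the join-irreducibility shortcut.

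On the second point you flag: the configuration $1=\top$ is not merely delicate, it is a genuine exception that your plan cannot absorb. The four-element Boolean algebra satisfies \textup{(URL)} (its unique incomparable pair meets to $\bot$ and joins to $\top$), yet as a residuated lattice it is the direct square of the two-element Boolean algebra and so is not FSI; in particular ``$1$ is join-irreducible in every unilinear residuated lattice'' is false, and the converse inclusion only goes through when $1\neq\top$ (equivalently, when the algebra is linear or non-integral). The paper's own proof implicitly makes this assumption when it asserts that the negative cone of a unilinear algebra is a chain, which fails precisely when $1=\top$. So your instinct to isolate this case was correct, but the right resolution is to exclude it (or restrict to $1$ lying in the middle layer, as the paper does elsewhere), not to hope the join-irreducibility argument can be salvaged there.
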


\begin{proof}
It follows from the proof of Theorem 9.73(2) of \cite{galatos2004equational} that an FSI algebra satisfies the unilinearity condition iff it satiefies the equations of Corollary~\ref{c: SRLax}, i.e., iff it is semiunilinear.
So, the semiunilinear FSIs are actually unilinear. 

Conversely, if an algebra is unilinear, then its negative cone ${\downarrow} 1$ is a chain.
Therefore, the convex normal submonoids of the negative cone are nested and $\{1\}$ cannot be the intersection of two non-trivial convex normal submonoids; see \cite{galatos2007residuated} for the correspondence between congruences and convex normal submonoids of the negative cone of residuated lattices.
Therefore, the trivial congruence is meet-irreducible and the algebra is FSI (and semiunilinear, as it is unilinear).
\end{proof}

\begin{corollary}
Every semiunilinear residuated lattice is a subdirect product of unilinear ones. 
\end{corollary}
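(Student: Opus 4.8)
The plan is to derive this immediately from Birkhoff's subdirect representation theorem together with Theorem~\ref{semiunilinear_FSI}. First I would recall that every algebra $\mathbf{A}$ is isomorphic to a subdirect product of subdirectly irreducible algebras, each of which is a homomorphic image of $\mathbf{A}$ (the factors being the quotients $\mathbf{A}/\theta$ for $\theta$ ranging over the completely meet-irreducible congruences, whose intersection is the trivial congruence). Since $\mathsf{SRL}$ is a variety, it is closed under homomorphic images, so if $\mathbf{A}\in\mathsf{SRL}$ then each subdirect factor again lies in $\mathsf{SRL}$.

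Next I would observe that a subdirectly irreducible algebra is in particular finitely subdirectly irreducible: its trivial congruence is completely meet-irreducible in the congruence lattice, hence a fortiori meet-irreducible. Thus each subdirect factor of $\mathbf{A}$ is a finitely subdirectly irreducible member of $\mathsf{SRL}$, and Theorem~\ref{semiunilinear_FSI} then identifies it as a unilinear residuated lattice. Putting the two observations together shows that $\mathbf{A}$ is a subdirect product of unilinear residuated lattices, which is exactly the assertion.

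I do not expect a genuine obstacle here; the corollary is essentially a repackaging of Theorem~\ref{semiunilinear_FSI} through the subdirect representation theorem. The only point that needs a word of justification is that the subdirect factors remain inside $\mathsf{SRL}$, and that follows at once from the closure of a variety under homomorphic images. (If one wanted the sharper statement relativized to a positive universal class $\phi$, the same argument applied to $\mathsf{SRL}\cap\mathsf{V}_\phi$, together with the more general form of Theorem~\ref{semiunilinear_FSI}, would give that every member of $\mathsf{SRL}\cap\mathsf{V}_\phi$ is a subdirect product of unilinear residuated lattices satisfying $\phi$.)
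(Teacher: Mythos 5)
Your argument is correct and is exactly the reasoning the paper intends (the corollary is stated without proof as an immediate consequence of Theorem~\ref{semiunilinear_FSI}): Birkhoff's subdirect representation gives factors that are subdirectly irreducible, hence FSI, and lie in $\mathsf{SRL}$ since varieties are closed under homomorphic images, so the theorem identifies them as unilinear. Nothing further is needed.
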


\begin{corollary}\label{c: FSI variety M_X}
The subdirectly irreducibles in $\mathsf{M}$ are the same as the finitely subdirectly irreducible in $\mathsf{M}$ and as the simple ones in $\mathsf{M}$ and they are precisely the non-trivial residuated lattices based on $\mathbf{M}_X$, for some $X$.
The same holds for $\mathsf{bM}$.
\end{corollary}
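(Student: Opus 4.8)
The plan is to read off the FSI algebras of $\mathsf{M}$ from Theorem~\ref{semiunilinear_FSI}, observe that the algebras so obtained are already simple, and then squeeze the three notions ``simple'', ``subdirectly irreducible'' and ``finitely subdirectly irreducible'' between one another. Throughout, write $\mathcal{K}$ for the class of non-trivial residuated lattices based on some $\mathbf{M}_X$.

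First I would identify the FSI members of $\mathsf{M}$. By Corollary~\ref{c: variety M_X}, $\mathsf{M} = \mathsf{SRL} \cap \mathsf{V}_\phi$ for the singleton $\phi = \{(h_3)\}$ of positive universal sentences, so the ``more generally'' clause of Theorem~\ref{semiunilinear_FSI} applies and says that the FSIs of $\mathsf{M}$ are exactly the unilinear residuated lattices satisfying $(h_3)$, i.e. the non-trivial members of $\mathsf{URL}_3$ (an FSI algebra being non-trivial by convention). By the corollary characterising $\mathsf{URL}_3$, the non-trivial members of $\mathsf{URL}_3$ are precisely the residuated lattices based on some $\mathbf{M}_X$; that is, $\mathsf{M}_{FSI} = \mathcal{K}$.

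Next I would run the squeeze. Every simple algebra is subdirectly irreducible and every subdirectly irreducible algebra is finitely subdirectly irreducible, so the simple members of $\mathsf{M}$ are contained in $\mathsf{M}_{SI}$, which is contained in $\mathsf{M}_{FSI}$. Conversely, each member of $\mathcal{K}$ is simple: for $|X| \geq 3$ this is because the lattice $\mathbf{M}_X$ is simple, while the finitely many algebras with $|X| \leq 3$ are checked directly, as recorded in the text preceding Corollary~\ref{c: variety M_X}; moreover $\mathcal{K} \subseteq \mathsf{URL}_3 \subseteq \mathsf{M}$. Combining this with the previous paragraph,
\[
\mathcal{K} \;\subseteq\; \{\text{simple members of } \mathsf{M}\} \;\subseteq\; \mathsf{M}_{SI} \;\subseteq\; \mathsf{M}_{FSI} \;=\; \mathcal{K},
\]
so all four classes coincide, which is the assertion for $\mathsf{M}$.

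For $\mathsf{bM}$ I would note that expanding the language by the two constants $\bot$ and $\top$ does not alter the congruence lattice (a nullary operation imposes no compatibility requirement), so a bounded residuated lattice is simple, SI, or FSI iff its residuated-lattice reduct is. Since forming reducts carries $\mathsf{bM}$ into $\mathsf{M}$ and carries $\mathsf{bURL}_3$ onto $\mathsf{URL}_3$, and since every residuated lattice based on an $\mathbf{M}_X$ has a unique bounded expansion (lying in $\mathsf{bURL}_3$), the same three-step argument reproduces the statement for $\mathsf{bM}$. I do not foresee a real obstacle here: the only points needing care are checking that $\mathsf{M}$ is genuinely presented in the form $\mathsf{SRL} \cap \mathsf{V}_\phi$ demanded by Theorem~\ref{semiunilinear_FSI}, and the direct simplicity verification for the finitely many small algebras with $|X| \leq 3$ — both already dealt with earlier in the paper.
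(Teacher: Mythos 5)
Your proposal is correct and follows essentially the same route as the paper: it reads off $\mathsf{M}_{FSI}$ from the ``more generally'' clause of Theorem~\ref{semiunilinear_FSI} applied to $\mathsf{M}=\mathsf{SRL}\cap\mathsf{V}_{(h_3)}$, invokes the simplicity of residuated lattices based on $\mathbf{M}_X$ (recorded at the start of Section~\ref{s: axiom}, and justified after the corollary via the two-element negative cone), and closes the chain simple $\subseteq$ SI $\subseteq$ FSI. Your explicit remark that adding the constants $\bot,\top$ leaves the congruence lattice unchanged is exactly the (implicit) reason the paper can assert ``the same holds for $\mathsf{bM}$.''
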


That every subdirectly irreducible in each of the varieties $\mathsf{M}$ and $\mathsf{bM}$ is actually simple follows from the fact that its negative cone has two elements.
Consequently, these varieties are semisimple.
For $\mathsf{bM}$ we can say a bit more.

We define the following terms
$$r(x)=(1 \jn x)(1 \mt x) \mt (1 \jn 1 \rd x)(1 \mt 1 \rd x) \qquad x \lra y= x \ld y \mt y \ld x \mt 1$$ $$t(x,y,z)=r(x \lra y)\cdot z \jn (r(x \lra y) \ld \bot \mt 1 ) \cdot x$$

\begin{lemma}
$\mathsf{bM}$ is a discriminator variety with discriminator term $t$.
\end{lemma}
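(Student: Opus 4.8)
The plan is to exploit the description of the subdirectly irreducibles of $\mathsf{bM}$ and reduce everything to a short computation. Recall that a variety is a discriminator variety with discriminator term $t$ as soon as $t$ realizes the ternary discriminator on a class that generates it, and that every variety is generated by its subdirectly irreducible members. By Corollary~\ref{c: FSI variety M_X} the subdirectly irreducibles of $\mathsf{bM}$ are precisely the nontrivial bounded residuated lattices $\mathbf{R}$ based on some $\mathbf{M}_X$. So it suffices to fix such an $\mathbf{R}$ together with $a,b,c\in R$ and verify $t(a,b,c)=c$ when $a=b$ and $t(a,b,c)=a$ when $a\neq b$.

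The key observation is that ${\downarrow}1=\{\bot,1\}$ in $\mathbf{R}$, since in $\mathbf{M}_X$ the only elements below $1$ are $\bot$ and $1$. As $a\lra b=a\ld b\mt b\ld a\mt 1\le 1$, and $1\le a\ld b$ holds iff $a\le b$, we get $a\lra b=1$ if $a=b$ and $a\lra b=\bot$ otherwise; thus on a subdirectly irreducible the biresiduum is forced into the two-element negative cone. I then evaluate $r$ at these two values: using $1\rd 1=1$ one gets $r(1)=1\cdot 1\mt 1\cdot 1=1$, and using $1\rd\bot=\top$ (Remark~\ref{bottopdivison}) together with $\top\cdot\bot=\bot$ and $\top\cdot 1=\top$ one gets $r(\bot)=(1\cdot\bot)\mt(\top\cdot 1)=\bot\mt\top=\bot$. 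Hence $r(a\lra b)=1$ when $a=b$ and $r(a\lra b)=\bot$ when $a\neq b$. (In fact $r$ sends $1$ to $1$ and every other element of any such $\mathbf{R}$ to $\bot$, but only these two instances are needed here.)

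The final step is to substitute. If $a=b$, then $r(a\lra b)=1$, so $r(a\lra b)\cdot c=c$, while $r(a\lra b)\ld\bot=1\ld\bot=\bot$, so $(r(a\lra b)\ld\bot\mt 1)\cdot a=(\bot\mt 1)\cdot a=\bot\cdot a=\bot$, giving $t(a,b,c)=c\jn\bot=c$. If $a\neq b$, then $r(a\lra b)=\bot$, so $r(a\lra b)\cdot c=\bot$, while $r(a\lra b)\ld\bot=\bot\ld\bot=\top$ (Remark~\ref{bottopdivison}), so $(\top\mt 1)\cdot a=1\cdot a=a$, giving $t(a,b,c)=\bot\jn a=a$. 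This establishes the discriminator identities on every subdirectly irreducible, hence $t$ is a discriminator term for $\mathsf{bM}$.

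I do not expect a genuine obstacle: the only real content is the remark that on a subdirectly irreducible member $a\lra b$ lands in $\{\bot,1\}$, after which $r$ is needed only at $1$ and $\bot$ and the rest is the elementary arithmetic of Remark~\ref{bottopdivison}. The single point deserving a line of care is the handling of the degenerate subdirectly irreducibles where $1=\top$ (for instance the two-element Boolean algebra): there ${\downarrow}1=\{\bot,1\}$ still holds, so the same computations go through verbatim.
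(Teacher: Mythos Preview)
Your proof is correct and follows essentially the same approach as the paper's: reduce to subdirectly irreducibles via Corollary~\ref{c: FSI variety M_X}, show that $a\lra b\in\{1,\bot\}$ there, compute $r$ at those values, and substitute. The only difference is cosmetic ordering: the paper first establishes $r(x)=1$ iff $x=1$ for \emph{all} $x$ and then computes $a\lra b$, whereas you compute $a\lra b$ first and then evaluate $r$ only at $1$ and $\bot$, which is a harmless streamlining you already note parenthetically.
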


\begin{proof}
If $\m R \in \mathsf{bM}_{SI}$ then, by Corollary~\ref{c: FSI variety M_X}, $\m R$ is a non-trivial bounded residuated lattice based on $\m M_X$ for some $X$.
Note that if $x$ is incomparable to $1$, then also $1 \rd x$ is incomparable to $1$ or is equal to $\bot$, so $1 \mt x = 1 \mt 1 \rd x = \bot$, hence $r(x) = \bot$.
Also, if $x \in \{\bot, \top\}$, then $\{x, 1 \rd x\}= \{\bot, \top\}$, so $1 \mt x = \bot$ or $1 \mt 1 \rd x = \bot$, hence $r(x)=\bot$.
Finally, since $1 \rd 1 = 1$, we have $r(x) = 1$, if $x = 1$ and $r(x) = \bot$ otherwise.

Note that for all $x, y \in R$, we have $x \lra y \leq 1$, i.e., $x \lra y \in \{\bot, 1\}$.
Moreover, $x \lra y = 1$ iff $1 = x \ld y \mt y \ld x \mt 1$ iff $1 \leq  x \ld y \mt y \ld x$ iff ($1 \leq x \ld y$ and $1 \leq \mt y \ld x$) iff ($x \leq y$ and $y \leq x$) iff $x=y$.
Thus we have $x \lra y = 1$ if $x = y$ and $x \lra y = \bot$ if $x \neq y$.

Therefore, $t(x,y,z) = r(1) \cdot z \jn (r(1) \ld \bot \mt 1) \cdot x = 1 \cdot z \jn (1 \ld \bot \mt 1) \cdot x = z \jn \bot \cdot x = z$, if $x=y$; and $t(x,y,z) = r(\bot) \cdot z \jn (r(\bot) \ld \bot \mt 1) \cdot x = \bot \cdot z \jn (\bot \ld \bot \mt 1) \cdot x = (\top \mt 1) \cdot x = 1 \cdot x = x$, if $x \not = y$.
\end{proof}

\subsection{Including (or not) the bounds in the signature}

Note that when axiomatizing classes of unilinear residuated lattices for which the non-linear members are asked to satisfy a certain positive universal sentence, oftentimes the axiomatization looks nicer in the case where the language includes constants for the bounds.
For example, the class of URLs whose non-linear members satisfy $\top x = x\top$ is axiomatized by the positive universal formula
$$u \leq v \text{ or } v \leq v \text{ or }  x (u \jn v)=(u \jn v)x.$$
For non-linear bURL's this formula is equivalent to
$$x \top =\top x.$$

For the sake of readability, we will allow ourselves to denote the first of these sentences as the more pleasing to the eye:
$$x \overline{\top} = \overline{\top} x.$$
We call a (bounded) unilinear residuated lattice \emph{$\top$-central}, if it satisfies this formula.

More generally, if $\Phi$ is the sentence $\forall \vec{x} \, (\varphi(\vec{x}, \top, \bot))$, where $\varphi$ is in the language of URL's, we denote by $\overline{\Phi}$ the sentence
$$\forall \vec{x} \, (\varphi(\vec{x}, \overline{\top}, \overline{\bot})):= \forall u, \forall v, \forall \vec{x} \, (u \leq v \text{ or } v \leq u \text{ or } \varphi(\vec{x}, u \jn v, u \mt v))$$
where $u, v$ are fresh variables.

Likewise, we call a (bounded) unilinear residuated lattice \emph{$\top$-unital}, if it satisfies the formula
$$x = \overline{\bot} \text{ or } x \overline{\top} = \overline{\top} = \overline{\top} x,$$
since in the non-linear models every non-bottom element acts as a unit for the top.
Note that for non-linear bURLs being $\top$-unital is the same as being rigorously compact. 

\begin{lemma}\label{l: Phi}
Let $\varphi$ be a positive universal formula in the language of URLs, let $\Phi$ be $\forall \vec{x} \, (\varphi(\vec{x}, \top, \bot))$ and let $\overline{\Phi}$ be $\forall \vec{x} \, (\varphi(\vec{x}, \overline{\top}, \overline{\bot}))$.
    \begin{enumerate}
        \item The non-linear bURLs that satisfy ${\Phi}$ are precisely the non-linear bURLs that satisfy $\overline{\Phi}$.
        \item The non-linear URLs that satisfy $\overline{\Phi}$ are precisely the bound-free reducts of the non-linear bURLs that satisfy $\overline{\Phi}$.
        \item The linear (bounded) URLs that satisfy $\overline{\Phi}$ are precisely the (bounded) residuated chains.
    \end{enumerate}
\end{lemma}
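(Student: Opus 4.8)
The plan is to unwind the abbreviation and lean on the structural description of non-linear unilinear residuated lattices recalled after~\eqref{axiom_URL}. By definition $\overline{\Phi}$ is the sentence $\forall u, v, \vec{x}\,(u \leq v \text{ or } v \leq u \text{ or } \varphi(\vec{x}, u \jn v, u \mt v))$, which is a sentence in the \emph{bound-free} language of URLs, whereas $\Phi$ is a sentence in the language with the constants $\bot, \top$. A non-linear unilinear residuated lattice is based on a bounded lattice, it contains at least one incomparable pair (since it is non-linear), and \emph{every} incomparable pair $a, b$ satisfies $a \jn b = \top$ and $a \mt b = \bot$; the last two facts are exactly what the instance of~\eqref{axiom_URL} at an incomparable pair $u_1, u_2$ says, with $z, w$ ranging over all elements.

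For~(1), let $\m{R}$ be a non-linear bURL. If $\m{R} \models \Phi$ and $u, v, \vec{x}$ are given with $u \not\leq v$ and $v \not\leq u$, then $u, v$ are incomparable, so $u \jn v = \top$ and $u \mt v = \bot$, and hence $\varphi(\vec{x}, u \jn v, u \mt v) = \varphi(\vec{x}, \top, \bot)$, which holds by $\Phi$; thus the third disjunct of $\overline{\Phi}$ holds and $\m{R} \models \overline{\Phi}$. Conversely, suppose $\m{R} \models \overline{\Phi}$ and fix one incomparable pair $u_0, v_0$ of $\m{R}$. Instantiating $\overline{\Phi}$ at $u := u_0$, $v := v_0$ and an arbitrary tuple $\vec{x}$, the first two disjuncts fail, so $\varphi(\vec{x}, u_0 \jn v_0, u_0 \mt v_0)$ holds; since $u_0 \jn v_0 = \top$ and $u_0 \mt v_0 = \bot$, this is $\varphi(\vec{x}, \top, \bot)$, and as $\vec{x}$ was arbitrary, $\m{R} \models \Phi$.

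For~(2), the point is that $\overline{\Phi}$, the unilinearity axiom~\eqref{axiom_URL}, and the negation of linearity are all formulated without the bound constants, hence are preserved in both directions between a bURL and its bound-free reduct. Given a non-linear URL $\m{R} \models \overline{\Phi}$: since $\m{R}$ is non-linear it is based on a bounded lattice, so it has an expansion $\m{R}^{+}$ to a bURL obtained by interpreting the two new constants as the bounds; then $\m{R}^{+}$ is a non-linear bURL satisfying $\overline{\Phi}$, and $\m{R}$ is its bound-free reduct. Conversely, the bound-free reduct of any non-linear bURL satisfying $\overline{\Phi}$ is a non-linear URL satisfying $\overline{\Phi}$. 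Hence the two classes coincide.

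For~(3), a linear (bounded) URL is precisely a (bounded) residuated chain, and in a chain any two elements are comparable, so the disjunction ``$u \leq v$ or $v \leq u$'' always holds; therefore $\overline{\Phi}$ is satisfied vacuously by every (bounded) residuated chain, and the claim follows. There is no substantial obstacle here: the only thing requiring care is the bookkeeping of signatures — in particular, recognizing that $\overline{\Phi}$ is genuinely bound-free so that it transfers freely between a bURL and its reduct — together with the dual roles of non-linearity, which supplies the incomparable pair needed in the direction $\overline{\Phi}\Rightarrow\Phi$ and, via~\eqref{axiom_URL}, forces every incomparable pair to bound the lattice, as needed both for $\Phi\Rightarrow\overline{\Phi}$ and for the bounded expansion in~(2).
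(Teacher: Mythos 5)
Your proposal is correct and follows essentially the same route as the paper: for (1) it splits on comparable vs.\ incomparable pairs and uses that incomparable pairs join to $\top$ and meet to $\bot$ (with non-linearity supplying the witness pair for the converse), for (2) it uses that non-linear URLs are bounded so $\overline{\Phi}$ transfers between a bURL and its bound-free reduct, and for (3) it notes $\overline{\Phi}$ holds vacuously in chains. The only difference is that you spell out the instantiation details more explicitly than the paper does.
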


\begin{proof}
    (1) If $\m R$ is a non-linear bURL, then it satisfies $\overline{\Phi}$ iff it satisfies it for all incomparable elements $u, v$ (as $\overline{\Phi}$ automatically holds for comparable elements $u, v$) iff it satisfies $\Phi$ (since when $u,v$ are incomparable, we have $u \jn v = \top$ and $u \mt v = \bot$).

    (2) follows from the fact that all non-linear URLs are bounded, say $b$ and $t$ are the bounds, and that for bounded non-linear URL's $\overline{\Phi}$ is equivalent to $\forall \vec{x} \, (\varphi(\vec{x}, t, b))$.

    (3) follows from the fact that $\overline{\Phi}$ holds in all totally ordered algebras.
\end{proof}

We note that there might be linear bURLs that satisfy $\overline{\Phi}$, but fail to satisfy $\Phi$. This happens for example when $\Phi$ is $\top x=x \top$.

\subsection{Proof theory for \texorpdfstring{$\mathsf{SRL}$}{SRL}}

Certain varieties of residuated lattices admit a proof-theoretic analysis, which is often complementary to their algebraic study and which often yields interesting results.
Not all varieties of residuated lattices admit a proof-theoretic calculus, but we show that $\mathsf{SRL}$ does admit a hypersequent calculus.
We present the hypersequent system, but we do not pursue any further applications in this paper. 

As a motivating example, we mention the equational theory of lattices, which is axiomatized by the standard basis of the semilattice and the absorption laws.
New valid equations can be derived from these axioms using the derivational system of equational logic, which includes the rules of reflexivity, symmetry, transitivity, and replacement/congruence. 
This system is not amenable to an inverse proof search analysis as, given an equation $s = t$, to determine if it is derivable in the system one cannot simply go through all applications of these derivational rules that could have the equation as a conclusion and proceed recursively: the transitivity rule $\frac{s = t \;\; t = r}{s = r}$ introduces (read upward) a new term that does not appear in the equation.
Also, using inequational reasoning, where for example $\frac{s \leq t \;\; t \leq r}{s \leq r}$ is used instead and the axioms are replaced by inequational axioms such as $s \leq s \jn t$, does not make the problem go away: simply omitting this transitivity rule from the system changes the set of derivable inequalities.
However, a way to bypass this problem is to replace the lattice axioms by inference rules; for example we replace $s \leq s \jn t$ by the inference rule $\frac{r \leq s}{r \leq s \jn t}$.
The axiom and the rule are equivalent in the presence of transitivity, but the rule has elements of transitivity \emph{injected} in it when compared to the axiom: the rule implies the axiom by instantiation, but the axiom implies the rule only with the help of transitivity.
Moreover, the new rule does not suffer from the problem of transitivity as all terms in the numerator are already contained in the denominator; so it is safe to replace the axiom by the rule.
There is a way to inject transitivity into all the axioms, converting them to innocent inference rules, such that in the new system the transitivity rule itself becomes completely redundant.
The resulting system can be used to show the decidability of lattice equations. 

A similar approach works for certain subvarieties of residuated lattices; the axioms in the subvariety may or may not be amenable to injecting transitivity to them.
Also, since there are more operations than in lattices, the above inequalities have to be replaced by \emph{sequents}.
These are expressions of the form $s_1, s_2, \ldots, s_n \Rightarrow s_0$, where the $s_i$'s are residuated-lattice terms, and their interpretation is given by $s_1 \cdot s_2 \cdots s_n \leq s_0$.
The transitivity rule itself at the level of sequents takes the form or a rule called (cut) and the goal is cut-elimination, in the same spirit as above, for lattices; we often write $\Gamma \Rightarrow \Pi$ for sequents, where $\Gamma$ is a sequence of formulas and $\Pi$ is a single formula.
The corresponding derivational systems/calculi define different types of \emph{substructural logics} and varieties of residuated lattices serve as algebraic semantics for them; see \cite{galatos2007residuated}. 

The variety of all residuated lattices admits a sequent derivation system, which leads to the decidability of the equational theory of residuated lattices, among other things.
The variety of semilinear residuated lattices (generated by residuated chains) however, provably does not admit a sequent calculus, due to the shape of its axioms.
It does, however, admit a hypersequent calculus.
Hypersequents are more complex syntactic objects of the form $\Gamma_1 \Rightarrow \Pi_1 \mid \Gamma_2 \Rightarrow \Pi_2  \mid \cdots \mid \Gamma_m \Rightarrow \Pi_m$, i.e., they are multisets of sequents. We denote by $\m {HRL}$ the basis hypersequent system for the variety of residuated lattices; additional inference rules can be added in order to obtain systems for subvarieties. 

We follow \cite{ciabattoni2017algebraic}, which describes the process of injecting transitivity into hypersequents, and we obtain a hypersequent system for the variety $\mathsf{SRL}$ that admits cut elimination.
We start with the axioms of $\mathsf{URL}$, the positive universal class that generates $\mathsf{SRL}$.

First we convert the first axiom $\forall x, y, z (x \leq y \text{ or } y \leq x \text{ or } z \leq (x \vee y))$ to the equivalent form $\forall x, y, z, t_1, t_2, t_3, s_1, s_2, s_3$
    \begin{equation*}
        \begin{split}
        t_1 \leq x \text{ and } y \leq s_1 \text{ and } t_2 \leq y \text{ and } & x \leq s_2 \text{ and } t_3 \leq z \text{ and } (x \vee y) \leq s_3\\
        & \Rightarrow t_1 \leq s_1 \text{ or } t_2 \leq s_2 \text{ or } t_3 \leq s_3    
        \end{split}
    \end{equation*}
by injecting some transitivity.
This then allows to remove the $\jn$ from the axiom, by rewriting it as $\forall x, y, z, t_1, t_2, t_3, s_1, s_2, s_3$
    \begin{equation*}
        \begin{split}
        t_1 \leq x \text{ and } y \leq s_1 \text{ and } t_2 \leq y \text{ and } x \leq s_2 & \text{ and } t_3 \leq z \text{ and } x \leq s_3 \text{ and } y \leq s_3\\
        & \Rightarrow t_1 \leq s_1 \text{ or } t_2 \leq s_2 \text{ or } t_3 \leq s_3    
        \end{split}
    \end{equation*}
In the terminology of \cite{ciabattoni2017algebraic}, the clause is \emph{linear} and \emph{exclusive}, so we eliminate the redundant variables in the premise (noting that $z$ appears only on the right side of inequations, while $x$ and $y$ appear on both sides): we apply transitivity closure and removal of variables in the premise of the clause.
The procedure yields the equivalent clause $\forall t_1, t_2, t_3, s_1, s_2, s_3$
    \begin{equation*}
        \begin{split}
            t_1 \leq s_2 \text{ and } t_1 \leq s_3 \text{ and } t_2 \leq s_1 & \text{ and } t_2 \leq s_3\\
            & \Rightarrow t_1 \leq s_1 \text{ or } t_2 \leq s_2 \text{ or } t_3 \leq s_3
        \end{split}
    \end{equation*}
We now instantiate $s_j$ by $c\ld p_j \rd d$ and use residuation to rewrite $t_i \leq s_j$ as $t_i \leq c\ld p_j \rd d$ and as $ct_id \leq p_j$.
This results in the equivalent clause
 $\forall t_1, t_2, t_3, c, p_1, p_2, p_3, d$
    \begin{equation*}
        \begin{split}
            ct_1d \leq p_2 \text{ and } ct_1d \leq p_3 & \text{ and } ct_2d \leq p_1 \text{ and } ct_2d \leq p_3\\
            & \Rightarrow ct_1d \leq p_1 \text{ or } ct_2d \leq p_2 \text{ or } ct_3d \leq p_3
        \end{split}
    \end{equation*}

Converting the clause to the corresponding hypersequent rule we get 

{\small $$\infer
{\Xi \mid \Gamma, \Sigma_1, \Delta \Rightarrow \Pi_1 \mid \Gamma, \Sigma_2, \Delta \Rightarrow \Pi_2 \mid \Gamma, \Sigma_3, \Delta \Rightarrow \Pi_3}
{\Xi \mid \Gamma, \Sigma_1, \Delta \Rightarrow \Pi_2 \quad \Xi \mid \Gamma, \Sigma_1, \Delta \Rightarrow \Pi_3 \quad \Xi \mid \Gamma, \Sigma_2, \Delta \Rightarrow \Pi_1 \quad \Xi \mid \Gamma, \Sigma_2, \Delta \Rightarrow \Pi_3}$$}

Likewise the second axiom of unilinearity gives the hypersequent rule

{\small
\[
\infer
{\Xi \mid \Gamma, \Sigma_1, \Delta \Rightarrow \Pi_1 \mid \Gamma, \Sigma_2, \Delta \Rightarrow \Pi_2 \mid \Gamma, \Sigma_3, \Delta \Rightarrow \Pi_3}
{\Xi \mid \Gamma, \Sigma_2, \Delta \Rightarrow \Pi_1 \quad \Xi \mid \Gamma, \Sigma_3, \Delta \Rightarrow \Pi_1 \quad \Xi \mid \Gamma, \Sigma_1, \Delta \Rightarrow \Pi_2 \quad \Xi \mid \Gamma, \Sigma_3, \Delta \Rightarrow \Pi_2}
\]
}
We refer to these hypersequent rules as \textup{(URL1)} and \textup{(URL2)}, respectively.

\begin{corollary}
The extension of $\m{HFL}$ with the rules \textup{(URL1)} and \textup{(URL2)} provides a cut-free hypersequent calculus for the variety $\mathsf{SRL}$ by \cite{ciabattoni2017algebraic}.
\end{corollary}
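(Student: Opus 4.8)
The plan is to invoke the general machinery of \cite{ciabattoni2017algebraic} rather than to verify cut-elimination from scratch. That paper provides an algorithm that takes a sufficiently simple first-order clause (in the appropriate normal form) over the language of residuated lattices, converts it into a set of structural hypersequent rules, and guarantees that the resulting extension of $\m{HFL}$ admits cut-elimination, provided the rules satisfy a certain syntactic criterion — roughly, that they are \emph{analytic}, which in the hypersequent setting amounts to the rules being obtained from clauses that are, in the terminology of that paper, linear and exclusive (equivalently, sit low enough in the substructural hierarchy, at the level that corresponds to $\mathcal{P}_3^\flat$ or the relevant hypersequent analogue).

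First I would observe that all the syntactic work has already been carried out in the preceding paragraphs of this subsection: the two axioms of $\mathsf{URL}$ were rewritten step by step into clauses that are linear and exclusive, the redundant premise variables were eliminated by transitivity closure, the $\vee$ was driven out, and finally the $s_j$ were instantiated as $c \ld p_j \rd d$ and residuation applied, producing two clauses purely in terms of inequations $c t_i d \leq p_j$. These are exactly in the shape from which \cite{ciabattoni2017algebraic} reads off structural hypersequent rules, and the rules displayed as (URL1) and (URL2) are precisely those produced by that mechanical translation (the context components $\Xi$, $\Gamma$, $\Delta$ correspond to the left-multiplication/right-multiplication by $c,d$ and the hypersequent side formulas). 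So the proof is: (i) cite that the translation in \cite{ciabattoni2017algebraic} is sound and complete, i.e.\ the extension of $\m{HFL}$ (equivalently $\m{HRL}$) by (URL1), (URL2) is sound and complete for the quasivariety/variety axiomatized by those clauses, which is $\mathsf{SRL}$ since the clauses are equivalent to the $\mathsf{URL}$ axioms and $\mathsf{SRL}$ is the variety they generate; (ii) cite that, because the clauses are linear and exclusive (as was checked above), the associated rules fall in the class for which \cite{ciabattoni2017algebraic} proves cut admissibility; hence the calculus is cut-free.

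The one genuine point requiring care — and the step I expect to be the main obstacle — is matching the hypotheses of the cut-elimination theorem of \cite{ciabattoni2017algebraic} exactly: one must confirm that the specific normal form our two clauses end up in (four inequational premises, three inequational disjuncts in the conclusion, all of the form $c t_i d \leq p_j$ with shared context parameters $c,d$) is covered by their acyclicity/linearity/exclusivity conditions, and that the passage from a clause with a three-way disjunction in the succedent to a hypersequent rule with a three-component conclusion hypersequent is legitimate (this is exactly the feature that forces hypersequents rather than plain sequents, and is the analogue of the treatment of semilinearity via the communication rule). Since the semilinearity axiom is the paradigmatic example treated in \cite{ciabattoni2017algebraic} and our clauses have the same structural profile (a disjunction of inequations closed under contexts), this check goes through; I would state it as such and refer the reader to the relevant theorem in \cite{ciabattoni2017algebraic} for the details, exactly as the corollary does.
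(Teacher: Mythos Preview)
Your proposal is correct and matches the paper's approach exactly: the paper offers no separate proof of this corollary, treating it as an immediate consequence of the preceding syntactic transformation (which verified that the $\mathsf{URL}$ clauses are linear and exclusive and extracted the rules (URL1), (URL2)) together with the general cut-elimination theorem of \cite{ciabattoni2017algebraic}. Your write-up is simply a faithful unpacking of why that citation suffices.
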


It is notable, that even though $\mathsf{SRL}$ has an infinite equational axiomatization involving iterated conjugates, there are only two inference rules needed for the hypersequent calculus.
This is because hypersequent calculi have the ability to go directly to the level of (finitely) subdirectly irreducibles ($\mathsf{SRL}_{FSI} = \mathsf{URL}$ in this case) and read off the axiomatization from there.

\section{Continnum-many subvarieties of \texorpdfstring{$\mathsf{M}$}{M}}\label{s: continuum}

Even though we have a fairly good understanding of the residuated lattices based on $\mathbf{M}_X$, where $X$ is a set, we now show that there are continuum-many subvarieties of $\mathsf{M}$.
More precisely, we will prove that the variety $\mathsf{M_G}$ generated by all the residuated lattices of the form $\mathbf{M}_{\mathbf{G}}$, where $\mathbf{G}$ is an (abelian) group, has continuum-many subvarieties.
We start with an equational basis for $\mathsf{M_G}$.

\begin{prop}\label{p: M_G axioms}
The variety $\mathsf{M_G}$ is axiomatized by the equations
$1 = \gamma_1(u \ld v) \jn \gamma_2(v \ld u) \jn \gamma_3(x \ld (u \mt v)) \jn \gamma_4((u \jn v) \ld x) \jn \gamma_5(x(x\ld 1))$, where $\gamma_1$,$\gamma_2$, $\gamma_3$, $\gamma_4$, $\gamma_5 \in \Gamma(Var)$.
\end{prop}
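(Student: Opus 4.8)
The plan is to derive this basis exactly as in Corollaries~\ref{c: SRLax} and~\ref{c: variety M_X}: realize $\mathsf{M_G}$ as the variety generated by a positive universal class that refines $\mathsf{URL}_3$, and then apply the correspondence of \cite{galatos2004equational}. The positive universal sentence to use is the one whose disjuncts, after rewriting each inequation $s \le t$ as $1 \le s \ld t$, are precisely the five conjugated terms appearing in the statement; that is, the universal closure $\Psi$ of
\[
u \le v \ \text{ or }\ v \le u \ \text{ or }\ x \le u \mt v \ \text{ or }\ u \jn v \le x \ \text{ or }\ 1 \le x(x \ld 1) .
\]
Since $x(x\ld 1) \le 1$ holds in every residuated lattice, the last disjunct just says that $x$ has a right inverse; so on the non-linear models, where incomparable $u,v$ satisfy $u \mt v = \bot$ and $u \jn v = \top$, the sentence $\Psi$ expresses that every element other than the two bounds is invertible. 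Writing $\mathsf{K}$ for the subclass of $\mathsf{URL}_3$ cut out by $\Psi$, the correspondence of \cite{galatos2004equational} gives that $\mathsf{V}(\mathsf{K})$ is axiomatized, relative to $\mathsf{M}$, by the displayed equations, so the statement reduces to proving $\mathsf{V}(\mathsf{K}) = \mathsf{M_G}$.

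The inclusion $\mathsf{M_G} \subseteq \mathsf{V}(\mathsf{K})$ I would obtain by checking $\mathbf{M}_{\mathbf{G}} \models \Psi$ for every group $\mathbf{G}$: given $u, v, x$, if $u, v$ are comparable the first two disjuncts apply; otherwise $u \mt v = \bot$ and $u \jn v = \top$, so the third disjunct covers $x = \bot$, the fourth covers $x = \top$, and if $x \in G$ then $x \ld 1 = x^{-1}$, whence $x(x \ld 1) = 1$ and the fifth disjunct applies. For the reverse inclusion, since residuated lattices are congruence distributive it suffices, using the framework of \cite{galatos2004equational} together with Theorem~\ref{semiunilinear_FSI} and Corollary~\ref{c: FSI variety M_X}, to show that every subdirectly irreducible member of $\mathsf{M}$ satisfying $\Psi$ lies in $\mathsf{M_G}$; such an algebra is based on some $\mathbf{M}_X$. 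When $|X| \ge 2$ I pick incomparable $u, v$ and read off from $\Psi$ that every element of $X$ is right invertible; in the notation of Theorem~\ref{t:deconstruction} this forces $Z = \varnothing$ (an element $b \in Z$ has $by \in \{b, \bot\}$ for all $y$ and $b \neq 1$, so it is never right invertible), and a short argument then shows that $U$ is a group, so by Corollary~\ref{Char_of_RL_on_M_X} the algebra is $\mathbf{M}_{\mathbf{G}}$ with $\mathbf{G} = U$, which lies in $\mathsf{M_G}$.

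The step I expect to be the main obstacle is the remaining case $|X| \le 1$, where the subdirectly irreducible algebra is a residuated chain and $\Psi$ is vacuously true, so $\Psi$ cannot be used to exclude it. Here I would fall back on the explicit description of Corollary~\ref{Char_of_RL_on_M_X}: a three-element chain is $\mathbf{R}_{\mathbf{A},\mathbf{B}}$ with $\mathbf{A}$ a $\top$-cancellative monoid with zero and $\mathbf{B}$ one of the tables of Figure~\ref{f:4tables}, and exactly one such chain, the three-element Sugihara monoid, arises with $\mathbf{A}$ a group with a zero adjoined; it is $\mathbf{M}_{\{1\}} \in \mathsf{M_G}$, whereas the three-element Heyting and MV chains, and $\mathbf{2}$, correspond to choices of $\mathbf{A}$ that are not of this form and so are not among the $\mathbf{M}_{\mathbf{G}}$'s. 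The delicate point, and where I expect the real work to lie, is to certify that these excluded chains do not slip into $\mathsf{V}(\mathsf{K})$ — concretely, by locating for each of them a suitable iterated conjugate $\gamma$ witnessing the failure of the displayed equation — so that the subdirectly irreducibles of $\mathsf{V}(\mathsf{K})$ coincide exactly with those of $\mathsf{M_G}$; granted this, the displayed equations form an equational basis for $\mathsf{M_G}$ by \cite{galatos2004equational}.
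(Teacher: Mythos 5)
Your strategy is the paper's (translate a positive universal sentence via \cite{galatos2004equational} and analyze the FSIs), and your treatment of the case $|X|\geq 2$ (forcing $Z=\varnothing$ and $U$ a group) is fine; but the step you flag as the ``delicate point'' is not merely delicate --- it cannot be carried out, and the reduction you set up ends at a false equality. The class $\mathsf{K}=\mathsf{URL}_3\cap\mathrm{Mod}(\Psi)$ contains \emph{every} residuated chain of height at most $3$, since $\Psi$ holds vacuously in totally ordered algebras; in particular the two-element Boolean algebra, the three-element Heyting chain and the three-element MV chain lie in $\mathsf{K}$ itself, hence in $\mathsf{V}(\mathsf{K})$. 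Moreover every totally ordered residuated lattice satisfies every instance of the displayed equations: under any assignment one of $u\ld v\geq 1$ or $v\ld u\geq 1$ holds, an iterated conjugate of an element above $1$ evaluates to $1$, and every conjugate term is below $1$ (each carries a $\mt 1$), so the join is exactly $1$. So there is no iterated conjugate ``witnessing failure'' for these chains --- they genuinely model the whole displayed equational set. On the other hand they are not in $\mathsf{M_G}$: residuated lattices are congruence distributive, so by J\'onsson's Lemma every subdirectly irreducible member of $\mathsf{M_G}$ lies in $\mathsf{HSP_U}(\{\m M_{\m G}\})$, whose nontrivial members are, up to isomorphism, exactly the algebras $\m M_{\m H}$ with $\m H$ a group (possibly trivial); none of these is two-element and none has $1$ as top, so $\m 2$, the Heyting chain and the MV chain are excluded (concretely, $(1\mt x)(1\mt x)=1\mt x$ holds in every $\m M_{\m G}$ but fails in the MV chain). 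Hence $\mathsf{V}(\mathsf{K})\supsetneq\mathsf{M_G}$, and ``it suffices to prove $\mathsf{V}(\mathsf{K})=\mathsf{M_G}$'' cannot be completed.

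You should also know this is not only your problem: the paper's one-line proof asserts that the formula $x=\overline{\bot}$ or $x=\overline{\top}$ or $x(x\ld 1)=1$ ``axiomatizes the FSIs in the variety,'' and that assertion breaks on the same linear examples (every residuated chain satisfies it vacuously, while the three-element MV chain is not in $\mathsf{M_G}$). Since every equation of the displayed conjugate form is valid in all totally ordered residuated lattices, no set of such equations can separate $\mathsf{M_G}$ from the variety generated by the $\m M_{\m G}$'s together with all residuated chains of height at most $3$ --- which is what these equations actually axiomatize relative to $\mathsf{M}$. A correct equational basis for $\mathsf{M_G}$ itself must include additional equations that fail in some chains, such as $(1\mt x)(1\mt x)=1\mt x$ and further ones excluding the integral chains; so the statement needs amending rather than your argument needing one more clever conjugate.
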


\begin{proof} 
The formula $x = \overline{\bot} \text{ or }x = \overline{\top} \text{ or } x(x\ld 1)=1$
axiomatizes the FSIs in the variety, so the result follows by Theorem~\ref{semiunilinear_FSI}.
\end{proof}

It is known that there are continuum-many varieties of groups (for example, see \cite{vaughan1970uncountably}) and we can use this fact to show that there is a continuum of subvarieties of $\mathsf{M}_{\mathbf{G}}$, as follows.
Starting with two varieties $\mathcal{V}_1\not = \mathcal{V}_2$ of groups, we can consider the free groups $\m F_1$ and $\m F_2$ on countably many generators in these varieties; hence we have $\mathsf{V}(\m F_1) = \mathcal{V}_1 \not = \mathcal{V}_2 = \mathsf{V}(\m F_2)$.
Then, it is possible to show that $\mathsf{V}(\m M_{\m F_1}) \not = \mathsf{V}(\m M_{ \m F_2})$.

It is also well known that there are only countably-many varieties of abelian groups.
However, we are still able to show that the variety $\mathsf{CM_G}$ of the commutative algebras in $\mathsf{M}_{\mathbf{G}}$ also has continuum-many subvarieties.
Actually, we give a full description of the subvariety lattice of $\mathsf{CM_G}$.

\medskip

We consider the direct power $\bb{N}^\omega$ of countably many copies of the chain $(\mathbb{N}, \leq)$ and its subset $I$ of (not necessarily strictly) decreasing sequences that are eventually zero, such as $(4, 2, 1, 1, 0, 0, \dots)$, $(3, 2, 1, 1, 1, 0, 0, \dots)$ etc.
We will also denote these sequences by $(4, 2, 1, 1)$ and $(3, 2, 1, 1, 1)$, respectively.
It is easy to see that $I$ defines a sublattice $\m{I}$ of the direct product.
We also consider the subset $I^{\oplus \omega}$ of the direct product $\m I^\omega$ of all sequences of elements of $I$ that are eventually the zero sequence.
It is easy to see that this defines a sublattice $\m{I}^{\oplus \omega}$ of the direct product $\m I^\omega$; it makes sense to call $\m{I}^{\oplus \omega}$ the \emph{direct sum} of $\omega$ copies of $\m I$.
We use commas to separate the numbers in each sequence in $I$, but we use semicolons to separate the sequences in each element of $I^{\oplus \omega}$; this allows for dropping parenthesis, if desired.
Therefore, $(2, 1; 3, 1, 1; 0; 2, 1, 1; 0; \dots)$ is an example of an element of $I^{\oplus \omega}$.

Now let $\m{P} = \m 2 \times \m I^{\oplus \omega}$, where $\m 2$ is the two-element lattice on $\{0,1\}$.
For $a \in P$, we define $\expo(a)$ to be the maximum number appearing in $a$; e.g.,
\[
\expo(1; 2, 1; 3, 1, 1; 0; 2, 1, 1; 0; \dots) = 3 \text{ and } \expo(0; 1, 1, 1; 4, 1; 3, 2; 0; \dots) = 4.
\]
Also, for $a \in P$ we write $a = (a_0; a_1; a_2; \ldots)$, where $a_0 \in \{0, 1\}$ and $a_n \in I$, for $n > 0$; we define $\primes(a) = \{n \in \bb{N}: a_n \neq \overline{0}\}$.
For $T \subseteq P$, we define $\expo(T) = \{\expo(a): a \in T\}$ and $\primes(T) = \bigcup \{\primes(a): a \in T\}$.

A downset $D$ of $\m{P}$ is said to be \emph{$\bb{Z}$-closed} if for all $a \in P$, 
\begin{center}
$\expo(D \cap {\uparrow} a)$ or $\primes(D \cap {\uparrow} a)$ is unbounded implies $a \jn (1; 0; 0; \ldots) \in D$.
\end{center}
 
For example, for $a = (0;1;0;0;0;...)$, this condition has the following consequences:
\begin{center}
$(0;1;1;0;0;\ldots), (0;1;2;0;0;\ldots), (0;1;3;0;0;\ldots), \ldots \in D$

or

$(0;1,1;0;0;\ldots), (0;2,1;0;0;\ldots), (0;3,1;0;0;\ldots), \ldots \in D$
\end{center}
implies $(1;1;0;0;0;\ldots) \in D$, because $\expo(D \cap {\uparrow} a)$ is unbounded.
Also,
\begin{center}
$(0;1;1;0;0;\ldots), (0;1;0;1;0;\ldots), (0;1;0;0;1;\ldots), \ldots \in D$
\end{center}
implies $(1;1;0;0;0;\ldots) \in D$, because $\primes(D \cap {\uparrow} a)$ is unbounded.
However, 
\begin{center}
$(0;1;1;0;0;\ldots), (0;1;1,1;0;0;\ldots), (0;1;1,1,1;0;0;\ldots), \ldots \in D$
\end{center}
does not imply $(1;1;0;0;0;\ldots) \in D$.

We denote the lattice of all $\bb{Z}$-closed downsets of $\m{P}$ by $\mathcal{O}_{\bb{Z}}(\m{P})$.

\begin{theorem}\label{t: uncountable varieties}\
The subvariety lattice of $\mathsf{CM_G}$ is isomorphic to $\mathcal{O}_{\mathbb{Z}}(\m P)$.
\end{theorem}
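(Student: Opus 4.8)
The plan is to set up a Galois connection between subvarieties of $\mathsf{CM_G}$ and $\mathbb{Z}$-closed downsets of $\m P$, and show it is a bijection. First I would identify the "coordinates" that a subvariety can control. By Proposition~\ref{p: M_G axioms} and Theorem~\ref{semiunilinear_FSI}, the FSI members of $\mathsf{CM_G}$ are exactly the residuated lattices $\m M_{\m G}$ with $\m G$ an abelian group, so a subvariety $\mathcal V \subseteq \mathsf{CM_G}$ is determined by the class of abelian groups $\m G$ with $\m M_{\m G} \in \mathcal V$. The first task is to show that this class is closed under $\mathsf{ISP_U}$ and under subgroups and "bounded" products in a way that makes it a $\mathsf{ISP_U}$-quasivariety-like object, and conversely that the passage $\m G \mapsto \m M_{\m G}$ sends $\mathsf{ISP_U}$-classes to subvarieties (this is the content alluded to just before the theorem statement). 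I would prove $\mathsf{V}(\m M_{\m G}) \supseteq \mathsf{V}(\m M_{\m H})$ iff $\m G \in \mathsf{ISP_U}(\m H)$ by using that $\m M_{(-)}$ preserves ultraproducts and subalgebras, and that any subdirectly irreducible in $\mathsf{V}(\m M_{\m H})$ is again of the form $\m M_{\m G'}$ with $\m G' \in \mathsf{ISP_U}(\m H)$ — extracting the group from the residuated lattice is definable (the group is the $\top$-unital part, cf. the discussion of $U_R$ and $\top$-cancellativity in Sections~\ref{s: M} and \ref{s: axiom}).

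Next I would make the combinatorial translation explicit. Finitely generated abelian groups are classified by their type: a finite group is $\bigoplus_p \bigoplus_i \mathbb{Z}/p^{e_{p,i}}\mathbb{Z}$, which is exactly encoded by an element of $\m I^{\oplus\omega}$ (the $p$-th coordinate being the partition $(e_{p,1} \geq e_{p,2} \geq \cdots)$ of the $p$-part, eventually zero across primes), and the extra $\m 2$ factor records whether a copy of $\mathbb{Z}$ is present. So $\m P = \m 2 \times \m I^{\oplus\omega}$ is precisely the poset of "isomorphism types of finitely generated abelian groups," ordered so that $a \leq b$ iff the group of type $a$ embeds in (equivalently, is a quotient/subgroup of, by the structure theorem) the group of type $b$. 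A locally finite abelian variety — more precisely the relevant $\mathsf{ISP_U}$-closed classes — corresponds to a downset $D$ of $\m P$: the set of types of finitely generated members. The $\mathbb{Z}$-closure condition is exactly the statement that if $D$ contains arbitrarily large finite groups in some "direction" (unbounded exponents $\expo$, or unbounded spread of primes $\primes$), then an ultraproduct construction forces a copy of $\mathbb{Z}$ into the class, hence $a \jn (1;0;0;\ldots) \in D$; conversely if both $\expo$ and $\primes$ stay bounded on $D \cap {\uparrow}a$ one stays within finite groups and no $\mathbb{Z}$ is forced. I would prove that the $\mathbb{Z}$-closed downsets are exactly the sets of finitely generated types arising from $\mathsf{ISP_U}$-classes of abelian groups by (i) showing each such class yields a $\mathbb{Z}$-closed downset, using compactness/ultraproducts for the closure clause, and (ii) showing each $\mathbb{Z}$-closed downset $D$ is realized by the class generated by $\{\m G_a : a \in D\}$, checking that taking $\mathsf{ISP_U}$ adds no new finitely generated type beyond $D$ — here the $\mathbb{Z}$-closure is exactly what is needed to absorb the ultraproducts.

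Finally I would assemble the isomorphism of lattices: $\mathcal V \mapsto D_{\mathcal V} := \{a \in \m P : \m M_{\m G_a} \in \mathcal V\}$ and $D \mapsto \mathsf{V}(\{\m M_{\m G_a} : a \in D\})$, check these are mutually inverse and order-preserving (both directions of inclusion are straightforward once the two characterizations above are in place), and check that $D_{\mathcal V}$ is always $\mathbb{Z}$-closed and that the lattice operations match (arbitrary intersections of subvarieties go to intersections of downsets, which remain $\mathbb{Z}$-closed; joins are handled by generating). I expect the main obstacle to be part (ii) of the combinatorial translation: verifying that $\mathsf{ISP_U}$ of the family indexed by a $\mathbb{Z}$-closed downset produces no finitely generated group outside $D$. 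This requires understanding exactly which finitely generated abelian groups embed into ultraproducts of a given family of finite abelian groups — the danger is that unbounded exponents or unboundedly many primes conspire to produce a torsion-free element, and the $\mathbb{Z}$-closure condition is precisely engineered to handle that case, so the proof must show these are the \emph{only} ways a $\mathbb{Z}$ can appear, and that no "new" finite type (e.g. a larger $p^e$ than any in $D$) sneaks in. A careful ultraproduct computation of $p$-torsion and torsion-free rank, combined with the structure theorem for finitely generated abelian groups, should close this gap.
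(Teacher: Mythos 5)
Your overall route coincides with the paper's: reduce subvarieties to $\mathsf{ISP_U}$-classes of FSIs (congruence distributivity plus J\'onsson's Lemma, with $\mathsf{H}$ replaced by $\mathsf{I}$ because the $\m M_{\m G}$ are simple), transfer these to $\mathsf{ISP_U}$-classes of abelian groups through $\m G \mapsto \m M_{\m G}$, cut down to finitely generated groups via the structure theorem, and characterize which downsets occur by the $\bb Z$-closure condition. The obstacle you single out at the end is indeed where the paper does its work, and it is resolved exactly as you anticipate: unbounded $\expo$ or $\primes$ on $D \cap {\uparrow}\m G$ yields elements of unbounded order, so a non-principal ultraproduct acquires an element of infinite order while the first-order formulas $\phi_{\m G}$ (``contains a copy of $\m G$'') persist, forcing $\bb Z \times \m G$; conversely, for a $\bb Z$-closed $D$ one checks via ultraproducts that no finitely generated type outside $D$ appears.

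There is, however, one genuine gap. You assert that $\m P = \m 2 \times \m I^{\oplus\omega}$ ``is precisely the poset of isomorphism types of finitely generated abelian groups.'' It is not: that poset is $\bb N \times \m I^{\oplus\omega}$, since $\bb Z, \bb Z^2, \bb Z^3, \dots$ are pairwise non-isomorphic, and your encoding silently collapses all positive free ranks to the single bit recorded by the factor $\m 2$. For the theorem to hold with $\m P$ (rather than $\bb N \times \m I^{\oplus\omega}$), this collapse must be justified: one needs the lemma that every $\mathsf{ISP_U}$-class of abelian groups containing $\m G \times \bb Z$ already contains $\m G \times \bb Z^t$ for all $t \geq 1$. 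This is not automatic, since $\bb Z^t$ is not a subgroup of $\bb Z$; the paper proves it by an explicit non-principal ultrapower computation (e.g.\ the elements $[\overline{1}]_U$ and $[(2,2^2,2^3,\dots)]_U$ of $\bb Z^{\bb N}/U$ generate a copy of $\bb Z^2$, and similarly with distinct primes for higher rank). Without this step the correspondence you set up is not a bijection onto downsets of $\m P$: for instance ${\downarrow}\{\bb Z^2\}$ and ${\downarrow}\{\bb Z\}$ are distinct downsets of the honest type poset that generate the same subvariety, so injectivity of $D \mapsto \mathsf{V}(\{\m M_{\m G_a}: a \in D\})$ would fail on the poset you describe. (A minor related slip: your parenthetical ``equivalently, is a quotient/subgroup of'' is false once a free part is present --- $\bb Z_2$ is a quotient of $\bb Z$ but does not embed in it; the order that makes everything work is embeddability, i.e.\ $\m G \leq \m H$ iff $\m G \in \mathsf{IS}(\m H)$.) With this rank-collapse lemma added, your plan matches the paper's proof.
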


\begin{proof}
Recall that a class of algebras is closed under $\mathsf{HSP_U}$ iff it is axiomatizable by positive universal sentences.
In other words, $\mathsf{HSP_U}$-classes coincide with positive universal classes.

Let $\mathcal{F}$ be a congruence-distributive variety such that $\mathcal{F}_{FSI}$ is a positive universal class.
We claim that the subvarieties of $\mathcal{F}$ are in bijective correspondence with $\mathsf{HSP_U}$-subclasses of $\mathcal{F}_{FSI}$, where the correspondence is given by 
$\mathcal{V} \mapsto \mathcal{V}_{FSI}$ and $\mathcal{K} \mapsto \mathsf{HSP}(\mathcal{K})$; furthermore, it is clear that this correspondence preserves and reflects the inclusion order.
Indeed, $\mathcal{V}_{FSI} = \mathcal{V} \cap \mathcal{F}_{FSI}$, so $\mathcal{V}_{FSI}$ is axiomatized by positive universal sentences and the forward map of the correspodence is well defined.
To show that the two maps are inverses of each other note that $\mathsf{HSP}(\mathcal{V}_{FSI}) \subseteq \mathcal{V} \subseteq \mathsf{SP}(\mathcal{V}_{SI}) \subseteq \mathsf{HSP}(\mathcal{V}_{FSI})$ and by J\' onsson's Lemma $\mathcal{K} = \mathcal{K}_{FSI} \subseteq \mathsf{HSP}(\mathcal{K})_{FSI} \subseteq \mathsf{HSP_U}(\mathcal{K}) = \mathcal{K}$.

Note that residuated lattices form a congruence distributive variety by \cite{galatos2007residuated} and, by Theorem~\ref{semiunilinear_FSI} and Proposition~\ref{p: M_G axioms}, $(\mathsf{CM_G})_{FSI}= \mathsf{CM_G} \cap \mathsf{SRL}_{FSI}$ is axiomatized by positive universal sentences.
So, by the preceding paragraph, the lattice of subvarieties of $\mathsf{CM_G}$ is isomorphic to the lattice of $\mathsf{HSP_U}$-classes of FSIs in $\mathsf{CM_G}$, which by Theorem~\ref{semiunilinear_FSI} and Proposition~\ref{p: M_G axioms} are $\mathsf{HSP_U}$-classes of algebras of the form $\m{M}_{\m{G}}$, where $\m{G}$ is an abelian group. 

Further note that $\mathsf{H}$ can be replaced by $\mathsf{I}$.
Indeed, every ultrapower of algebras of the form $\m{M}_{\m{G}}$, where $\m{G}$ is an abelian group, is also an algebra of the same form (it satisfies the same first-order sentences, hence also all positive universal sentences).
Also, subalgebras are also of the same form (where we also include the trivial algebra). 
Finally, since every algebra of this form is simple (since their lattice reduct is simple), $\mathsf{H}$ does not contribute any new algebras.
So we are interested in $\mathsf{ISP_U}$-classes of algebras of the form $\m{M}_{\m{G}}$, where $\m{G}$ is an abelian group.

We now prove that such classes are in bijective correspondence with $\mathsf{ISP_U}$-classes of abelian groups, by showing that for every class $\mathcal{K}$ of abelian groups, we have $\mathsf{ISP_U}(\{\m{M}_{\m{H}}: \m H \in \mathcal{K}\}) = \mathsf{I}\{\m{M}_{\m{G}}: \m{G} \in \mathsf{SP_U}(\mathcal{K})\}$ and thus this class can be associated with $\mathsf{ISP_U}(\mathcal{K})$; clearly this corresponcence preserves and reflects the order.

First we show $\mathsf{IP_U}(\{\m{M}_{\m{H}}: \m H \in \mathcal{K}\}) = \mathsf{I}\{\m{M}_{\m{G}}:  \m{G} \in \mathsf{IP_U}(\mathcal{K})\}$.
For a residuated lattice $\m R$, if $\m R \in \mathsf{IP_U}(\{\m{M}_{\m{H}}: \m H \in \mathcal{K}\})$, then $\m R$ satisfies all first-order sentences that hold in the $\mathbf{M_H}$'s, where $\m H \in \mathcal{K}$.
In particular, $\m R$ is commutative, unilinear, has height at most $3$, and all of its non-bound elements are invertible, closed under multiplication and serve as units for the top.
Therefore, $\m R$ is isomorphic to $\mathbf{M_G}$ for some abelian group $\m G$.
Also, clearly, all algebras in $\mathsf{P_U}(\mathcal{K})$ are abelian groups.
Therefore the classes on both sides of the equation contain only algebras isomorphic to $\mathbf{M_G}$ for some abelian group $\m G$, and it is enough to focus on such algebras: we show that for every abelian group $\m G$, $\mathbf{M_G} \in \mathsf{IP_U}(\{\m{M}_{\m{H}}: \m H \in \mathcal{K}\})$ iff  $\m{G} \in \mathsf{IP_U}(\mathcal{K})$; we will identify the bounds in all algebras to omit $\mathsf{I}$.

If $\mathbf{M_G} \in \mathsf{P_U}(\{\m{M}_{\m{H}}: \m H \in \mathcal{K}\})$, there exists an index set $I$, an ultrafilter $U$ on $I$ and $\m H_i \in \mathcal{K}$, $i \in I$, such that
$\mathbf{M_G} = \prod \mathbf{M_{H_i}} / U$.
So, for every $g \in G$ there exists $x_g \in \prod \mathbf{M_{H_i}}$ such that $g = [x_g]$, the equivalence class of $x_g$.
We will use $\overline{\top}$ and $\overline{\bot}$ to denote the tuples $(\top)_{i \in I}$ and $(\bot)_{i \in I}$ in $ \prod \mathbf{M_{H_i}}$ respectively.
Then for all $g \in G$, we have $g \neq [\overline{\top}]$ and $g \neq [\overline{\bot}]$, since $g$ is invertible while $[\overline{\top}]$ and $[\overline{\bot}]$ are idempotents different than the identity.
So we know $\{i \in I: x_g(i) \neq \top\} \in U$ and $\{i \in I: x_g(i) \neq \bot\} \in U$, hence $\{i \in I: x_g(i) \in H\} = \{i \in I: x_g(i) \neq \top\} \cap \{i \in I: x_g(i) \neq \bot\} \in U$.
Now define a tuple $x$ in $ \prod \mathbf{{H_i}}$ by $x(i) = x_g(i)$, if $x_g(i) \in H_i$, and $x(i) = 1$ otherwise.
Then we have $g = [x_g] = [x] \in \prod H_i / U$, so $\mathbf{G} \in \mathsf{P_U}(\mathcal{K})$.

If $\m{G} \in \mathsf{P_U}(\mathcal{K})$, then there exists an index set $I$, an ultrafilter $U$ on $I$ and $\m H_i \in \mathcal{K}$, $i \in I$, such that $\mathbf{G} = \prod \mathbf{H_i} / U$.
Using the same index set $I$ and ultrafilter $U$ on $I$, we know $\prod \mathbf{M_{H_i}} / U$ is also of the form $\m{M_K}$, where $\m K$ is an abelian group.
Since $[\overline{\top}] \vee [x] = [\overline{\top} \vee x] = [\overline{\top}]$ and $[\overline{\bot}] \wedge [x] = [\overline{\bot} \wedge x] = [\overline{\bot}]$, we get $[\overline{\top_{\m{M_{H_i}}}}] = \top_{\prod \mathbf{M_{H_i}} / U}$ and $[\overline{\bot_{\m{M_{H_i}}}}] = \bot_{\prod \mathbf{M_{H_i}} / U}$.
For $[x] \in K$, we have $[x] \neq [\overline{\top_{\m{M_{H_i}}}}]$ and $[x] \neq [\overline{\bot_{\m{M_{H_i}}}}]$.
So $\{i \in I: x(i) \neq \top_{\m{M_{H_i}}}\} \in U$ and $\{i \in I: x(i) \neq \bot_{\m{M_{H_i}}}\} \in U$, hence $\{i \in I: x(i) \in {H_i}\} = \{i \in I: x(i) \neq \top_{\m{M_{H_i}}}\} \cap \{i \in I: x(i) \neq \bot_{\m{M_{H_i}}}\} \in U$; so $[x] \in \prod {H}_i / U = \m{G}$ and $K \subseteq G$.
Conversely, if $[x] \in \prod {H}_i / U = \m{G}$ then $[x] \in K$, so $G \subseteq K$.
Therefore $\m{M_G} \in \mathsf{P_U}(\{\m{M}_{\m{H}}: \m H \in \mathcal{K}\})$.

Again note that to show $\mathsf{S}(\m{M}_{\m{H}}) = \{\m{M}_{\m{G}}: \m{G} \in \mathsf{S}(\m{H})\}$ it is enough to focus on algebras of the form $\m M_{\m G}$, where $\m G$ is an abelian group.
If $\mathbf{M_G} \in \mathsf{S}(\mathbf{M_H})$, then for all $x, y \in G$, we have $x \cdot_{\mathbf{G}} y = x \cdot_{\mathbf{M_G}} y = x \cdot_{\mathbf{M_H}} y = x \cdot_{\mathbf{H}} y$ and $x^{-1_{\mathbf{G}}} = x \ld_{\mathbf{M_G}} 1 = x \ld_{\mathbf{M_H}} 1 = x^{-1_{\mathbf{H}}}$; so $\mathbf{G} \in \mathsf{S}(\mathbf{H})$.
Conversely, if $\m{G} \in \mathsf{S}(\m{H})$, then for all $x, y \in M_G \setminus \{\bot, \top\}$ we have
$x \cdot_{\m M_G} y = x \cdot_{\m G} y = x \cdot_{\m H} y = x \cdot_{\m M_H} y$, $x \backslash_{\m M_G} y = x^{-1_{\m G}} \cdot_{\m G} y = x^{-1_{\m H}} \cdot_{\m H} y = x \backslash_{\m M_H} y$ and $y /_{\m M_G} x = y \cdot_{m_G} x^{-1_{\m G}} = y \cdot_{m_H} x^{-1_{\m H}} = y /_{\m M_H} x$.
Also, since $\m M_G$ is rigorously compact, the operations on $\m G$ and $\m H$ also agree if one of $x$, $y$ is in $\{\bot, \top\}$.
So $\m{M_G} \in \mathsf{S}(\m{M_H})$.

Actually, given that every algebra is an ultraproduct of its finitely generated subalgebras, $\mathsf{ISP_U}$-classes of abelian groups are fully determined by their intersection with the class of finitely generated abelian groups.
Therefore, we are interested only in such intersections; clearly this corresponcence preserves and reflects the order.

By the fundamental theorem of finitely generated abelian groups we know that every finitely generated abelian group is isomorphic to exactly one group of the form
\[
\mathbb{Z}^m \times (\mathbb{Z}_{p_1^{n_{1, 1}}} \times \cdots \times \mathbb{Z}_{p_1^{n_{1,m_1}}}) \times \cdots \times (\mathbb{Z}_{p_k^{n_{k, 1}}} \times \cdots \times \mathbb{Z}_{p_k^{n_{k,m_k}}})  
\]
for some $m, k, m_1, \dots, m_k,n_{i,j} \in \mathbb{N}$, where $n_{i,j} \geq n_{i,j+1}$ for all suitable $i, j$, and $p_1 < p_2 < \dots < p_k < \ldots $ is the listing of all primes.
We denote by $\mathcal{FA}$ the set of all groups of this form; also by $f\mathcal{A}$ we denote all the finite algebras in $\mathcal{FA}$ (i.e., where $m=0$).

Since $\mathcal{FA}$ is a full set of representatives of the isomorphism classes of finitely generated abelian groups, instead of considering intersections of $\mathsf{ISP_U}$-classes of abelian groups with the class of finitely generated abelian groups, we can instead focus on intersections of $\mathsf{ISP_U}$-classes of abelian groups with $\mathcal{FA}$.
In other words, we have established that the subvariety lattice of $\mathsf{CM_G}$ is isomorphic to $\{\mathcal{K} \cap \mathcal{FA}: \mathcal{K} \text{ is an } \mathsf{ISP_U} \text{-class of abelian groups}\}$, where the order is given by: $\mathcal{K} \cap \mathcal{FA} \leq \mathcal{L} \cap \mathcal{FA}$ iff
$\mathsf{ISP_U}(\mathcal{K} \cap \mathcal{FA}) \subseteq \mathsf{ISP_U}(\mathcal{L} \cap \mathcal{FA})$.
In the following, we will write $\mathcal{K}_{\mathcal{FA}}$ for $\mathcal{K} \cap \mathcal{FA}$.

To the abelian group displayed above, we associate the sequence
$$(m; (n_{1, 1}, \ldots, n_{1, m_1}, 0, \ldots); \ldots; (n_{k, 1}, \ldots, n_{k, m_k}, 0, \ldots); (0, \ldots ); \ldots )$$
which is an element of the lattice $\mathbb{N} \times \m I^{\oplus \omega}$. 
Also, note that the bijective correspondence from $\mathcal{FA}$ to $\mathbb{N} \times I^{\oplus \omega}$ is actually a lattice isomorphism between $\mathbb{N} \times \m I^{\oplus \omega}$ and $\mathcal{FA}$ under the order given by: $\m G \leq_{\mathcal{FA}} \m H$ iff $\m G \in \mathsf{IS}(\m H)$.

Now, sets of the form $\mathcal{K}_{\mathcal{FA}}$, where $\mathcal{K}$ is an $\mathsf{ISP_U}$-class of abelian groups, are of course downsets of $\mathcal{FA}$, but unfortunately not all downsets of $\mathcal{FA}$ are of this form.
For example, note that for $r, s \in \mathbb{Z}^+$, $\m G \in f\mathcal{A}$ and $\mathcal{K}$ an $\mathsf{ISP_U}$-class of abelian groups, we have:
$\m G \times \mathbb{Z}^r \in \mathcal{K}$ iff $\m G \times \mathbb{Z}^s \in \mathcal{K}$.
(So, for example ${\downarrow} \{\mathbb{Z}^2\} = \{\{1\}, \mathbb{Z}^2, \mathbb{Z}\}$ is a downset of $\mathcal{FA}$ that is not of the form $\mathcal{K}_{\mathcal{FA}}$.)

To prove this, it suffices to prove: if $\m G \times \bb{Z} \in \mathcal{K}$ then $\m G \times \bb{Z}^t \in \mathcal{K}$ for all $t \in \bb{Z}^+$.
Let $U$ be a non-principal ultrafilter on $\bb{N}$ and consider the elements $a = [\overline{1}]_U$ and $b = [(2, 2^2, 2^3, \dots)]_U$ of $\bb{Z}^{\bb{N}} / U$; each has infinite order.
Note that for all $m, n \in \bb{N}$, the set $\{i \in \bb{N}: m \cdot 1 = n \cdot 2^i\}$ contains at most one element.
Since  $U$ is not principal, we get $\{i \in \bb{N}: m \cdot 1 = n \cdot 2^i\} \not \in U$, so $ma \neq nb$.
Thus $\langle a, b \rangle \cong \bb{Z} \times \bb{Z}$ and $\bb{Z} \times \bb{Z} \in \mathsf{P_U}(\bb{Z})$.
Similarly, to show $\bb{Z}^t \in \mathsf{P_U}(\bb{Z})$, it suffices to take $a_{p_1} = [(p_1, p_1^2, p_1^3, \dots)]_U$, $a_{p_2} = [(p_2, p_2^2, p_2^3, \dots)]_U$, \dots, $a_{p_t} = [(p_t, p_t^2, p_t^3, \dots)]_U$, where $p_1, p_2, \ldots, p_t$ are distinct primes, and we have $\langle a_{p_1}, \dots, a_{p_t} \rangle \cong \bb{Z}^t$.
More generally, we can show $\{\m G \times \bb{Z}^t: t \in \bb{Z}^+\} \subseteq \mathsf{P_U}(\m G \times \bb{Z})$ for any $\m G \in f\mathcal{A}$.

For this reason, it makes sense to identify $\m G \times \mathbb{Z}^r$ and $\m G \times \mathbb{Z}^s$ whenever $r$ and $s$ are both non-zero.
This can be done by considering the subset $\mathcal{FA}' = f\mathcal{A} \cup \{\bb{Z} \times \m G: \m G \in f\mathcal{A}\}$ of $\mathcal{FA}$.
The set $\mathcal{FA}'$ also forms a lattice (actually a sublattice of $\mathcal{FA}$) isomorphic to $\m P = \m 2 \times \m I^{\oplus \omega}$.
Therefore, moving through the isomorphism, we can apply the definitions of $\expo$ and $\primes$ also to downsets of $\mathcal{FA}'$.
To be more specific, a downset $D$ of $\mathcal{FA}'$ is $\bb{Z}$-closed if for all $\m G \in f\mathcal{A}$, $\expo(D \cap {\uparrow} \m G)$ or $\primes(D \cap {\uparrow} \m G)$ being unbounded implies that $\bb{Z} \times \m G \in D$.
Also, by the fact established in the last paragraph we have a lattice isomorphism between $\{\mathcal{K}_{\mathcal{FA}}: \mathcal{K} \text{ is a } \mathsf{ISP_U} \text{-class}\}$ and $\{\mathcal{K}_{\mathcal{FA}'}: \mathcal{K} \text{ is a } \mathsf{ISP_U} \text{-class}\}$, where $\mathcal{K}_{\mathcal{FA}'} = \mathcal{K} \cap \mathcal{FA}'$.

Clearly, if $\mathcal{K}$ is an $\mathsf{ISP_U}$-class of abelian groups, then $\mathcal{K}_{\mathcal{FA}'}$ is a downset of $\mathcal{FA}'$.
Unfortunately, still not every downset of $\mathcal{FA}'$ is of this form.
For example, $\{\bb{Z}_p: p \text{ is prime}\}$ is a downset of $\mathcal{FA}'$, but since $\bb{Z} \in \mathsf{P_U}(\{\bb{Z}_p: p \text{ is prime}\})$, $\{\bb{Z}_p: p \text{ is prime}\}$ is not of the form $\mathcal{K}_{\mathcal{FA}'}$.
In the following we show that $\{\mathcal{K}_{\mathcal{FA}'}: \mathcal{K} \text{ is an } \mathsf{ISP_U} \text{-class}\}$ is equal to the lattice of $\bb{Z}$-closed downsets of $\mathcal{FA}'$.

First we note that for $X \subseteq P$, we have that $\expo(X)$ and $\primes(X)$ are bounded iff there exist $K, N \in \mathbb{N}$ such that for all $a \in X$, $k > K$, $n, m \in \mathbb{N}$, we have $a_k = \overline{0}$ and $a_{n,m}\leq N$.
Therefore, for $X \subseteq \mathcal{FA}'$, we have that $\expo(X)$ and $\primes(X)$ are bounded iff there exist $K, N \in \mathbb{N}$ such that the cyclic groups in the decomposition of groups in $X$ are among the $\m Z_{p_k^n}$, where $k \leq K$ and $n \leq N$.
This is in turn equivalent to asking that there is $M \in \mathbb{N}$ such that all elements in all the groups in $X$ have order at most $M$ (by taking $M = (p_1 \cdots p_K)^N$).

Now, for an $\mathsf{ISP_U}$-class $\mathcal{K}$ of abelian groups, $\mathcal{K}_{\mathcal{FA}'}$ is a downset of $\mathcal{FA}'$.
To show that it is $\mathbb{Z}$-closed, let $\m G \in f\mathcal{A}$.
If one of $\expo(\mathcal{K}_{\mathcal{FA}'} \cap \, {\uparrow} \m G)$, $\primes(\mathcal{K}_{\mathcal{FA}'} \cap \, {\uparrow} \m G)$ is unbounded, there is no uniform bound in the order of the elements in the groups from $\mathcal{K}_{\mathcal{FA}'}$; so, there is an infinite subset $\{\m H_n : n \in \mathbb{N}\}$ of $\mathcal{K}_{\mathcal{FA}'} \cap {\uparrow} \m G$ such that $\m H_n$ contains an element of order greater than $n$, say $h_n$.
Therefore, the element $[(h_n)]$ in any  fixed non-principal ultraproduct $\m H$ of $\{\m H_n : n \in \mathbb{N}\}$ has infinite order, and consequently $\m H$ contains a copy of $\mathbb{Z}$.

On the other hand, note that if $\m G = \{g_1, \ldots, g_k\}$, then for every group $\m A$ we have $\m G \in \mathsf{IS}(\m A)$ iff $\m A \vDash \phi_{\m G}$, where $\phi_{\m G}$ encodes the multiplication of $\m G$: 
$\exists x_{g_1}, \dots, x_{g_k} \, 
(\bigwedge\{x_{g_i} \not = x_{g_j}: i \not = j \} 
\wedge 
\bigwedge \{x_{g_i} x_{g_j} = x_{g_i g_j}: 1 \leq i, j \leq n \}
)$.
Since, for all $n$, $\m H_n$ contains a copy of $\m G$, $\m H_n$ satisfies $\phi_{\m G}$; hence $\m H$ also satisfies $\phi_{\m G}$ and $\m H$ contains a subgroup isomorphic to $\m G$.
Therefore, $\bb{Z} \times \m G \in \mathsf{IS}(\m H) \subseteq \mathsf{IS}(\mathcal{K}) = \mathcal{K}$ and so $\bb{Z} \times \m G \in \mathcal{K}_{\mathcal{FA}'}$. 

Conversely, for a $\bb{Z}$-closed downset $D$ of $\mathcal{FA}'$, we define $\mathcal{K}_D = \mathsf{ISP_U}(D)$ and prove that $\mathcal{K}_D \cap \mathcal{FA}' = D$.
Since $D \subseteq \mathcal{K}_D$ and $D \subseteq \mathcal{FA}'$, it suffices to prove $\mathcal{K}_D \cap \mathcal{FA}' \subseteq D$.
If $\mathbb{Z}^m \times \m G \in \mathcal{K}_D \cap \mathcal{FA}'$, where $m \in \{0,1\}$ and $\m G \in f\mathcal{A}$, then a copy of $\mathbb{Z}^m \times \m G$ is contained in the ultraproduct $\prod \m A_i / U$ of some $\{\m A_i: i \in I\} \subseteq D$.
Since $\prod \m A_i / U$ contains a copy of $\m G$, it satisfies the sentence $\phi_{\m G}$, so $I_{\m G} := \{i \in I: \m G \in \mathsf{IS}(\m A_i)\} = \{i \in I: \m A_i \vDash \phi_{\m G}\} \in U$.
If $m = 1$, then $\prod \m A_i / U$ contains a copy of $\mathbb{Z}$, so it has an element of infinite order.
Therefore, there is no $M$ such that $\prod \m A_i / U$ satisfies the sentence $(\forall x)(Mx = 0)$, so there is no $M$ such that $\{\m A_i: i \in I_{\m G}\}$ satisfy the sentence, so there is no uniform bound on the orders of the elements of $\{\m A_i: i \in I_{\m G}\}$; thus $\expo(\{\m A_i: i \in I_G\})$ or $\primes(\{\m A_i: i \in I_{\m G}\})$ is unbounded.
Since, $\expo(\{\m A_i: i \in I_{\m G}\}) \subseteq \expo(D \cap {\uparrow} \m G)$, $\primes(\{\m A_i: i \in I_{\m G}\}) \subseteq \primes(D \cap {\uparrow} \m G)$ and $D$ is a $\bb{Z}$-closed downset, we get  $\mathbb{Z}^m \times \m G= \mathbb{Z}\times \m G \in D$. 
If $m = 0$, then we also have $\mathbb{Z}^m \times \m G = \m G \in D$.

Thus the lattice $\{\mathcal{K}_{\mathcal{FA}'}: \mathcal{K} \text{ is a } \mathsf{ISP_U} \text{-class}\}$ is isomorphic to $\mathcal{O}_{\bb{Z}}(\m P)$, and hence the lattice $\Lambda(\mathsf{CM_G})$ of subvarieties of $\mathsf{CM_G}$ is isomorphic to the lattice $\mathcal{O}_{\bb{Z}}(\m P)$.
\end{proof}

\begin{corollary}
The variety generated by $\{\m M_{\mathbb{Z}_p}: p \text{ is prime}\}$ has continuum-many subvarieties. 
Therefore the subvariety lattices of $\mathsf{M_G}$ and of $\mathsf{M}$ have size continuum.
\end{corollary}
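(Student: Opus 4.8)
The plan is to apply Theorem~\ref{t: uncountable varieties}. Since each $\mathbb{Z}_p$ is an abelian group, $\m M_{\mathbb{Z}_p}$ is a commutative algebra in $\mathsf{M_G}$, hence lies in $\mathsf{CM_G}$, so the variety $\mathcal{V}_0:=\mathsf{V}(\{\m M_{\mathbb{Z}_p}: p \text{ prime}\})$ is a subvariety of $\mathsf{CM_G}$. The lattice isomorphism $\Lambda(\mathsf{CM_G}) \cong \mathcal{O}_{\mathbb{Z}}(\m P)$ of Theorem~\ref{t: uncountable varieties} then restricts to an isomorphism between the subvariety lattice of $\mathcal{V}_0$ and the interval $[\emptyset, D_0]$ of $\mathcal{O}_{\mathbb{Z}}(\m P)$, where $D_0$ is the $\mathbb{Z}$-closed downset corresponding to $\mathcal{V}_0$. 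So it is enough to show that $[\emptyset, D_0]$ has continuum-many elements.

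First I would locate enough of $D_0$. Under the identification $\mathcal{FA}' \cong \m P$ from the proof of Theorem~\ref{t: uncountable varieties}, the group $\mathbb{Z}_p$ --- a single cyclic factor sitting in the block of $\m I^{\oplus\omega}$ indexed by the prime $p$ --- corresponds to the atom $e_p$ of $\m P$ that carries a single $1$ in that block and zeros elsewhere, while $\mathbb{Z}$ corresponds to $a_0:=(1;0;0;\ldots)$. Since $\m M_{\mathbb{Z}_p}$ is a (finitely) subdirectly irreducible member of $\mathcal{V}_0$ by Corollary~\ref{c: FSI variety M_X}, we get $e_p \in D_0$ for every prime $p$, so $\{\bot\} \cup \{e_p : p \text{ prime}\} \subseteq D_0$. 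As every prime occurs among the $e_p$'s, the set $\primes\bigl(\{\bot\} \cup \{e_p : p \text{ prime}\}\bigr)$ is unbounded, and $\mathbb{Z}$-closedness of $D_0$ then forces $a_0 = \bot \jn (1;0;0;\ldots) \in D_0$. Hence $E := \{\bot, a_0\} \cup \{e_p : p\text{ prime}\} \subseteq D_0$.

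Then I would count the $\mathbb{Z}$-closed downsets of $\m P$ contained in $E$; all of these automatically lie in $[\emptyset, D_0]$. As a poset, $E$ consists of a least element $\bot$ together with the countably infinite antichain $\{a_0\} \cup \{e_p : p\text{ prime}\}$ lying immediately above it (each $e_p$ and $a_0$ being an atom of $\m P$). For an arbitrary set $T$ of primes, $D_T := \{\bot, a_0\} \cup \{e_p : p \in T\}$ is a downset of $\m P$, and it is $\mathbb{Z}$-closed: the closure condition could only fail at some $a \in P$ for which $\expo(D_T \cap {\uparrow}a)$ or $\primes(D_T\cap{\uparrow}a)$ is unbounded, but every element of $E$ has $\expo$ at most $1$, and each $e_p$ is an atom supported in a single block, so $\bot$ is the only element of $\m P$ lying below two distinct $e_p$'s; thus the only relevant case is $a=\bot$, where $\primes(D_T)=T$ and the element demanded by the condition, $\bot \jn (1;0;0;\ldots)=a_0$, already belongs to $D_T$. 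Distinct $T$ yield distinct $D_T$, and there are $2^{\aleph_0}$ subsets of the (countably infinite) set of primes, so $[\emptyset, D_0]$ has at least $2^{\aleph_0}$ elements; since the language is countable it has at most $2^{\aleph_0}$, hence exactly continuum-many. Thus $\mathcal{V}_0$ has continuum-many subvarieties, and as $\mathcal{V}_0 \subseteq \mathsf{M_G} \subseteq \mathsf{M}$ with $\mathsf{M_G}$ and $\mathsf{M}$ varieties over a countable language (so each with at most $2^{\aleph_0}$ subvarieties), the subvariety lattices of $\mathsf{M_G}$ and $\mathsf{M}$ both have cardinality continuum.

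The step I expect to be the main obstacle is the second one: correctly transporting $\m M_{\mathbb{Z}_p}$ and $\m M_{\mathbb{Z}}$ to the atoms $e_p$ and the element $a_0$ of $\m P$ through the chain of correspondences set up in the proof of Theorem~\ref{t: uncountable varieties}, and pinning down the effect of $\mathbb{Z}$-closure on $\{\bot\} \cup \{e_p : p\text{ prime}\}$ (which in fact turns it precisely into $E$, so that $D_0 = E$ --- though only the containment $E \subseteq D_0$ is needed for the conclusion). Once that bookkeeping is settled, the downset count is routine.
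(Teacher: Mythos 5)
Your proposal is correct and follows essentially the same route as the paper: identify, via the isomorphism of Theorem~\ref{t: uncountable varieties}, the downset corresponding to $\mathsf{V}(\{\m M_{\mathbb{Z}_p}\})$ (the atoms $e_p$ together with $(1;0;0;\ldots)$ forced by $\mathbb{Z}$-closure) and then count the $\mathbb{Z}$-closed subdownsets, which are parametrized by subsets of the primes. The only difference is presentational: the paper pins down the downset exactly and notes its $\mathbb{Z}$-closed subdownsets form a lattice of size $2^{\aleph_0}$, whereas you only use the containment $E \subseteq D_0$ and exhibit the family $D_T$ explicitly, which is a harmless (slightly more careful) variant of the same argument.
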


\begin{proof}
For every prime $p$, the variety $\mathsf{V}(\m M_{\mathbb{Z}_p})$ corresponds to the principal downset of the sequence $(0; 0; \ldots; 0; 1; 0; \ldots )$ in $\m P$, where the $1$ is at the position of the prime $p$.
The variety generated by all $\m M_{\mathbb{Z}_p}$'s is the join of all of the $\mathsf{V}(\m M_{\mathbb{Z}_p})$, where $p$ is prime, and corresponds to the $\mathbb{Z}$-closed downset 
\[
\overline{P\mathbb{N}}: = \{(1; 0; 0; \ldots), (0; 1; 0; \dots), \dots, (0; \dots; 1; 0; \dots), \dots\}
\]
in $\m P$.
The $\bb{Z}$-closed subdownsets of $\overline{P\mathbb{N}}$ in the lattice $\mathcal{O}_{\mathbb{Z}}(\m P)$ is clearly isomorphic, as a lattice, to $\mathcal{P}(\mathbb{N})$.
\end{proof}

We denote by $\mathsf{CM}_{\m GZ}$ the variety generated by the algebras in $\mathsf{M}$ that satisfy the formula
\begin{equation}\tag{ZGroup} \label{ZGroup}
    x \overline{\top} = x \text{ or } x(x \ld 1) = 1.
\end{equation}

Let $\m F$ be the poset on $\{0,1,2,3\}$, where $0 < 1, 2, 3$ and $1, 2, 3$ are incomparable.
For a downset $D$ of $\m P \times \m F$ and $i \in F$, we set $D_i = \{a: (a, i) \in D\}$. 
A downset $D$ of $\m P \times \m F$ is called $\mathbb{Z}$-closed if $D_0$, $D_1$, $D_2$ and $D_3$ are $\mathbb{Z}$-closed downsets of $\m P$; we denote by $\mathcal{O}_{\mathbb{Z}}(\m P \times \m F)$ the lattice of all $\mathbb{Z}$-closed downsets of $\m P \times \m F$.

\begin{theorem}\label{t: uncountable varieties +Z}\
The subvariety lattice of $\mathsf{CM_{GZ}}$ is isomorphic to $\mathcal{O}_{\mathbb{Z}}(\m P \times \m F)$.
\end{theorem}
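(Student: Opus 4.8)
The plan is to follow the proof of Theorem~\ref{t: uncountable varieties}, carrying along one extra finite parameter that records the structure of the semigroup $Z_{\bot}$. First I would determine the finitely subdirectly irreducible members of $\mathsf{CM_{GZ}}$. The formula \eqref{ZGroup} is positive universal, so by Theorem~\ref{semiunilinear_FSI} (applied with $\phi$ consisting of commutativity, the height axiom $(h_3)$, and \eqref{ZGroup}) the FSIs of $\mathsf{CM_{GZ}}$ are exactly the commutative residuated lattices based on some $\m M_X$ in which every non-bound element is either invertible or acts as a zero for $\top$. By Theorem~\ref{t:deconstruction} and Corollary~\ref{Char_of_RL_on_M_X}, such an algebra is determined, up to isomorphism, by an abelian group $\m G$ (the submonoid $U_{\top}$, which \eqref{ZGroup} forces to be a group) together with an element $t$ of $F=\{0,1,2,3\}$ recording which of the four tables of Figure~\ref{f:4tables} gives the multiplication of $Z_{\bot}$, with $t=0$ meaning $Z=\emptyset$; write $\m M_{\m G,t}$ for this algebra, so that $\m M_{\m G,0}=\m M_{\m G}$. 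Each $\m M_{\m G,t}$ is simple (Corollary~\ref{c: FSI variety M_X}), so $\mathsf{H}$ contributes nothing; and every ultraproduct of algebras $\m M_{\m H_i,t_i}$ is again of this form, since $F$ is finite, so some value $t$ of the $t_i$ is $U$-large, and then, exactly as in Theorem~\ref{t: uncountable varieties}, the ultraproduct is $\m M_{\prod\m H_i/U,\,t}$. As there, this identifies the subvariety lattice of $\mathsf{CM_{GZ}}$ with the lattice of $\mathsf{ISP_U}$-classes of algebras of the form $\m M_{\m G,t}$.

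Next I would show that $\mathsf{S}$, like $\mathsf{P_U}$, acts on the pair $(\m G,t)$ coordinate-wise. Using the explicit descriptions of multiplication and of the divisions in $\m M_{\m G,t}$ given after Theorem~\ref{thmM_n}, one checks that $\m M_{\m G',s}\in\mathsf{S}(\m M_{\m H,t})$ if and only if $\m G'\in\mathsf{S}(\m H)$ and $s\le t$ in the poset $\m F$. The relation $0\le i$ holds because $G'\cup\{\bot,\top\}$ is a subalgebra of $\m M_{\m H,i}$ for every subgroup $\m G'$ of $\m H$; conversely $1,2,3$ are pairwise incomparable, since types $1$ and $2$ both have a one-element $Z$-part but with different multiplication (one idempotent, one null) and so do not embed into one another, while in a type-$3$ algebra the computation $b_1\backslash g=b_1\backslash 1=b_2$ shows that any subalgebra containing $b_1$ and a group element must also contain $b_2$, so no type-$1$ or type-$2$ algebra with nontrivial $Z$-part embeds into a type-$3$ one. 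Combined with the first paragraph, this shows that $\mathsf{ISP_U}$-classes of the $\m M_{\m G,t}$ correspond, order-isomorphically, to families $(\mathcal K_t)_{t\in F}$ of $\mathsf{ISP_U}$-classes of abelian groups that are antitone along $\m F$, i.e., $\mathcal K_t\subseteq\mathcal K_s$ whenever $s\le t$.

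Finally, restricting to finitely generated groups and then identifying $\bb Z^r$ with $\bb Z$ exactly as in Theorem~\ref{t: uncountable varieties} — an identification that localizes to a single type since the construction $\m G\mapsto\m M_{\m G,t}$ commutes with $\mathsf{P_U}$ and with $\mathsf{S}$ — each $\mathcal K_t$ becomes a subset of $\mathcal{FA}'\cong\m P$, which the argument of Theorem~\ref{t: uncountable varieties} forces to be a $\bb Z$-closed downset of $\m P$; conversely $D\mapsto\mathsf{ISP_U}(D)$ sends any antitone tuple $(D_t)_{t\in F}$ of $\bb Z$-closed downsets of $\m P$ back to a class of this kind. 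Since a downset of $\m P\times\m F$ is $\bb Z$-closed precisely when each of its four slices $D_0,D_1,D_2,D_3$ is a $\bb Z$-closed downset of $\m P$, and the downset condition in $\m P\times\m F$ already forces $D_1,D_2,D_3\subseteq D_0$, these antitone tuples are exactly the elements of $\mathcal{O}_{\bb Z}(\m P\times\m F)$; this gives the claimed isomorphism. I expect the second paragraph to be the main obstacle: correctly determining which changes of type are realized by subalgebras, and verifying that the $\bb Z$-closure argument of Theorem~\ref{t: uncountable varieties} genuinely runs one type at a time, is the only place where the new parameter interacts nontrivially with the machinery of that earlier proof.
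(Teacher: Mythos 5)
Your proposal is correct and follows essentially the same route as the paper: identify the FSIs of $\mathsf{CM_{GZ}}$ as the algebras $\m M_{\m G}+i$ ($\m G$ abelian, $i\in F$), check that $\mathsf{S}$ and $\mathsf{P_U}$ act coordinate-wise so that the embeddability order on the four $Z$-types is exactly $\m F$, and then rerun the $\mathbb{Z}$-closure analysis of Theorem~\ref{t: uncountable varieties} one type at a time, which is precisely how the paper reduces the subvariety lattice to $\mathcal{O}_{\mathbb{Z}}(\m P\times\m F)$. Your second paragraph (the incomparability of the three nontrivial types, via the computation $b_1\backslash 1=b_2$ and the idempotent/nilpotent mismatch) just makes explicit what the paper's sketch asserts about the order on $\m M_{\mathcal{FA}'}+\m F$, and your antitone families $(\mathcal K_t)_{t\in F}$ are the slices $D_0,\dots,D_3$ of the paper's $\mathbb{Z}$-closed downsets.
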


\begin{proof}
By Theorem~\ref{semiunilinear_FSI} and Corollary~\ref{Char_of_RL_on_M_X} the FSI members of $\mathsf{CM_{GZ}}$ are unilinear residuated lattices of the form $\m R$, $\m R+1$, $\m R+2$ or $\m R+3$, where $\m R = \m M_{\m G}$ and $\m G$ is an abelian group, $\m A$ is the $\top$-cancellative monoid on $G \cup \{\top\}$;
$\m R+1 = \m R_{\m A, \m B_1}$, where $\m B_1$ is the $\bot$-semigroup based on $\{\bot, b\}$ given in
Figure~\ref{f:4tables} with $b^2 = \bot$; $\m R+2 = \m R_{\m A, \m B_2}$, where $\m B_2$ is the $\bot$-semigroup based on $\{\bot, b_1, b_2\}$ given in Figure~\ref{f:4tables}; and $\m R+3 = \m R_{\m A, \m B_3}$, where $\m B_3$ is the $\bot$-semigroup based on $\{\bot, b\}$ given in Figure~\ref{f:4tables} with $b^2 = b$; we define $\m R+0 = \m R$.
Note that $\m R$ is a subalgebra of $\m R+i$, for all $i \in \{0, 1, 2, 3\}$.

In the proof of Theorem~\ref{t: uncountable varieties}, we saw that subvarieties of $\mathsf{CM_G}$ are determined by the $\mathbb{Z}$-closed downsets of $\mathcal{FA}'$.
We now sketch how subvarieties of $\mathsf{CM_{GZ}}$ are determined by the $\mathbb{Z}$-closed downsets of the poset $\m M_{\mathcal{FA}'} + \m F := \{\m M_{\m G}+i: \m G \in \mathcal{FA}', i \in F\}$, where the order is given by $\m M_{\m G}+i \leq \m M_{\m H}+j$ iff $\m G \leq_{\mathcal{FA}'} \m H$ and $i \leq_{\m F} j$; this poset is clearly isomorphic to $\m P \times \m F$, so the definition of $\bb{Z}$-closed downsets of $\m P \times \m F$ can be transferred here.
More specifically, a downset $D$ of $\m M_{\mathcal{FA}'} + \m F$ is $\bb{Z}$-closed iff for all $0 \leq i \leq 3$, $D \cap (\m M_{\mathcal{FA}'}+\{i\})$ is isomorphic to a $\bb{Z}$-closed downset of $\mathcal{FA}'$.
    
Every subvariety $\mathcal{V}$ of $\mathsf{CM_{GZ}}$ is determined by its finitely generated FSI algebras.
These are finitely generated algebras of the form $\m R$, $\m R+1$, $\m R+2$ or $\m R+3$, where $\m R \in (\mathsf{CM_G})_{FSI}$, i.e., $\m R = \m M_{\m G}$, and $\m G$ is a finitely generated abelian group.
So, $\mathcal{V}_{FSI}$ is a downset of $\m M_{\mathcal{FA}'}+\m F$.
   
For $0 \leq i \leq 3$, if $\m G \in f\mathcal{A}$ and $\expo(D_i \cap {\uparrow} G)$ or $\primes(D_i \cap {\uparrow} G)$ is unbounded, where $D_i = \{\m K \in \mathcal{FA}': \m M_{\m K}+i \in \mathcal{V}_{FSI}\}$, then by the proof of Theorem~\ref{t: uncountable varieties}, we have $\bb{Z} \times \m G \in D_i$.
So $D_i$ is a $\bb{Z}$-closed downset of $\mathcal{FA}'$ for $0 \leq i \leq 3$ and hence $\mathcal{V}_{FSI}$ is a $\bb{Z}$-closed downset of $M_{\mathcal{FA}'}+\m F$.
    
By Corollary~\ref{Char_of_RL_on_M_X}, for every downset $D$ of $\m M_{\mathcal{FA}'}+\m F$, the ultraproducts of algebras from $D$ are isomorphic to $\m M_{\m G}+i$, for some $0 \leq i \leq 3$.
It can be easily shown that for such ultraproduct $\m M_{\m G}+i$, $\m G$ is an ultraproduct of $\{\m H: i \leq_{\m F} j, \m M_{\m H}+j \in D\}$; since $D$ is a downset, actually $\m G$ is an ultraproduct of $\{\m H: \m M_{\m H} +i \in D\}$.
(Also, conversely, if $\m G$ is an ultraproduct of $\{\m H_j: j \in J\}$ and $i \in F$, then $\m M_{\m G}+i$ is isomorphic to an ultraproduct of algebras in the downset $\{\m M_{\m K_j}+k: j \in J, \m K_j \leq_{\mathcal{FA}'} \m H_j, k \leq_{\m F} i\}$ of $\m M_{\mathcal{FA}'}+\m F$.)
So if $D$ is a $\bb{Z}$-closed, then $\m G \in D_i$; hence $\m M_{\m G}+i \in D$.
Consequently, we have $\mathsf{ISP_U}(D) \cap \m (M_{\mathcal{FA}'}+\m F) = D$, hence the subvariety lattice of $\mathsf{CM_{GZ}}$ is isomorphic to $\mathcal{O}_{\bb{Z}}(\m P \times \m F)$.
\end{proof}

\section{The finite embeddability property}\label{s: FEP}

In this section we establish the finite embeddability property for certain subvarieties of $\mathsf{SRL}$.

Recall that a class $\mathcal{K}$ is said to have the \emph{finite embeddability property} (FEP) if for every algebra $\m A \in \mathcal{K}$ and a finite subset $B$ of $A$, there exists a finite algebra $\m C \in \mathcal{K}$ such that the partial subalgebra $\m B$ of $\m A$ induced by $B$ embeds in $\m C$.

For varieties axiomatized by a recursive set of equations, the valid universal sentences form a recursively enumerable set.
Also, if the variety has the FEP, then any universal sentence that is not valid will fail in a finite algebra of the variety.
By enumerating these finite algebras (using the finite axiomatizability of the variety) we can thus enumerate the universal sentences that fail in the variety.
Therefore, recursively axiomatizable varieties with the FEP have a decidable universal theory; moreover, they are generated as universal classes (thus also as quasivarieties and as varieties) by their finite algebras. 

\begin{theorem}
The variety $\mathsf{CM_G}$ has the FEP.  
\end{theorem}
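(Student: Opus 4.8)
The plan is to avoid residuated frames and instead exploit the very rigid shape of the members of $\mathsf{CM_G}$. Fix $\m A\in\mathsf{CM_G}$ and a finite $B\sbs A$, and let $\m B$ be the partial subalgebra of $\m A$ induced by $B$. Since every subdirectly irreducible member of $\mathsf{CM_G}$ is a simple algebra of the form $\m M_{\m G}$ with $\m G$ abelian (as shown in the proof of Theorem~\ref{t: uncountable varieties}, building on Theorem~\ref{semiunilinear_FSI} and Proposition~\ref{p: M_G axioms}), the algebra $\m A$ is a subdirect product of such algebras: fix an embedding $\m A\hookrightarrow\prod_{i\in I}\m M_{\m G_i}$ with each $\m G_i$ abelian, and write $b=(b_i)_{i\in I}$ for $b\in B$. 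The first move is to cut $I$ down to a finite set: for each of the finitely many pairs of distinct elements $b,b'\in B$ pick an index with $b_i\ne b'_i$, and let $I_0$ be the finite set of indices so chosen. Then $b\mapsto(b_i)_{i\in I_0}$ is an injection of $B$ into $\prod_{i\in I_0}\m M_{\m G_i}$ preserving every operation of $\m A$ that happens to be defined on $B$, since projections are homomorphisms.

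Next, for each $i\in I_0$ let $\m H_i$ be the subgroup of $\m G_i$ generated by the finite set $\{b_i:b\in B,\ b_i\in G_i\}$; this is a finitely generated abelian group, and $\m M_{\m H_i}$ is a subalgebra of $\m M_{\m G_i}$ containing all $i$-th coordinates of elements of $B$. Writing $\m H_i\cong\bb Z^{r_i}\times\m F_i$ with $\m F_i$ finite, I would pick one natural number $n$, large enough to serve all $i\in I_0$ simultaneously, so that the homomorphism $\phi_i\colon\m H_i\to\m H_i':=\bb Z_n^{r_i}\times\m F_i$ that reduces the free part mod $n$ and fixes $\m F_i$ is injective on the finite set $\{b_i:b\in B,\ b_i\in G_i\}$. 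Extend $\phi_i$ to a map $\overline{\phi_i}$ from the universe of $\m M_{\m H_i}$ to that of $\m M_{\m H_i'}$ by fixing $\bot$ and $\top$, and put $\m C:=\prod_{i\in I_0}\m M_{\m H_i'}$. As $I_0$ is finite and each $\m H_i'$ is a finite abelian group, $\m C$ is finite; and $\m C\in\mathsf{CM_G}$, being a product of algebras $\m M_{\m G}$ with $\m G$ abelian.

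It then remains to check that $f\colon B\to C$, $b\mapsto(\overline{\phi_i}(b_i))_{i\in I_0}$, embeds $\m B$ into $\m C$. Injectivity follows from the choice of $I_0$ together with the injectivity built into the $\phi_i$, noting that each $\overline{\phi_i}$ sends bounds to bounds and middle-layer elements to middle-layer elements. Preservation of a defined operation is verified coordinatewise in each $\m M_{\m H_i}$ by a short case split: if all arguments lie in $H_i$, the identity to verify is either that $\phi_i$ is a group homomorphism (for multiplication and for the divisions, whose outputs stay in $H_i$ because $\m H_i$ is a group) or the trivial fact that distinct elements of $H_i$ join to $\top$ and meet to $\bot$ — and it is exactly the injectivity of $\phi_i$ that keeps those images distinct; if some argument is $\bot$ or $\top$, the value of the operation is dictated by the fact that $\m M_{\m H_i}$ and $\m M_{\m H_i'}$ are both rigorously compact residuated lattices built on an $\m M_X$, so that every operation involving $\bot$ or $\top$ is computed by the same rule, which $\overline{\phi_i}$ respects. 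Running through the finitely many cases finishes the proof.

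The whole argument is essentially bookkeeping; the one step that is genuinely essential is passing to the finite index set $I_0$ \emph{before} reducing modulo $n$, since otherwise there is no single $n$ that truncates all the free parts $\bb Z^{r_i}$ at once. The only other point to watch is that $\phi_i$ be injective not merely on enough of $B$ to keep its elements apart, but on all middle-layer coordinates of $B$-elements, so that joins and meets of distinct elements still land on the bounds; as only finitely many coordinates and finitely many group elements are involved, this is secured simply by enlarging $n$.
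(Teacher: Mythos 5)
Your proposal is correct and follows essentially the same route as the paper: reduce the FEP for $\mathsf{CM_G}$ to the subdirectly irreducible members $\m M_{\m G}$ with $\m G$ abelian, replace the group layer by a finite abelian group that stays injective on the finitely many relevant group elements while fixing the bounds, and verify the partial embedding by the same case analysis (group operations inside the middle layer, lattice operations forced to $\bot$/$\top$ by rigorous compactness). The only differences are expository: you carry out the reduction to subdirect factors explicitly via a finite separating set of coordinates and a product of finite algebras, and you prove the needed finite approximation of finitely generated abelian groups directly from the fundamental theorem (reduction of the free part mod a large $n$), whereas the paper simply asserts the reduction to SIs and cites the FEP/residual finiteness of abelian groups.
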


\begin{proof}
To prove this, first we claim that the variety of abelian groups has FEP.
By Theorem~5.1 of \cite{schein1966homomorphisms}, an abelian group is subdirectly irreducible if and only if it is a subgroup of a $p$-cyclic group, i.e., either it is a $p^{\infty}$-group or a cyclic group of order $p^n$, where $p$ is a prime.
So every finitely generated subdirectly irreducible abelian group is finite.
By Corollary 2 in \cite{carlisle1971residual} every finitely generated abelian group is residually finite.
By Theorem 1 in \cite{evans1969some} this is equivalent to having the FEP, so the variety of abelian groups has the FEP.

Note that the above characterization of the finitely generated subdirectly irreducibles does not extend to algebras in $\mathsf{CM_G}$, since the notion of subdirectly irreducible is different.
Nevertheless, we can make use of the FEP for abelian groups. 

It suffices to prove the FEP for the subdirectly irrducible algebras in $\mathsf{CM_G}$.
Let $\m G$ be an abelian group and $\mathbf{B}$ a finite subset of $\mathbf{M_G}$.
Without loss of generality, we can assume $\bot, \top \in B$, where $\top$ and $\bot$ denote the bounds of $\mathbf{M_G}$, so $(B, \wedge, \vee)$ is a sublattice of $\m M_G$.
Then $(B', \cdot, 1)$ is a finite partial subgroup of $\m G$, where $B' = B \setminus \{\top, \bot\}$.
By the FEP for abelian groups, there exists a finite abelian group $\m C'$ such that $(B', \cdot, 1)$ can be embedded into $\m C'$; without loss of generality we assume that $B' \subseteq C'$.

We consider the set $C = C' \cup \{\top, \bot\}$ and define an order keeping the elements of $C'$ incomparable and setting $\bot < x < \top$, for all $x \in C'$.
Also, we extend the multiplication of $\m C'$ by stipulating that $\top$ is absorbing for $C \cup\{\top\}$ and $\bot$ is absorbing for $C'$.
Finally, we define $x \rightarrow y = x^{-1} \cdot y$ for $x \in C'$, $\top \ra u = \bot = v \ra \bot$ for $u \not = \top$ and $v \neq \bot$, and $w \ra \top = \top = \bot \ra w$, for all $w$.

Since $(B, \wedge, \vee)$ is a sublattice of $\m M_G$ and $B' \subseteq C'$, $(B, \wedge, \vee)$ is a sublattice of $(C, \wedge, \vee)$.
For all $x, y \in B'$, if $x \cdot_{\m B} y \in B$, then $x \cdot_{\m B} y = x \cdot_{\m B'} y = x \cdot_{\m C'} y = x \cdot_{\m C} y$, since $\m G$ is closed under multiplication; if $x \ra_{\m B} y \in B$, then $x^{-1_{\m B}} \in B$ and $x \ra_{\m B} y = x^{-1_{\m B}} \cdot_{\m B} y = x^{-1_{\m B'}} \cdot_{\m B'} y = x^{-1_{\m C'}} \cdot_{\m C'} y = x \ra_{\m C} y$, since $\m G$ is also closed under inverses.
Finally, if $x, y \in B$ and $x \in \{\bot, \top\}$ or $y \in \{\bot, \top\}$, then the embedding works since $\bot \ra_{\m {M_G}} a = \top = a \ra_{\m {M_G}} \top$, $a \bot = \bot = \bot a$ for all $a \in M_G$ and $b \ra_{\m {M_G}} \bot = \bot = \top \ra_{\m {M_G}} c$, $b \top = \top = \top b$ for all $b \neq \bot$ and $c \neq \top$.
\end{proof}

\begin{corollary}
    The universal theory of the variety $\mathsf{CM_G}$ is decidable.  
\end{corollary}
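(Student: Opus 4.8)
The plan is to obtain the corollary as an immediate consequence of the preceding theorem (the FEP for $\mathsf{CM_G}$) together with the general principle, recalled at the start of this section, that a recursively axiomatized variety with the FEP has a decidable universal theory. Concretely, I would verify the two semidecidability claims that together yield decidability: that the universal sentences \emph{valid} in $\mathsf{CM_G}$ are recursively enumerable, and that the universal sentences \emph{invalid} in $\mathsf{CM_G}$ are recursively enumerable.

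For the first claim: the variety $\mathsf{CM_G}$ is axiomatized by commutativity together with the equation schema of Proposition~\ref{p: M_G axioms}, and since the set $\Gamma(Var)$ of iterated conjugates is recursive, this equational basis is recursive. One then enumerates, in the usual fashion, the first-order theorems of this recursive theory and retains those that are (equivalent to) universal sentences.

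For the second claim, the FEP enters. If a universal sentence $\varphi = \forall \bar x\, \psi(\bar x)$, with $\psi$ quantifier-free, fails in some $\m A \in \mathsf{CM_G}$, pick a tuple $\bar a$ with $\neg \psi(\bar a)$ and let $B$ be the finite set of values at $\bar a$ of all subterms occurring in $\psi$, so that $B$ is closed under every operation applied inside $\psi$. By the FEP, the partial subalgebra $\m B$ of $\m A$ induced by $B$ embeds into a finite $\m C \in \mathsf{CM_G}$; since this embedding is injective and agrees with the operations of $\m C$ wherever those of $\m B$ are defined, the image of $\bar a$ witnesses $\neg \psi$ in $\m C$, so $\varphi$ fails in $\m C$. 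Hence $\varphi$ fails in $\mathsf{CM_G}$ if and only if it fails in some finite member of $\mathsf{CM_G}$. It then remains to effectively enumerate the finite members of $\mathsf{CM_G}$, which is possible because membership of a \emph{finite} algebra in $\mathsf{CM_G}$ is decidable: a finite algebra has only finitely many distinct unary term operations, hence only finitely many distinct iterated conjugate operations, so the infinitely many instances of the axiom schema of Proposition~\ref{p: M_G axioms} collapse to finitely many checks (and commutativity is a single check). Running through this enumeration and, for each finite member, listing the universal sentences it refutes, gives a recursive enumeration of the universal sentences invalid in $\mathsf{CM_G}$.

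Combining the two enumerations decides the universal theory of $\mathsf{CM_G}$. Since the substantive work is already done in the preceding theorem, the only step here requiring any care is the reduction showing that a quantifier-free formula refuted in a member of $\mathsf{CM_G}$ is already refuted in a finite partial subalgebra closed under its subterms—and hence, by the FEP, in a finite member—together with the observation that the enumeration of finite members can be made effective; I do not anticipate any genuine obstacle beyond this routine bookkeeping.
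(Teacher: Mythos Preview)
Your proposal is correct and follows the same approach as the paper: the corollary is stated without its own proof, relying on the general principle recorded at the start of Section~\ref{s: FEP} that a recursively axiomatizable variety with the FEP has a decidable universal theory. You have simply spelled out that principle in more detail than the paper does, including the observation that on a finite algebra the infinitely many iterated-conjugate instances collapse to finitely many checks, which makes membership of finite algebras in $\mathsf{CM_G}$ decidable.
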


We can actually prove the FEP for many more subvarieties of $\mathsf{SRL}$, unrelated to $\mathsf{GM_G}$, using a construction based on residuated frames.

An equation is called \emph{knotted} if it is of the form $x^m \leq x^n$, where $n \not = m$.
Also, we consider the following weak versions of commutativity.
For every $n \in \mathbb{Z}^+$ and non-constant \emph{partition} $a$ of $n+1$ (i.e., $a = (a_0, a_1, \ldots, a_n)$, where $a_0 + a_1 + \dots + a_n = n+1$ and not all $a_i$'s are $1$), we consider the ($n+1$)-variable identity ($a$):
$$xy_1 xy_2 \cdots y_n x = x^{a_0} y_1 x^{a_1} y_2 \cdots y_n x^{a_n}.$$
For example, ($2, 0$) is the identity $xyx = xxy$ and ($2, 0, 1$) is the identity $xyxzx = xxyzx$.
We call all of these identities \emph{weak commutativity} identities.

\begin{theorem}
    If a subvariety of $\mathsf{SRL}$ is axiomatized by a knotted identity, a weak commutativity identity and any additional (possibly empty) set of equations over $\{\vee, \cdot, 1 \}$, then it has the FEP.
\end{theorem}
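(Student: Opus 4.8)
The plan is to establish the FEP by the method of residuated frames, taking advantage of the cut-free hypersequent calculus for $\mathsf{SRL}$ set up above. Fix a subvariety $\mathsf{V}$ of $\mathsf{SRL}$ axiomatized over $\mathsf{SRL}$ by a knotted identity $x^{m} \leq x^{n}$ (with $m \neq n$), a weak-commutativity identity $(a)$, and a set $E$ of identities in the signature $\{\vee, \cdot, 1\}$. The first point is that each of these extra axioms is \emph{analytic} in the sense of the substructural hierarchy: a knotted identity and a weak-commutativity identity (the latter being a permutation identity, since its two sides are rearrangements of the same multiset of variable occurrences) are each equivalent to an analytic structural rule, and so is every identity over $\{\vee,\cdot,1\}$. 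Hence, extending $\mathbf{HFL}$ by the rules \textup{(URL1)} and \textup{(URL2)} together with the finitely many analytic structural rules $\mathcal{R}$ corresponding to the axioms of $\mathsf{V}$, we obtain, by \cite{ciabattoni2017algebraic}, a cut-free hypersequent calculus for $\mathsf{V}$.

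Given $\mathbf{A} \in \mathsf{V}$ and a finite $B \subseteq A$ with $1 \in B$, the next step is to build the residuated hyperframe $\mathbf{W}_{\mathbf{A},B} = (W, W', N, \circ, \varepsilon)$ attached to $\mathbf{A}$ and $B$: $W$ is the free monoid $B^{*}$ equipped with the evaluation homomorphism $h \colon B^{*} \to A$; $W'$ is the set of all hypersequents whose formulas come from $B$; $N$ holds between a word and a hypersequent exactly when $h$ validates that hypersequent in $\mathbf{A}$; and $\circ, \varepsilon$ are concatenation and the empty word. One checks that $\mathbf{W}_{\mathbf{A},B}$ is a residuated hyperframe closed under the structural rules in $\mathcal{R}$ as well as \textup{(URL1)} and \textup{(URL2)} — closure holds because $\mathbf{A}$ satisfies the associated equations — so its Galois (dual) algebra $\mathbf{W}^{+}$ is a complete residuated lattice lying in $\mathsf{V}$; in particular $\mathbf{W}^{+}$ is semiunilinear because it validates \textup{(URL1)} and \textup{(URL2)}. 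The usual quasi-embedding associated with a Gentzen frame then embeds the partial algebra $\mathbf{B}$ induced by $B$ into $\mathbf{W}^{+}$: it preserves the lattice operations because $\mathbf{B}$ is a partial sublattice of $\mathbf{A}$, and it carries every defined value of a product or residual in $\mathbf{B}$ to the corresponding value in $\mathbf{W}^{+}$, since $N$ records precisely the order of $\mathbf{A}$ between elements of $B$ and cut-freeness converts this into the needed equalities. This part runs parallel to the known FEP proofs for subvarieties of $\mathsf{FL}$ via residuated frames.

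It remains to show that $\mathbf{W}^{+}$ is finite. Since $\sim_N$-equivalent words lie in the same Galois-closed sets, the elements of $\mathbf{W}^{+}$ are $\sim_N$-saturated subsets of $B^{*}$, where $u \sim_N v$ means that $u$ and $v$ satisfy the same hypersequents in $W'$; hence it is enough to prove that $B^{*}/{\sim_N}$ is finite. This is exactly where the hypotheses on $\mathsf{V}$ are used: among the structural rules of the calculus we now have a knotted rule and a weak-commutativity rule, and over the \emph{finite} alphabet $B$ these force every word to be $\sim_N$-equivalent to a word from a fixed finite set. Roughly, the weak-commutativity rule supplies enough controlled shuffling of letters, and the knotted rule caps how many copies of a letter are needed, so that a suitable well-quasi-order on $B^{*}$ (of the kind used for commutative knotted residuated lattices) is compatible with $N$ and collapses $B^{*}$ to finitely many $\sim_N$-classes. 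Granting this, $\mathbf{W}^{+}$ is a finite member of $\mathsf{V}$ containing an isomorphic copy of $\mathbf{B}$, which is the FEP.

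I expect the finiteness step to be the main obstacle. In the fully commutative case the argument is standard, but a weak-commutativity identity provides only partial mobility of letters, so one must show carefully that the knotted rule still suffices to make $B^{*}/{\sim_N}$ finite; this requires analysing the interplay between the two families of structural rules and the frame relation $N$ (which also carries the $\{\vee,\cdot,1\}$-identities from $E$), and it is the only place where the precise form of the weak-commutativity axioms enters.
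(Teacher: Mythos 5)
Your overall strategy (residuated frames over the finitely generated submonoid/free monoid, embedding of the partial algebra, finiteness from the knotted plus weak-commutativity axioms) is in the same spirit as the paper, but as written your proposal has two genuine gaps, and they sit exactly at the points the paper actually has to work. First, the finiteness of $\mathbf{W}^{+}$ (equivalently, of $B^{*}/{\sim_N}$) is never established: you write ``Granting this'' and yourself flag it as the main obstacle. This is the crux of the theorem for non-commutative algebras with only a weak commutativity law, and it is not something that follows from ``a suitable well-quasi-order of the kind used for commutative knotted residuated lattices''; the paper discharges it by invoking the result of Cardona and Galatos \cite{cardona2017fep}, which proves precisely that the Galois algebra $\mathbf{W}_{\mathbf{A},\mathbf{B}}^{+}$ of the standard frame ($W$ the submonoid of $\mathbf{A}$ generated by $B$, $W' = W\times B\times W$, $x \mathrel{N} (y,b,z)$ iff $yxz\le b$) is finite under a knotted identity together with a weak commutativity identity. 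Without either citing that result or reproving it, your argument stops short of the theorem.

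Second, your claim that the dual algebra of the hyperframe built from an \emph{arbitrary} $\mathbf{A}\in\mathsf{V}$ validates \textup{(URL1)} and \textup{(URL2)} ``because $\mathbf{A}$ satisfies the associated equations,'' and hence is semiunilinear, is not justified. The unilinearity axioms are hypersequent-level (disjunctive, $P_3$-type) conditions; their preservation by frame/completion constructions is exactly the delicate point, and in general it requires restricting to (finitely) subdirectly irreducible algebras rather than arbitrary members of the variety. This is why the paper first reduces the FEP to the subdirectly irreducible, i.e.\ unilinear, members of $\mathsf{V}$ and then proves \emph{directly} that $\mathbf{W}_{\mathbf{A},\mathbf{B}}^{+}$ is unilinear: each Galois-closed set is an intersection of sets $\{(y,b,z)\}^{\triangleleft} = {\downarrow}(y\backslash b/z)$, hence is either all of $A$ or a linear downset, and incomparable closed sets then join to $A$ and meet to $\{\bot\}$. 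Your sketch contains neither the reduction to unilinear algebras nor any substitute argument for why the completed algebra stays in $\mathsf{SRL}$, so membership of $\mathbf{W}^{+}$ in $\mathsf{V}$ is unestablished. (By contrast, preservation of the $\{\vee,\cdot,1\}$-equations and the quasi-embedding of $\mathbf{B}$ are indeed standard, as you say, and are cited from \cite{galatos2013residuated} in the paper.)
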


\begin{proof}
If $\mathcal{V}$ is such a variety, it suffices to prove the FEP for the subdirectly irrducible algebras in $\mathcal{V}$; so it suffices to prove it for unilinear residuated lattices.
Let $\mathbf{A}$ be a unilinear residuated lattice in $\mathcal{V}$ and $\mathbf{B}$ be a finite partial subalgebra of $\mathbf{A}$.

Let $\m W$ be the submonoid of $\m A$ generated by $B$, $W' = W \times B \times W$ and let $N \subseteq W \times W'$ be defined by: $x \mathrel{N} (y,b,z)$ if $yxz \leq b$.
Then $\mathbf{W_{A, B}} = (W, W', N, \cdot, 1)$ is a residuated frame in the sense of \cite{galatos2013residuated} and the Galois algebra $\mathbf{W_{A, B}}^+ = ({\gamma_N}[\mathcal{P}(W)], \cap, \cup_{\gamma_N}, \cdot_{\gamma_N}, \gamma(\{1\}), \ld, \rd)$ is a residuated lattice, where $X \cup_\gamma Y = \gamma(X \cup Y)$, $X \cdot_\gamma Y = \gamma(X \cdot Y)$, $X \ld Y = \{z \in W : zX \subseteq Y\}$ and $Y \rd X = \{z \in W : Xz \subseteq Y\}$.
Moreover, \cite{galatos2013residuated} shows that $\mathbf{W_{A, B}}^+$ satisfies all $\{\vee, \cdot, 1 \}$-equations that $\m A$ satisfies and that $\m B$ embeds in $\mathbf{W_{A, B}}^+$.
Also, \cite{cardona2017fep}  shows that such $\mathbf{W_{A, B}}^+$ is finite, due to the knotted rule and the weak commutativity.
So it suffices to show that it is in $\mathsf{SRL}$; we will show that $\mathbf{W_{A, B}}^+$ is actually unilinear.

Note that for all $(y, b, z) \in W'$, we have $a \in \{(y,b,z) \}^{\triangleleft}$ iff $a \mathrel{N} (y,b,z)$ iff $yaz \leq b$ iff $a \leq y \ld b \rd z$.
Therefore, $\{(y,b,z) \}^{\triangleleft} = \downarrow (y \ld b \rd z)$.
By basic properties of Galois connections, every element $X$ of ${\gamma_N}[\mathcal{P}(W)]$ is an intersection of sets of the form $\{(y,b,z)\}^{\triangleleft}$; actually $X = \bigcap \{\{w\}^{\triangleleft}: w \in X^{\triangleright}\}$.
Therefore, $X$ is an intersection of principal downsets of $\m A$.
Since $\m A$ is unilinear, $X$ is either equal to $A$ itself or a linear downset of $A$. 

Now, let $X, Y \in {\gamma_N}[\mathcal{P}(W)]$; hence each of them is either equal to $A$ or a linear subset of $A$.
If $X \nsubseteq Y$ and $Y \nsubseteq X$, then none of them equals $A$, hence they are both linear downsets.
Since $X \nsubseteq Y$, there is an $x \in X$ such that $x \not \in Y$.
Since, $Y \nsubseteq X$, not every element of $Y$ is below $x$, so there exists $y \in Y$ with $y \not \leq x$.
Since $x \not \in Y$ and $Y$ is a downset, we get $x \not \leq y$; therefore in this case $\m A$ is not linear.
By unilinearity of $\m A$, it has a top $\top$ and $\top = x \jn y \in X \cup_\gamma Y$, which is also a downset; hence $X \cup_\gamma Y = A$.
Also, if $z \in X \cap Y$, then $z \leq x, y$ and by the unilinearity of $\m A$, we get $z = \bot$; so $ X \cap Y = \{\bot\}$.
Consequently, ${\gamma_N}[\mathcal{P}(W)]$ is unilinear.
\end{proof}

Note that all knotted identities and all weak commutativity identities are equations over $\{\vee, \cdot, 1 \}$.
So, the theorem includes cases where multiple knotted and/or multiple weak commutativity equations are included in the axiomatization.

\begin{corollary}
    If a subvariety of $\mathsf{SRL}$ is axiomatized by a knotted identity, a weak commutativity identity and any (possibly empty) set of equations over $\{\vee, \cdot, 1 \}$, then its universal theory is decidable.
\end{corollary}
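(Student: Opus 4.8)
The plan is to combine the FEP just established with the standard observation, recalled at the start of this section, that a recursively axiomatizable variety with the FEP has a decidable universal theory. Let $\mathcal{V}$ be a subvariety of $\mathsf{SRL}$ of the stated form; by the preceding theorem it has the FEP, and I will assume its axiomatization is recursive (automatic when the extra set of $\{\jn,\cdot,1\}$-equations is finite or empty), so that $\mathcal{V}$ is recursively axiomatizable. To get decidability of its universal theory it suffices to show that both the set of universal sentences valid in $\mathcal{V}$ and the set of those failing in $\mathcal{V}$ are recursively enumerable. The first is routine: validity in $\mathcal{V}$ is first-order consequence from the residuated-lattice axioms together with the recursive defining equations of $\mathcal{V}$, and one enumerates such consequences by enumerating proofs in a complete calculus for first-order logic.

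The content lies in the second claim, which is where the FEP is used. If $\forall \bar{x}\,\varphi(\bar{x})$, with $\varphi$ quantifier-free, fails in some $\m{A}\in\mathcal{V}$ witnessed by a tuple $\bar{a}$, let $B$ be the finite set of values in $\m{A}$ of all subterms occurring in $\varphi(\bar{a})$; the partial subalgebra of $\m{A}$ induced by $B$ embeds, by the FEP of $\mathcal{V}$, into a finite $\m{C}\in\mathcal{V}$. Since the embedding preserves every operation wherever it is defined on $B$, the image of $\bar{a}$ still witnesses $\neg\varphi$ in $\m{C}$, so $\forall\bar{x}\,\varphi$ already fails in a finite member of $\mathcal{V}$; the converse is trivial. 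Hence one enumerates the failing universal sentences by running through all finite algebras in the signature, retaining those satisfying the recursively listed defining equations of $\mathcal{V}$ (a decidable test on a finite algebra), and for each retained algebra enumerating the universal sentences that fail in it (again decidable).

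With both the valid universal sentences and their complement recursively enumerable, the universal theory of $\mathcal{V}$ is decidable. I expect no real obstacle: all the substance is already in the FEP theorem preceding the corollary. The only point to flag is the recursiveness hypothesis on the arbitrary $\{\jn,\cdot,1\}$-equations — the FEP needs no such assumption, but decidability of the universal theory does, and it holds in particular whenever that set is finite.
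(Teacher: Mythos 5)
Your overall route is the same as the paper's: the corollary is intended to follow immediately from the FEP theorem together with the remark opening this section, namely that recursively axiomatizable varieties with the FEP have a decidable universal theory, and your two enumerations (valid universal sentences via first-order proofs from the axioms; refutable ones via finite counterexamples, using the FEP to transfer a failing valuation to a finite member of the variety) are exactly that argument spelled out. The counterexample-transfer step, taking $B$ to be the values of all subterms, is fine.

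There is, however, one step that fails as written: the parenthetical claim that retaining exactly those finite algebras that satisfy ``the recursively listed defining equations of $\mathcal{V}$'' is a decidable test. That is only true when the list is finite, and here it is not: quite apart from the extra $\{\jn, \cdot, 1\}$-equations, $\mathsf{SRL}$ itself is axiomatized by the infinitely many conjugate equations of Corollary~\ref{c: SRLax}, and verifying that a finite algebra satisfies an infinite recursive set of equations is in general only co-recursively enumerable; with that reading your set of refutable sentences comes out $\Sigma^0_2$, which does not give decidability. The gap is repairable: a finite algebra belongs to $\mathsf{SRL}$ if and only if each of its (finitely many) subdirectly irreducible quotients is unilinear --- this uses Theorem~\ref{semiunilinear_FSI} together with $\mathsf{URL} \subseteq \mathsf{SRL}$ --- and this is a decidable property of a finite algebra; the knotted and weak commutativity axioms are single equations; so provided the additional $\{\jn, \cdot, 1\}$-equations form a finite set (or at least one whose satisfaction by finite algebras is decidable), the finite members of $\mathcal{V}$ form a recursive set and your argument closes. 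Your final caveat about recursiveness points in the right direction but is not strong enough: recursiveness of the extra set makes the valid sentences recursively enumerable, but it does not by itself make your finite-membership test effective, and the same issue already has to be addressed for the infinite axiomatization of $\mathsf{SRL}$ itself.
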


\section{Constructing Compact URLs}\label{s: compact}

A unilinear residuated lattice $\m R$ is called \emph{compact} if it is \emph{$\top$-unital} (i.e., it satisfies: $x = \overline{\bot} \text{ or } x \overline{\top} = \overline{\top} = \overline{\top} x$) and $R \setminus \{\top, \bot\}$ is closed under multiplication.
In other words, non-linear compact URLs are obtained by a partially-ordered monoid $\m M$ that is a union of chains by adding bounds that absorb all elements of $M$.
We will provide some constuctions of compact URLs, but first we start by giving an axiomatization.

\begin{lemma}
The class of compact URLs is axiomatized by the sentences
 $\forall x (x = \overline{\bot} \text{ or } x \overline{\top} = \overline{\top} = \overline{\top} x)$ and
$\forall x, y, z \, ( x = \overline{\top} \text{ or } x (y \wedge z) = xy \wedge xz)$.
\end{lemma}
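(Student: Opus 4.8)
The plan is to prove that, among unilinear residuated lattices, the compact ones are exactly those satisfying the two displayed sentences; write $(\star_1)$ and $(\star_2)$ for them. The sentence $(\star_1)$ is literally the $\top$-unitality axiom introduced in Section~\ref{s: axiom}, so a unilinear residuated lattice satisfies $(\star_1)$ iff it is $\top$-unital, and the remaining work is to match $(\star_2)$ with the requirement that $M := R \setminus \{\bot, \top\}$ be closed under multiplication. Both $(\star_1)$ and $(\star_2)$ have the shape $\overline{\Phi}$ for a sentence $\Phi$ in the language with the two bound constants, so Lemma~\ref{l: Phi} lets me split off the linear case: a linear unilinear residuated lattice has no incomparable elements, hence satisfies both $(\star_1)$ and $(\star_2)$ vacuously and is, in the sense consistent with the convention on $\overline{\bot}$ and $\overline{\top}$, a (degenerate) compact unilinear residuated lattice. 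A non-linear unilinear residuated lattice $\m R$ is automatically bounded, say by $\bot \leq \top$, and there, since incomparable elements join to $\top$ and meet to $\bot$, Lemma~\ref{l: Phi} shows $(\star_2)$ is equivalent to the demand that $x(y \wedge z) = xy \wedge xz$ for every non-top $x$ and all $y, z \in R$.

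So it remains to show, for a fixed non-linear unilinear residuated lattice $\m R$ with $M = R \setminus \{\bot, \top\}$, that $\m R$ is $\top$-unital with $M$ closed under multiplication if and only if $\m R$ is $\top$-unital and $x(y \wedge z) = xy \wedge xz$ whenever $x \neq \top$. I will use the following facts about a non-linear unilinear residuated lattice: incomparable elements are non-bound and, by unilinearity, they meet to $\bot$ and join to $\top$; conversely, two non-bound elements whose join is $\top$ are incomparable; $\m R$ is a disjoint union of at least two nonempty totally incomparable chains, so every element of $M$ has an incomparable partner in $M$; left multiplication $\lambda_x(a) = xa$ is residuated, hence preserves existing joins; and multiplication is order-preserving. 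I will also use that a non-linear $\top$-unital unilinear residuated lattice is rigorously compact, so $uv = \bot$ forces $u = \bot$ or $v = \bot$.

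For the forward implication, assume $\m R$ is $\top$-unital with $M$ closed under multiplication and fix $x \neq \top$, $y, z \in R$. If $x = \bot$ both sides equal $\bot$. If $x \in M$ and $y, z$ are comparable, say $y \leq z$, then $xy \leq xz$, so $x(y \wedge z) = xy = xy \wedge xz$. If $x \in M$ and $y, z$ are incomparable, then $y, z \in M$, so $x(y \wedge z) = x\bot = \bot$, while $xy \vee xz = x(y \vee z) = x\top = \top$ (here $x \neq \bot$ together with $\top$-unitality gives $x\top = \top$); since $xy, xz \in M$ by closure and they join to $\top$, they are incomparable, hence $xy \wedge xz = \bot$, and equality holds. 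For the converse, assume $\m R$ is $\top$-unital and that $x(y \wedge z) = xy \wedge xz$ whenever $x \neq \top$, and fix $x, y \in M$. Then $xy \neq \bot$ by rigorous compactness. If $xy = \top$, pick $w \in M$ incomparable to $y$; then $y \wedge w = \bot$, so $\bot = x(y \wedge w) = xy \wedge xw = \top \wedge xw = xw$, contradicting $x, w \neq \bot$. Hence $xy \in M$, so $M$ is closed under multiplication. Together with the identification of $(\star_1)$ with $\top$-unitality and the linear case, this gives the lemma.

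The one step that I expect to require genuine care is the converse in the non-linear case, and within it the exclusion of $xy = \top$: this is exactly where the meet-distributivity of non-top elements must be combined with the failure of nilpotence supplied by $\top$-unitality (rigorous compactness). The rest — reading $(\star_1)$ off as the $\top$-unitality axiom, reducing $(\star_2)$ to its content at non-top elements via Lemma~\ref{l: Phi}, and the elementary behaviour of meets, joins and incomparability in a unilinear lattice — is bookkeeping.
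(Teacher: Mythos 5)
Your proof is correct and follows essentially the same route as the paper's: identify the first sentence with $\top$-unitality, reduce to showing that for non-linear $\top$-unital URLs the second sentence is equivalent to closure of $M=R\setminus\{\bot,\top\}$ under multiplication, argue the forward direction by cases on comparability of $y,z$, and refute $xy=\top$ by multiplying into the meet with an element incomparable to $y$. The only (cosmetic) differences are that you invoke rigorous compactness to exclude $xy=\bot$ where the paper computes $ab\top=\top$ directly, and that by choosing the incomparable partner of the right-hand factor $y$ you avoid the paper's ``without loss of generality'' step, which is a slight tidying since the second sentence only asserts left meet-distributivity.
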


\begin{proof}
By the definition of compactness, it suffices to show that, for every $\top$-unital non-linear unilinear residuated lattice $\m R$, the second formula captures the fact that $R \setminus \{\top, \bot\}$ is closed under multiplication.
Note that if $a, b \not \in \{\top, \bot\}$, then $ab \top = a \top = \top$, so $ab \not = \bot$.  

Assume first that $\m R$ satisfies the second formula, but there exist $a_1, a_2 \in R \setminus \{\bot, \top\}$ such that $a_1 a_2 = \top$.
Since $\m R$ is not linear, there exists an element $a_3$ that is incomparable to $a_1$ or to $a_2$; without loss of generality, $a_3$ is incomparable to $a_2$, so $a_3 \in R \setminus \{\bot, \top\}$.
Hence
\begin{align*}
    \bot = a_1 \bot = a_1 (a_2 \wedge a_3) = a_1 a_2 \wedge a_1 a_3 = \top \wedge a_1 a_3 = a_1 a_3,
\end{align*}
a contradiction.
Thus $R \setminus \{\top, \bot\}$ is closed under multiplication.

Now assume $R \setminus \{\top, \bot\}$ is closed under multiplication and that $x, y, z \in R$ with $x \not = \top$.
If $x = \bot$, then the formula holds, so we assume that $x \not = \bot$. Also, if $y$ and $z$ are comparable, then $x (y \wedge z) = xy \wedge xz$ holds since multiplication preserves the order; so we assume that  $y$ and $z$ are incomparable.
In this case, $xy \vee xz =x (y \vee z) = x \top = \top$.
Since $R \setminus \{\top, \bot\}$ is closed under multiplication, $xy$ and $xz$ are incomparable, hence $x (y \wedge z) = x \cdot \bot = \bot = xy \wedge xz$.
\end{proof}

It follows that an alternative second formula is $\forall x,y,z \, ( x = \overline{\top} \text{ or } (y \wedge z) x = yx \wedge zx)$.

\begin{corollary}
The variety generated by the class of compact URL is axiomatized by
\begin{align*}
    1 = & \gamma_1(u \ld v) \jn \gamma_2(v \ld u) \jn \gamma_3(x \ld (u \mt v)) \jn \gamma_4((u \jn v) \ld (x(u \jn v) \mt (u \jn v)x))\\
    1 = & \gamma_5(u \ld v) \jn \gamma_6(v \ld u) \jn \gamma_7((u \jn v) \ld x) \jn \gamma_8((xu \mt xv) \ld x(u \mt v))
\end{align*}   
where $\gamma_1$,$\gamma_2$, $\gamma_3$, $\gamma_4$, $\gamma_5$,  $\gamma_6$,$\gamma_7$, $\gamma_8 \in \Gamma(Var)$.
\end{corollary}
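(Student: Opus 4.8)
The plan is to argue exactly along the lines of the proof of Proposition~\ref{p: M_G axioms}. By the Lemma just proved, the class of compact URLs is cut out of $\mathsf{URL}$ by the two positive universal sentences $\Phi_1 := \forall x\,(x = \overline{\bot} \text{ or } x\overline{\top} = \overline{\top} = \overline{\top}x)$ and $\Phi_2 := \forall x,y,z\,(x = \overline{\top} \text{ or } x(y\mt z) = xy\mt xz)$. Taking $\phi = \{\Phi_1,\Phi_2\}$, Theorem~\ref{semiunilinear_FSI} identifies the finitely subdirectly irreducible members of the variety generated by compact URLs with the unilinear residuated lattices satisfying $\phi$, so this variety coincides with $\mathsf{SRL}\cap\mathsf{V}_\phi$; the task then reduces to running the translation of \cite{galatos2004equational} on $\phi$, once $\Phi_1$ and $\Phi_2$ are put into the disjunction-of-inequalities form $1 \le p_1 \text{ or }\cdots\text{ or } 1 \le p_n$ that the translation requires.

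For $\Phi_1$ I would use that in the non-linear unilinear members the symbols $\overline{\bot}$ and $\overline{\top}$ denote the actual bounds of the lattice, so that $x = \overline{\bot}$ is equivalent to $x \le \overline{\bot}$ and $x\overline{\top} = \overline{\top} = \overline{\top}x$ is equivalent to $\overline{\top} \le x\overline{\top}\mt\overline{\top}x$, the reverse inequalities holding automatically because $\overline{\top}$ is the top. Unfolding the bar notation and moving everything across a residual, $\Phi_1$ becomes $\forall u,v,x\,(1 \le u\ld v \text{ or } 1 \le v\ld u \text{ or } 1 \le x\ld(u\mt v) \text{ or } 1 \le (u\jn v)\ld(x(u\jn v)\mt(u\jn v)x))$, and the translation of \cite{galatos2004equational} then delivers the first displayed equation. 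Likewise, $x = \overline{\top}$ rewrites as $\overline{\top} \le x$, and, since multiplication is order-preserving, $x(y\mt z) = xy\mt xz$ rewrites as the single inequality $xy\mt xz \le x(y\mt z)$.

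The only non-mechanical step is handling the extra variables of $\Phi_2$. I would first show that over unilinear residuated lattices $\Phi_2$ is equivalent to the three-variable formula $\forall u,v,x\,(u\le v \text{ or } v\le u \text{ or } u\jn v\le x \text{ or } xu\mt xv\le x(u\mt v))$: when $y,z$ are comparable the meet-identity is automatic, and when they are incomparable they constitute an incomparable pair and hence may be taken to be the auxiliary variables $u,v$ used to express $\overline{\top}$; conversely, instantiating the three-variable formula at incomparable $u,v$ and at $x \ne \overline{\top}$ recovers the pertinent instance of the meet-identity, both sides collapsing to $\overline{\bot}$. Since by Theorem~\ref{semiunilinear_FSI} the relevant finitely subdirectly irreducibles are unilinear, this substitution leaves the generated variety unchanged, and feeding the three-variable formula through the \cite{galatos2004equational} translation yields the second displayed equation.

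I expect the main obstacle to be precisely this reduction: establishing the equivalence of $\Phi_2$ with its three-variable form over $\mathsf{URL}$ and checking that each passage from an equation to a single inequality (both for $\Phi_1$ and for $\Phi_2$) is genuinely reversible in the algebras under consideration. A secondary point to settle, exactly as in the setting of Proposition~\ref{p: M_G axioms}, is that the displayed equations --- which plainly hold throughout the variety generated by compact URLs --- already force the semiunilinearity equations of Corollary~\ref{c: SRLax} on finitely subdirectly irreducible algebras, so that no separate copy of the axioms of $\mathsf{SRL}$ needs to be adjoined; once these points are in place, the conjugate-term translation is routine.
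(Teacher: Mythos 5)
Your route is the intended one (the paper gives no separate argument: it reads the corollary off the preceding lemma via Theorem~\ref{semiunilinear_FSI} and the translation of \cite{galatos2004equational}), and the genuinely non-mechanical part of your write-up is handled correctly: the passage from the five-variable sentence $\Phi_2$ to the three-variable form in which the distributed pair is the incomparable pair $u,v$ itself is exactly what is needed to match the second displayed scheme, and each of your equation-to-inequality conversions is reversible for the reasons you give.

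The step you defer, however, is a genuine gap and cannot be settled the way you expect. Any residuated lattice that satisfies a positive universal sentence $1 \le p_1 \text{ or } \cdots \text{ or } 1 \le p_n$ already satisfies every associated conjugate equation: under any assignment some $p_i$ is $\ge 1$, so $\gamma_i(p_i) = 1$, while all other joinands are $\le 1$. Hence to refute the claim that the two displayed schemes alone force semiunilinearity it suffices to exhibit an FSI, non-unilinear residuated lattice satisfying the two three-variable sentences. Take $\m M_{\mathbb{Z}_2} = \{\bot, 1, a, \top\}$ (so $a^2 = 1$) and adjoin a new top $T$ with $Tx = xT = T$ for $x \ne \bot$ and $T\bot = \bot T = \bot$. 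This five-element algebra is a residuated lattice (finite, and multiplication distributes over the only non-chain join $1 \jn a = \top$), it is simple because its negative cone is $\{\bot, 1\}$, and it is not unilinear since $1$ and $a$ are incomparable while $1 \jn a = \top < T$. Yet for the unique incomparable pair $1, a$ both sentences hold: every $x \ne \bot$ satisfies $\top \le x\top \mt \top x$, and every $x$ with $\top \nleq x$ satisfies $x{\cdot}1 \mt x{\cdot}a = \bot \le x\bot$. So this algebra satisfies all the displayed equations but lies outside $\mathsf{SRL}$, hence outside the variety generated by compact URLs. Consequently the conjugate equations can only axiomatize that variety \emph{relative to} $\mathsf{SRL}$, in the style of Corollary~\ref{c: variety M_X}; your appeal to the precedent of Proposition~\ref{p: M_G axioms} does not help, since the same relativization issue is present there. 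To complete the proof you must either adjoin the equations of Corollary~\ref{c: SRLax} (and prove the relative statement), or accept that the absolute form of the corollary, as literally stated, is not obtainable by this (or any) argument.
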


\begin{lemma}\label{l: compact url}
If $\m R$ is a compact URL, then the comparability relation $\equiv$ on $\mathbf{M}$, where $M = R \setminus \{\bot, \top\}$, is a congruence relation and the quotient monoid $\mathbf{M} / {\equiv}$ is cancellative.
Also, $[1]_{\equiv}$ defines a totally-ordered submonoid of $\m M$.
\end{lemma}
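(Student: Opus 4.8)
The plan is to verify each of the three claims in turn, using the defining properties of a compact URL: every nonbound element acts as a unit for $\top$ (so in particular $M = R\setminus\{\bot,\top\}$ is closed under multiplication, by the previous lemma), the bounds absorb all of $M$, and $M$ itself is a union of pairwise incomparable chains. Throughout, I write $\equiv$ for the comparability relation on $M$: $x\equiv y$ iff $x\leq y$ or $y\leq x$. Since $M$ is a disjoint union of incomparable chains, $\equiv$ is clearly an equivalence relation on $M$ whose classes are exactly those chains.

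First I would show $\equiv$ is a monoid congruence on $\mathbf M$. Because $\equiv$ is symmetric it suffices to show that $x\leq y$ implies $zx\equiv zy$ and $xz\equiv yz$ for all $z\in M$; and since multiplication is order-preserving, $x\leq y$ actually gives $zx\leq zy$ and $xz\leq yz$, so this is immediate. Hence $\equiv$ is compatible with multiplication, and $\mathbf M/{\equiv}$ is a monoid. Next, cancellativity of $\mathbf M/{\equiv}$: suppose $[z][x] = [z][y]$ in $\mathbf M/{\equiv}$, i.e.\ $zx\equiv zy$. I want $x\equiv y$. Suppose not; then $x$ and $y$ are incomparable in $M$, so $x\vee y=\top$ in $\mathbf R$. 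Multiplying on the left by $z\in M$ and using that multiplication distributes over joins (Corollary~\ref{c: RLjoin}), $zx\vee zy = z(x\vee y) = z\top = \top$, since $z$ acts as a unit for $\top$. But $zx\equiv zy$ means $zx$ and $zy$ are comparable, so $zx\vee zy\in\{zx,zy\}\subseteq M$, contradicting $zx\vee zy=\top\notin M$. Hence $x\equiv y$, giving left cancellativity; right cancellativity is symmetric (using the dual alternative second formula noted after the axiomatization lemma, or just the join-distributivity of right multiplication). So $\mathbf M/{\equiv}$ is cancellative.

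Finally, $[1]_\equiv$ is the $\equiv$-class of $1$, i.e.\ the set of elements of $M$ comparable to $1$; as a $\equiv$-class it is one of the incomparable chains, so it is totally ordered. It is closed under multiplication: if $a,b\equiv 1$ then $ab\equiv 1\cdot 1 = 1$ by the congruence property, and it contains $1$, so it is a submonoid of $\mathbf M$, and it inherits the order of $M$, which is total on this class. I do not expect any genuine obstacle here; the only point requiring a little care is the cancellativity argument, where one must remember to invoke join-distributivity of multiplication and the $\top$-unitality to push the join of two incomparable elements up to $\top$ and derive the contradiction. Everything else is a routine unwinding of the definitions of comparability and of compact URL.
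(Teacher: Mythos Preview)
Your proof is correct and follows essentially the same route as the paper: order-preservation of multiplication gives that $\equiv$ is a congruence, join-distributivity together with $\top$-unitality shows that incomparable inputs produce incomparable outputs (you phrase this as the contrapositive, but it is the same argument), and the congruence property immediately gives that $[1]_\equiv$ is a totally ordered submonoid. One minor remark: the reference to Corollary~\ref{c: RLjoin} is slightly off, since that corollary concerns complete lattice-ordered monoids; the fact you actually use---that multiplication distributes over binary joins in any residuated lattice---is the standard consequence of residuation.
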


\begin{proof}
That the comparability relation $\equiv$ is a congruence on $\mathbf{M}$ follows from the order-preservation of multiplication and the unilinear order.
For the cancellativity of $\mathbf{M} / {\equiv}$, note that if for $x, y, z \in M$ and $y \parallel z$, we have $\top = x (y \vee z) = xy \vee xz$, and since $\mathbf{M}$ is closed under multiplication, we get $xy \parallel xz$.
Finally, $[1]_{\equiv}$ is a totally-ordered submonoid of $\m M$ since $x \equiv 1$ and $y \equiv 1$ implies $xy \equiv 1 \cdot 1 = 1$.
\end{proof}

\subsection{From a finite cyclic monoid}\label{s: fin. cyclic monoid}

We show how to construct a compact URL starting from a finite cyclic monoid. 

Given a finite cyclic monoid $\mathbf{M}$ generated by an element $a$ of $M$, there is a smallest natural number $r$, called the \emph{index}, such that $a^r = a^{r+s}$ for some  positive integer $s$; the smallest such $s$ then is called the \emph{period}.
So $M = \{1, a, \dots, a^r, \dots, a^{r+s-1}\}$ and $|M|=r+s$.
Note that every natural number $n > r$ can be written as $n = r+ms+k$ for unique $m \in \mathbb{N}$ and $0 \leq k < s$; we define $[n]_r^s := r+k$ for $n \geq r+s$ and $[n]_r^s:= n$ for $0 \leq n < r+s$.
(We will write $[n]$, when $r,s$ are clear from the context.)
Then the multiplication on $\m M$ is given by $a^i \cdot a^j = a^{[i+j]_r^s}$.

In particular, $\{a^r, \dots, a^{r+s-1}\}$ is a subsemigroup of $\m M$ and it is a group in its own right with identity element $a^t$ such that $t \equiv 0 \, (\text{mod } s)$; so it is isomorphic to $\mathbb{Z}_s$.

We extend the multiplication of $\m M$ to the set $R = M \cup \{\bot, \top\}$ by $\bot x = x \bot = \bot$ for all $x \in R$, and $\top x = x \top = \top$ for all $x \not = \bot$.
Also we define an order on $R$ by $\bot \leq x \leq \top$ for all $x \in R$ and $a^i \leq a^j$ if and only if $j = i+ns$ for some $n \in \mathbb{N}$, where $0 \leq i, j \leq r+s-1$; see Figure~\ref{fin.cyclicmonid}(left).
It is easy to see that this yields a unilinear lattice order; we denote by $\m R_{\m M}$ the resulting lattice-ordered monoid.

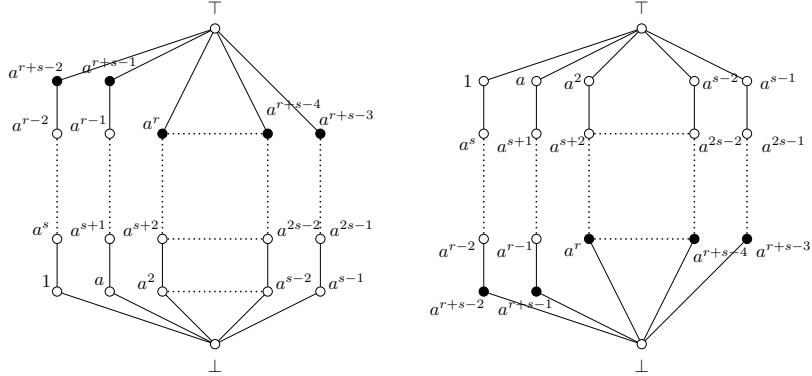
\begin{figure}[ht]
\centering
\scalebox{0.7}{
\begin{tikzpicture}
\draw (3, 3) -- (0, 2) -- (0, 1);
\draw [thick, dotted] (0, 1) -- (0, -1);
\draw (0, -1) -- (0, -2) -- (3, -3);
\draw (3, 3) -- (1, 2) -- (1, 1);
\draw [thick, dotted] (1, 1) -- (1, -1);
\draw (1, -1) -- (1, -2) -- (3, -3);
\draw (3, 3) -- (2, 1);
\draw [thick, dotted] (2, 1) -- (2, -1);
\draw (2, -1) -- (2, -2) -- (3, -3);
\draw [thick, dotted] (2, 1) -- (4, 1);
\draw [thick, dotted] (2, -1) -- (4, -1);
\draw [thick, dotted] (2, -2) -- (4, -2);
\draw (3, 3) -- (4, 1);
\draw [thick, dotted] (4, 1) -- (4, -1);
\draw (4, -1) -- (4, -2) -- (3, -3);
\draw (3, 3) -- (5, 1);
\draw [thick, dotted] (5, 1) -- (5, -1);
\draw (5, -1) -- (5, -2) -- (3, -3);

\filldraw [color = black, fill = white] (3, 3) circle (2.5pt)
    (3, 3.4) node {$\top$};
\filldraw [color = black, fill = white] (3, -3) circle (2.5pt)
    (3, -3.4) node {$\bot$};
\filldraw [color = black, fill = black] (0, 2) circle (2.5pt)
    (-0.4, 2.2) node {$a^{r+s-2}$};
\filldraw [color = black, fill = white] (0, 1) circle (2.5pt)
    (-0.5, 1.2) node {$a^{r-2}$};
\filldraw [color = black, fill = white] (0, -1) circle (2.5pt)
    (-0.3, -0.8) node {$a^s$};
\filldraw [color = black, fill = white] (0, -2) circle (2.5pt)
    (-0.2, -1.8) node {$1$};
    
\filldraw [color = black, fill = black] (1, 2) circle (2.5pt)
    (1, 2.3) node {$a^{r+s-1}$};
\filldraw [color = black, fill = white] (1, 1) circle (2.5pt)
    (0.6, 1.2) node {$a^{r-1}$};
\filldraw [color = black, fill = white] (1, -1) circle (2.5pt)
    (0.6, -0.8) node {$a^{s+1}$};
\filldraw [color = black, fill = white] (1, -2) circle (2.5pt)
    (0.8, -1.8) node {$a$};
    
\filldraw [color = black, fill = black] (2, 1) circle (2.5pt)
    (1.8, 1.2) node {$a^r$};
\filldraw [color = black, fill = white] (2, -1) circle (2.5pt)
    (1.6, -0.8) node {$a^{s+2}$};
\filldraw [color = black, fill = white] (2, -2) circle (2.5pt)
    (1.7, -1.8) node {$a^2$};
    
\filldraw [color = black, fill = black] (4, 1) circle (2.5pt)
    (4.5, 1.5) node {$a^{r+s-4}$};
\filldraw [color = black, fill = white] (4, -1) circle (2.5pt)
    (4.6, -0.8) node {$a^{2s-2}$};
\filldraw [color = black, fill = white] (4, -2) circle (2.5pt)
    (4.5, -1.8) node {$a^{s-2}$};
    
\filldraw [color = black, fill = black] (5, 1) circle (2.5pt)
    (5.5, 1.3) node {$a^{r+s-3}$};
\filldraw [color = black, fill = white] (5, -1) circle (2.5pt)
    (5.6, -0.8) node {$a^{2s-1}$};
\filldraw [color = black, fill = white] (5, -2) circle (2.5pt)
    (5.5, -1.8) node {$a^{s-1}$};
\end{tikzpicture}
\qquad
\begin{tikzpicture}
\draw (3, 3) -- (0, 2) -- (0, 1);
\draw [thick, dotted] (0, 1) -- (0, -1);
\draw (0, -1) -- (0, -2) -- (3, -3);
\draw (3, 3) -- (1, 2) -- (1, 1);
\draw [thick, dotted] (1, 1) -- (1, -1);
\draw (1, -1) -- (1, -2) -- (3, -3);
\draw (3, 3) -- (2, 2) -- (2, 1);
\draw [thick, dotted] (2, 1) -- (2, -1);
\draw (2, -1) -- (3, -3);
\draw [thick, dotted] (2, 1) -- (4, 1);
\draw [thick, dotted] (2, -1) -- (4, -1);
\draw (3, 3) -- (4, 2) -- (4, 1);
\draw [thick, dotted] (4, 1) -- (4, -1);
\draw (4, -1) -- (3, -3);
\draw (3, 3) -- (5, 2) -- (5, 1);
\draw [thick, dotted] (5, 1) -- (5, -1);
\draw (5, -1) -- (3, -3);

\filldraw [color = black, fill = white] (3, 3) circle (2.5pt)
    (3, 3.4) node {$\top$};
\filldraw [color = black, fill = white] (3, -3) circle (2.5pt)
    (3, -3.4) node {$\bot$};
\filldraw [color = black, fill = white] (0, 2) circle (2.5pt)
    (-0.3, 2) node {$1$};
\filldraw [color = black, fill = white] (0, 1) circle (2.5pt)
    (-0.3, 0.8) node {$a^s$};
\filldraw [color = black, fill = white] (0, -1) circle (2.5pt)
    (-0.5, -1.2) node {$a^{r-2}$};
\filldraw [color = black, fill = black] (0, -2) circle (2.5pt)
    (-0.5, -2.3) node {$a^{r+s-2}$};
    
\filldraw [color = black, fill = white] (1, 2) circle (2.5pt)
    (0.7, 2) node {$a$};
\filldraw [color = black, fill = white] (1, 1) circle (2.5pt)
    (0.6, 0.8) node {$a^{s+1}$};
\filldraw [color = black, fill = white] (1, -1) circle (2.5pt)
    (0.6, -1.2) node {$a^{r-1}$};
\filldraw [color = black, fill = black] (1, -2) circle (2.5pt)
    (0.8, -2.3) node {$a^{r+s-1}$};
    
\filldraw [color = black, fill = white] (2, 2) circle (2.5pt)
    (1.7, 2) node {$a^2$};
\filldraw [color = black, fill = white] (2, 1) circle (2.5pt)
    (1.6, 0.8) node {$a^{s+2}$};
\filldraw [color = black, fill = black] (2, -1) circle (2.5pt)
    (1.7, -1.2) node {$a^r$};
    
\filldraw [color = black, fill = white] (4, 2) circle (2.5pt)
    (4.5, 2) node {$a^{s-2}$};
\filldraw [color = black, fill = white] (4, 1) circle (2.5pt)
    (4.5, 0.8) node {$a^{2s-2}$};
\filldraw [color = black, fill = black] (4, -1) circle (2.5pt)
    (4.5, -1.3) node {$a^{r+s-4}$};
    
\filldraw [color = black, fill = white] (5, 2) circle (2.5pt)
    (5.6, 2) node {$a^{s-1}$};
\filldraw [color = black, fill = white] (5, 1) circle (2.5pt)
    (5.7, 0.8) node {$a^{2s-1}$};
\filldraw [color = black, fill = black] (5, -1) circle (2.5pt)
    (5.7, -1.2) node {$a^{r+s-3}$};
\end{tikzpicture}
}
\caption{The two URLs based on a finite cyclic monoid}
\label{fin.cyclicmonid}
\end{figure}

\begin{theorem}
If $\m M$ is a finite cyclic monoid, then $\mathbf{R}_{\m M}$ is the reduct of a residuated lattice.
\end{theorem}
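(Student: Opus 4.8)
The plan is to invoke Corollary~\ref{c: RLjoin}: the lattice $\m R_{\m M}$ is finite, hence complete, so it is the reduct of a residuated lattice precisely when multiplication distributes over arbitrary joins. Since the cyclic monoid $\m M$ is commutative and $\bot,\top$ are central in $\m R_{\m M}$ by construction, $\m R_{\m M}$ is a commutative monoid, so it suffices to check $x\cdot\bot=\bot$ (which holds by definition and takes care of the empty join) together with $x(y\vee z)=xy\vee xz$ for all $x,y,z\in R$ with $y\neq z$.

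First I would dispose of the routine cases. If $y$ and $z$ are comparable, say $y\le z$, then $y\vee z=z$ and the identity reduces to $xy\le xz$, which follows from order-preservation of multiplication on $\m R_{\m M}$; within $M$ this in turn follows from the fact that $[\,\cdot\,]_r^s$ is non-decreasing on each residue class modulo $s$ (equivalently, from $a^{j}=a^{i}a^{ns}$ whenever $a^{i}\le a^{j}$), and the cases involving the bounds are immediate. If one of $y,z$ lies in $\{\bot,\top\}$, or if $x\in\{\bot,\top\}$, the identity is again immediate. This leaves the essential case: $x,y,z\in M$ with $y$ and $z$ incomparable, where $y\vee z=\top$, and one must show $xy\vee xz=\top$, i.e.\ that $xy$ and $xz$ are incomparable in $M$.

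The one nontrivial point is therefore that multiplication by an element of $M$ preserves incomparability within $M$. Writing $x=a^{k}$, $y=a^{i}$, $z=a^{j}$, incomparability of $a^{i}$ and $a^{j}$ means exactly $i\not\equiv j\pmod{s}$, since $a^{i}$ and $a^{j}$ are comparable in $\m R_{\m M}$ iff $i\equiv j\pmod{s}$. Because $[n]_r^s\equiv n\pmod{s}$ for every $n$ — which is immediate from the definition of $[\,\cdot\,]_r^s$ — we get $[k+i]_r^s\not\equiv[k+j]_r^s\pmod{s}$, so $xy=a^{[k+i]_r^s}$ and $xz=a^{[k+j]_r^s}$ are incomparable in $M$, as needed. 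This step is where the specific description of the order on $\m R_{\m M}$ is used; everything else is bookkeeping, so I do not expect a serious obstacle. As an alternative one could instead apply Corollary~\ref{c: RLmax} and compute the residuals directly: for $a^{k},a^{l}\in M$ the set $\{a^{i}\in M:\ a^{k}a^{i}\le a^{l}\}$ consists of the $a^{i}$ with $i\equiv l-k\pmod{s}$ and $[k+i]_r^s\le l$, which all lie on one chain along which $[k+\,\cdot\,]_r^s$ is non-decreasing, so this set together with $\bot$ has a maximum; the remaining cases are handled by Remark~\ref{bottopdivison}. I expect the join-distributivity route to give the cleaner proof.
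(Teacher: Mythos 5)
Your proposal is correct and follows essentially the same route as the paper: both invoke Corollary~\ref{c: RLjoin}, reduce to binary joins, and split into the incomparable case (where $i\not\equiv j \pmod{s}$ is preserved under adding $k$ because $[n]_r^s\equiv n \pmod{s}$) and the comparable case (where monotonicity of $[\,\cdot\,]_r^s$ along a residue class, i.e.\ $[\ell+ns]=[\ell]+ms$, gives $xy\le xz$), the latter being exactly the paper's one-line case analysis. The only difference is cosmetic: you use commutativity to handle one side of distributivity, and you leave the (routine) associativity of the extended multiplication implicit, which the paper dispatches in a sentence.
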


\begin{proof}
    Since both $\top$ and $\bot$ are zero elements for $\mathbf{M}$ and $\bot \top = \top \bot = \bot$, the associativity of $\m M$ easily extends to the associativity of $\mathbf{R}_{\m M}$.
    Since $R$ is finite, by Corollary~\ref{c: RLjoin}, it suffices to show that multiplication distributes over binary joins; we will show distribution from the left: $x(y \vee z) = xy \vee xz$, for all $x, y, z \in R$.

    If any of $x, y, z$ is $\top$ or $\bot$, it easy to see that the equation holds, so we assume that $x, y, z \in M$: $x = a^i, y = a^j$ and $z = a^k$ for some $0 \leq i, j, k \leq r+s-1$.
    If $y = a^j$ and $z = a^k$ are incomparable, then $j \not \equiv k \, (\text{mod } s)$ by definition, so we have $i+j \not \equiv i+k \, (\text{mod } s)$ and hence $xy \parallel xz$.
    Thus we have $x(y \vee z) =x \top= \top= xy \vee xz$.
    If  $a^j = y \leq z = a_k$, we have $k = j+ns$ for some $0 \leq n \leq \lfloor(r+s-1-j)/s \rfloor$; we will show that $xy = a^{[i+j]} \leq a^{[i+j+ns]} = xz$.
    This is true since for $\ell = i+j$, we have $[\ell+ns] = [\ell] + ms$, where $m = n$ if $\ell+ns < r+s$ and $m = ([\ell+ns]-[\ell])/s$ if $\ell+ns \geq r+s$.
\end{proof}

The (commutative) residuated lattice based on $\mathbf{R}_{\m M}$ is compact so we have $\bot \ra x = \top =x \ra \top$, $\top \ra y = \bot$, $z \ra \bot = \bot $ for all $x \in R_M$, $y \not = \top$, $z \not = \bot$.
Also, the remaining implications can be easily calculated to be as follows: 
\begin{align*}
    a^i \rightarrow a^j =
    \begin{cases}
        \bot & \text{if } j < i \leq r  \text{ or } j< r \leq i \leq r+s-1\\
        a^{j-i} & \text{if } i \leq j < r\\
        a^{j-i+\lfloor \frac{r+s-1+i-j}{s} \rfloor s} & \text{if } i < r \leq j \leq r+s-1\\
        a^k & \text{if } r \leq i, j, k \leq r+s-1 \text{ and } a^i a^k = a^j.
    \end{cases}
\end{align*}
In particular, the subsemigroup $\{a^r, \dots, a^{r+s-1}\}$ is closed under implication, but $M$ is not.

It is easy to see that if we impose the dual order on the elements of $\m M$ instead, then we can obtain a different unilinear residuated lattice; see Figure~\ref{fin.cyclicmonid}(right).
Residuation in this second example works differently:
\begin{align*}
    a^i \rightarrow a^j =
    \begin{cases}
        a^{j-i} & \text{if } i \leq j \leq r+s-1\\
        a^{j-i+\lceil \frac{i-j}{s} \rceil s} & \text{if } j < i \leq r+s-1
    \end{cases}
\end{align*}
In this case, $M$ is closed under implication, but $\{a^r, \dots, a^{r+s-1}\}$ is not.

\begin{remark}
Actually, we can prove that given a finite cyclic monoid $M$, these are the only two ways where $M \cup \{\bot, \top\}$ is the monoid reduct of a compact unilinear residuated lattice.

Suppose $M \cup \{\bot, \top\}$ is the monoid reduct of a compact URL $\m R$.
Let $a^i$ and $a^j$ be distinct group elements in $M$.
If $a^i < a^j$, then $e = a^i a^k < a^j a^k$, where $e$ is the identity for the group elements in $M$ and $a^k$ is the inverse of $a^i$ in the group.
Then $e < a^j a^k < (a^j a^k)^2 < \cdots$, so $M$ contains an infinite ascending chain, contradicting the fact that $M$ is finite.
Thus the group elements in $M$ are pairwise incomparable.

We also observe that given $0 \leq i < j < r+s$,
\begin{equation}\tag{*}\label{eq: cyclic monoid *}
\begin{split}
    & a^i < a^j \text{ iff } \text{ for all } 0 \leq k \leq i, \, a^{i-k} < a^{j-k}\\
    & a^j < a^i \text{ iff } \text{ for all } 0 \leq k \leq i, \, a^{j-k} < a^{i-k}
\end{split}
\end{equation}
The backward direction is trivial, so we just show the forward direction.
Given $0 \leq i < j < r+s$ such that $a^i < a^j$ and $0 \leq k \leq i$, if $a^{i-k} \parallel a^{j-k}$, then $\top = a^k \top = a^k (a^{i-k} \jn a^{j-k}) = a^i \jn a^j$, so $a^i \parallel a^j$, a contradiction;
if $a^{i-k} > a^{j-k}$, then $a^i > a^j$ since multiplication is order-preserving and $a^i$ is distinct from $a^j$.

Finally, we know $1 \equiv e$, since otherwise we would have $\top = e(1 \vee e) = e \vee e^2 = e$, a contradiction.

Now let $t$ be the smallest natural number such that $a^t \equiv 1$.
If $t = 0$, then by (\ref{eq: cyclic monoid *}), $a^i \parallel a^j$ for all $0 \leq i < j < r+s$; otherwise $1 \equiv a^{j-i}$ where $j-i>0$, a contradiction.
Especially we have $e = 1$ in this case, so $M$ is a group and $\m R$ is based on $\m M_X$.
Now we assume $t > 0$.
If $1 < a^t$, then we have $a^r \leq a^{r+t}$ and both of them are group elements in $M$.
Since all group elements are pairwise incomparable, we know $a^r = a^{r+t}$, so $t = s$.
Since $s = t$ is the smallest integer such that $1 < a^s$, we know $1 \parallel a^k$ for all $1 < k < s$, thus by (\ref{eq: cyclic monoid *}) $a^k \parallel a^l$ for all $0 \leq k \neq l \leq s-1$.
Since $1 < a^s$, we have $1 < a^s < a^{2s} < \cdots < a^{ms}$, where $ms < r+s \leq (m+1)s$.
Hence $a^i < a^j$ iff $1 < a^{j-i}$ iff $j = i+ns$ for some $n \in \bb{Z}^+$, so $a^i \leq a^j$ iff $j = i+ns$ for some $n \in \bb{N}$ and $\m R$ is of the form as the left in Figure~\ref{fin.cyclicmonid}(left).
Similarly we can prove $\m R$ is of the form as the right in Figure~\ref{fin.cyclicmonid}(right) if $a^t < 1$.
\end{remark}

\subsection{From a semidirect product of a residuated chain and a cancellative monoid; monoid extensions with 2-cocycles}

We first provide a general construction of compact residuated lattices and then show that under certain assumptions a compact residuated lattice is exactly of this form.  

Let $\mathbf{A}$ be a residuated chain, $\mathbf{K}$ a cancellative monoid and $\varphi: \mathbf{K} \rightarrow \m{ResEnd}(\mathbf{A})$ a monoid homomorphism, where $\m{ResEnd}(\mathbf{A})$ is the monoid of residuated maps on the chain $(A, \leq)$ which are also endomorphisms of the monoid $(A, \cdot, 1)$.
If $\varphi, \psi \in \m {ResEnd} (\m A)$ with residuals $\varphi^*$ and $\psi^*$, then $(\psi \circ \varphi)(a) \leq b$ iff $\varphi(a) \leq \psi^*(b)$ iff $a \leq (\varphi^* \circ \psi^*)(b)$ for all $a, b \in A$; so $\psi \circ \varphi$ is also residuated.
Thus, $\m{ResEnd}(\mathbf{A})$ is a submonoid of $\m {End}(\m A)$.
Consequently, the semidirect product $\mathbf{A} \rtimes_{\varphi} \mathbf{K}$ of the monoid reduct of $\mathbf{A}$ and $\mathbf{K}$ with respect to $\varphi$ is also a monoid with multiplication given by
\begin{align*}
    (a_1, k_1) \cdot (a_2, k_2) = (a_1 \varphi_{k_1}(a_2), k_1 k_2),
\end{align*}
for all $(a_1, k_1), (a_2, k_2) \in A \times K$, and identity $(1_{\mathbf{A}}, 1_{\mathbf{K}})$.
We define an order on $\mathbf{A} \rtimes_{\varphi} \mathbf{K}$ by: for all $(a_1, k_1), (a_2, k_2) \in A \times K$,
\begin{align*}
    (a_1, k_1) \leq (a_2, k_2) \text{ if and only if } k_1 = k_2 \text{ and } a_1 \leq a_2.
\end{align*}
Also, we extend the multiplication and order of $\mathbf{A} \rtimes_{\varphi} \mathbf{K}$ to $R = (A \times K) \cup \{\top, \bot\}$ by: $\bot \leq x \leq \top$, $\bot x = x \bot = \bot$ and $\top y = y \top = \top$ for all $x \in R$, $y \not = \bot$.
It is clear that this defines a lattice order; see Figure~\ref{f: semidirect}.
We denote by $\mathbf{A} \rtimes^b_{\varphi} \mathbf{K}$ the resulting bounded lattice-ordered monoid.

\begin{figure}
\centering
\scalebox{0.9}{
    \begin{tikzpicture}
        \draw (3, 3) -- (0, 2);
        \draw[dotted] (0, 2) -- (0, -2);
        \draw (0, -2) -- (3, -3);
        \draw (3, 3) -- (1.5, 2);
        \draw[dotted] (1.5, 2) -- (1.5, -2);
        \draw (1.5, -2) -- (3, -3);
        \draw (3, 3) -- (5, 2);
        \draw[dotted] (5, 2) -- (5, -2);
        \draw (5, -2) -- (3, -3);
        \draw[dotted] (2, 0) -- (4.5, 0);

        \filldraw [color = black, fill = white] (3, 3) circle(2.5pt)
            (3, 3.4) node {$\top$};
        \filldraw [color = black, fill = white] (3, -3) circle(2.5pt)
            (3, -3.4) node {$\bot$};
        \filldraw [color = black, fill = white] (0, 1) circle(2.5pt)
            (-0.8, 1) node {$(a_1, k_1)$};
        \filldraw [color = black, fill = white] (0, 0) circle(2.5pt)
            (-0.3, 0) node {$1$};
        \filldraw [color = black, fill = white] (0, -1) circle(2.5pt)
            (-0.8, -1) node {$(a_2, k_1)$};

        \filldraw [color = black, fill = white] (1.5, 1) circle(2.5pt)
            (2.3, 1) node {$(a_1, k_2)$};
        \filldraw [color = black, fill = white] (1.5, 0) circle(2.5pt);
        \filldraw [color = black, fill = white] (1.5, -1) circle(2.5pt)
            (2.3, -1) node {$(a_2, k_2)$};

        \filldraw [color = black, fill = white] (5, 1) circle(2.5pt);
        \filldraw [color = black, fill = white] (5, 0) circle(2.5pt);
        \filldraw [color = black, fill = white] (5, -1) circle(2.5pt);
    \end{tikzpicture}
}
\caption{A URL based on a semidirect product}
\label{f: semidirect}
\end{figure}
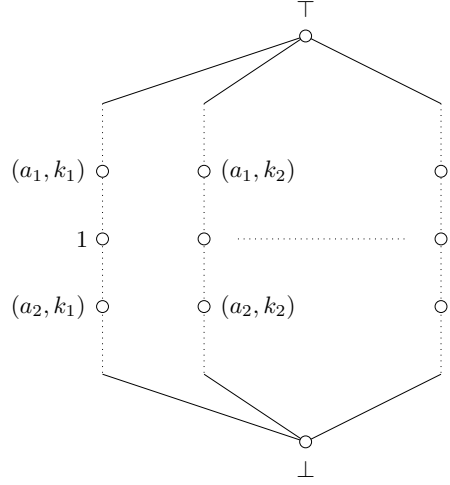

\begin{theorem}\label{construction_on_semidirect_product}
  If $\mathbf{A}$ is a residuated chain, $\mathbf{K}$ is a cancellative monoid and $\varphi: \mathbf{K} \rightarrow \m{ResEnd}(\mathbf{A})$ is a monoid homomorphism, then $\mathbf{A} \rtimes^b_{\varphi} \mathbf{K}$ is a residuated lattice.
\end{theorem}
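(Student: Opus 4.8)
The plan is to apply Corollary~\ref{c: RLmax}. The text preceding the statement already establishes that $\mathbf{A}\rtimes^b_{\varphi}\mathbf{K}$ is a bounded lattice-ordered monoid (associativity comes from the semidirect product together with the absorption behaviour of $\bot$ and $\top$, and the order is a bounded lattice order). So it remains only to check that multiplication is order-preserving and that for every $x,z\in R$ the sets $x\ldd z=\{y\in R: xy\le z\}$ and $z\rdd x=\{y\in R: yx\le z\}$ have maximum elements; Corollary~\ref{c: RLmax} then yields the residuated lattice and forces the divisions.

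For order-preservation I would split on which arguments lie in $\{\bot,\top\}$: since $\bot$ is absorbing and $\top$ is absorbing on $R\setminus\{\bot\}$, the only substantive case is $(a_1,k)\le(a_2,k)$ with $a_1\le a_2$ in $\mathbf{A}$ (two elements of $A\times K$ are comparable only when they share the same $K$-coordinate). Then for $(b,\ell)\in A\times K$ one has $(a_1,k)(b,\ell)=(a_1\varphi_k(b),k\ell)\le(a_2\varphi_k(b),k\ell)=(a_2,k)(b,\ell)$ because multiplication in $\mathbf{A}$ is order-preserving, and $(b,\ell)(a_1,k)=(b\varphi_\ell(a_1),\ell k)\le(b\varphi_\ell(a_2),\ell k)=(b,\ell)(a_2,k)$ because $\varphi_\ell$, being residuated, is order-preserving by Lemma~\ref{res.equivalence}.

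The heart of the argument is the computation of the maxima. When $x=\bot$ or $z=\top$ the set is all of $R$ (maximum $\top$); when $x=\top\ne z$, or $x\in A\times K$ and $z=\bot$, the set is $\{\bot\}$; these and the remaining bound cases are handled exactly as in Remark~\ref{bottopdivison} together with the fact that the algebra is rigorously compact. The one real computation is $x=(a_1,k_1)$, $z=(a_2,k_2)$ in $A\times K$. Since $(a_1,k_1)(b,\ell)=(a_1\varphi_{k_1}(b),k_1\ell)$, the inequality $(a_1,k_1)(b,\ell)\le(a_2,k_2)$ holds iff $k_1\ell=k_2$ --- so $\ell$ is unique if it exists, by cancellativity of $\mathbf{K}$ --- and $a_1\varphi_{k_1}(b)\le a_2$, i.e.\ $b\le\varphi_{k_1}^{*}(a_1\backslash_{\mathbf{A}}a_2)$, using residuation in $\mathbf{A}$ together with the fact that $\varphi_{k_1}$ is residuated with residual $\varphi_{k_1}^{*}$. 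Hence $x\ldd z$ has maximum $(\varphi_{k_1}^{*}(a_1\backslash_{\mathbf{A}}a_2),\ell)$ when such an $\ell$ exists and maximum $\bot$ otherwise. Symmetrically, from $(b,m)(a_1,k_1)=(b\varphi_m(a_1),mk_1)$ one gets that $z\rdd x$ has maximum $(a_2/_{\mathbf{A}}\varphi_m(a_1),m)$, where $m$ is the unique element of $K$ with $mk_1=k_2$, and maximum $\bot$ if no such $m$ exists. Corollary~\ref{c: RLmax} now finishes the proof, and exhibits the divisions as the terms just computed.

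I expect the only delicate point to be the bookkeeping with the bounds and, in particular, keeping in mind that because $\mathbf{K}$ is merely cancellative and not a group the equation $k_1\ell=k_2$ (resp.\ $mk_1=k_2$) genuinely may have no solution, in which case the corresponding division collapses to $\bot$; apart from that, the whole proof is a routine transfer of residuation from $\mathbf{A}$ along the residuated maps $\varphi_k$.
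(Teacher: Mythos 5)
Your proposal is correct and is, in substance, the paper's own argument: the paper proves this theorem by specializing the general $2$-cocycle construction $\m R_{\varphi, f}$ to the trivial cocycle, and the proof of that general theorem is exactly your verification via Corollary~\ref{c: RLmax} --- order-preservation plus maxima of $x \ldd z$ and $z \rdd x$, computed through residuation in $\m A$, the residuals $\varphi_k^*$, and cancellativity of $\m K$ --- only carried out with the extra factor $f(k_1,k_2)^{-1}$. Your direct treatment of the trivial-cocycle case, including the point that $k_1\ell = k_2$ (resp.\ $mk_1 = k_2$) may have no solution so the corresponding maximum is $\bot$, matches the paper's computations.
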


The proof of the above theorem follows from a more general construction.
Given a monoid $\mathbf{K}$, a totally-ordered monoid $\mathbf{A}$ and a map $\varphi: \mathbf{K} \rightarrow \m {ResEnd}(\mathbf{A})$, then a function $f: K \times K \rightarrow A$ is called a \emph{$2$-cocycle}
with respect to $\m K, \m A, \varphi$, if it satisfies the following conditions:
\begin{enumerate}
    \item $f(k_1, k_2)$ is invertible, for all $k_1, k_2 \in K$.
    
    \item $f(k, 1) = f(1, k) = 1$, for all $k \in K$.

    \item $\varphi_{1_{\mathbf{K}}} = \text{id}_{\mathbf{A}}$ and $\varphi_{k_1 k_2}(a) = f(k_1, k_2) \cdot \varphi_{k_1}\varphi_{k_2}(a) \cdot f(k_1, k_2)^{-1}$ for all $k_1, k_2 \in K$ and $a \in A$.
    
    \item $f(k_1, k_2 k_3) \varphi_{k_1}(f(k_2, k_3)) = f(k_1 k_2, k_3) f(k_1, k_2)$, for $k_1, k_2, k_3 \in K$.
\end{enumerate}

Now, given a cancellative monoid $\mathbf{K}$, a residuated chain $\mathbf{A}$, a map $\varphi: K \rightarrow \m {ResEnd}(\mathbf{A})$ and a $2$-cocycle $f: K \times K \rightarrow A$, we define multiplication on $A \times K$ by 
\begin{align*}
    (a_1, k_1) \cdot (a_2, k_2) = (a_1 \varphi_{k_1}(a_2) f(k_1, k_2)^{-1}, k_1 k_2)
\end{align*}
Also, we extend the multiplication to $R = A \times K \cup \{\bot, \top\}$ by making $\bot$ absorbing for $R$ and $\top$ absorbing for $R \setminus\{\bot\}$, 
and we define a lattice ordering $\leq$ by: for all $a, a_1, a_2 \in A$ and $k, k_1, k_2 \in K$, $\bot = \bot < (a, k) < \top = \top$ and
\[
    (a_1, k_1) \leq (a_2, k_2) \text{ iff } a_1 \leq_{\mathbf{A}} a_2 \text{ and } k_1 = k_2.
\] 
We denote the resulting algebra by $\mathbf{R}_{\varphi, f}$.

\begin{theorem}
    If $\mathbf{K}$ is a cancellative monoid, $\mathbf{A}$ is a residuated chain, $\varphi: \mathbf{K} \rightarrow \m {ResEnd}(\mathbf{A})$ is a map, and $f: K \times K \rightarrow A$ is a $2$-cocycle with respect to $\m K$, $\m A$ and $\varphi$, then $\mathbf{R}_{\varphi, f}$ is the reduct of a residuated lattice.
\end{theorem}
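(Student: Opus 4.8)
The plan is to apply Corollary~\ref{c: RLmax}: I would verify that $\m R_{\varphi,f}$ is a bounded lattice-ordered monoid and that, for all $x,z\in R$, the sets $x\ldd z=\{y\in R:xy\le z\}$ and $z\rdd x=\{y\in R:yx\le z\}$ have maximum elements; residuation, together with the uniqueness of the two divisions, then follows automatically. The lattice structure is routine: since $\m A$ is a chain, each fibre $A\times\{k\}$ is a chain (with meets and joins computed in $\m A$), distinct fibres are pairwise incomparable, and adjoining $\bot$ and $\top$ as bottom and top produces a (unilinear) bounded lattice. The multiplication is well defined because $\m K$ is closed under multiplication and $\m A$ is closed under multiplication and under application of the maps $\varphi_k$. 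That $(1_{\m A},1_{\m K})$ is a two-sided identity uses $\varphi_{1_{\m K}}=\mathrm{id}_{\m A}$ and $f(1,k)=f(k,1)=1$ from the cocycle conditions, together with $\varphi_k(1_{\m A})=1_{\m A}$ (each $\varphi_k$ is a monoid endomorphism); the clauses involving $\bot$ and $\top$ are immediate.

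Next I would check associativity, which I expect to be the main computational obstacle and the place where all four cocycle conditions get used. The cases involving a bound are trivial ($\bot$ is absorbing for $R$, $\top$ for $R\setminus\{\bot\}$, and $\bot\top=\bot$), so I would expand $\big((a_1,k_1)(a_2,k_2)\big)(a_3,k_3)$ and $(a_1,k_1)\big((a_2,k_2)(a_3,k_3)\big)$ by the defining formula. Their $\m K$-coordinates agree by associativity of $\m K$. For the $\m A$-coordinate, I would distribute the endomorphism $\varphi_{k_1}$ over the product in the second expression (using invertibility of $f(k_2,k_3)$ so that $\varphi_{k_1}$ commutes with inversion), rewrite $\varphi_{k_1k_2}(a_3)$ in the first expression via the conjugation identity $\varphi_{k_1k_2}(a_3)=f(k_1,k_2)\varphi_{k_1}\varphi_{k_2}(a_3)f(k_1,k_2)^{-1}$, and cancel the inner factor $f(k_1,k_2)^{-1}f(k_1,k_2)$; the two $\m A$-coordinates then coincide exactly when $f(k_1k_2,k_3)f(k_1,k_2)=f(k_1,k_2k_3)\varphi_{k_1}(f(k_2,k_3))$, which is precisely the cocycle identity. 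Order-preservation of multiplication is immediate for the bounds, and within a fibre it reduces to order-preservation of multiplication in $\m A$ together with monotonicity of the $\varphi_k$'s (each $\varphi_k$ is residuated, hence order-preserving by Lemma~\ref{res.equivalence}).

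Finally I would exhibit the maxima. If $x$ or $z$ is a bound, the maximum of $x\ldd z$ (and of $z\rdd x$) is $\top$ when $x=\bot$ or $z=\top$, and $\bot$ otherwise, by inspection. So suppose $x=(a,k)$ and $z=(c,m)$. An element $(a',k')$ lies in $x\ldd z$ iff $kk'=m$ and $a\,\varphi_k(a')\,f(k,k')^{-1}\le_{\m A}c$; by cancellativity of $\m K$ there is at most one such $k'$, and if there is none then $x\ldd z=\{\bot\}$, with maximum $\bot$. If such a $k'$ exists, then since $f(k,k')$ is invertible in $\m A$ the inequality is equivalent to $a\,\varphi_k(a')\le_{\m A}c\,f(k,k')$, hence, by residuation in the chain $\m A$, to $\varphi_k(a')\le_{\m A}a\ld_{\m A}(c\,f(k,k'))$, and finally, using the residual $\varphi_k^{*}$ of $\varphi_k$, to $a'\le_{\m A}\varphi_k^{*}\big(a\ld_{\m A}(c\,f(k,k'))\big)$; since $\bot\in x\ldd z$ and $\top\notin x\ldd z$, this shows $x\ld z=\big(\varphi_k^{*}(a\ld_{\m A}(c\,f(k,k'))),\,k'\big)$ is the maximum. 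The argument for $z\rdd x$ is symmetric, using cancellativity of $\m K$ on the other side and right division in $\m A$. Corollary~\ref{c: RLmax} then gives that $\m R_{\varphi,f}$ is the reduct of a (unique) residuated lattice, which is in fact a compact unilinear residuated lattice. Besides the associativity bookkeeping, the most delicate point is the residuation analysis, where cancellativity of $\m K$ is exactly what singles out the target fibre and the residuated-chain structure of $\m A$ together with the residuals $\varphi_k^{*}$ is what supplies the divisions fibrewise.
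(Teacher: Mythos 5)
Your proposal is correct and follows essentially the same route as the paper's proof: establish the monoid structure (with associativity coming from the conjugation and cocycle identities), check order-preservation, show each set $x \ldd z$ and $z \rdd x$ has a maximum by isolating the unique fibre via cancellativity of $\m K$ and computing the bound through $\varphi_k^*$ and division in $\m A$, and conclude by Corollary~\ref{c: RLmax}. The only cosmetic difference is that the paper first observes $x \ldd z$ is a chain before exhibiting its maximum, whereas you identify the maximum directly; the content is the same.
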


\begin{proof}
    In the following we use $\m R$ for $\mathbf{R}_{\varphi, f}$ and $M$ for $A \times K$.
    Clearly, $M$ is closed under multiplication and $(1, 1)$ is the identity.
    Also, 
    \begin{align*}
        & (a_1, k_1) (a_2, k_2) \cdot (a_3, k_3)\\ 
        = & (a_1 \varphi_{k_1}(a_2) f(k_1, k_2)^{-1}, k_1 k_2) \cdot (a_3, k_3)\\
        = & (a_1 \varphi_{k_1}(a_2) f(k_1, k_2)^{-1} \varphi_{k_1 k_2}(a_3) f(k_1 k_2, k_3)^{-1}, k_1 k_2 \cdot k_3)\\
        = & (a_1 \varphi_{k_1}(a_2) f(k_1, k_2)^{-1} \cdot f(k_1, k_2) \varphi_{k_1}\varphi_{k_2}(a_3) f(k_1, k_2)^{-1} \cdot f(k_1 k_2, k_3)^{-1},\\
        &  k_1 k_2 \cdot k_3)\\
        = & (a_1 \varphi_{k_1}(a_2) \varphi_{k_1}\varphi_{k_2}(a_3) f(k_1, k_2)^{-1} f(k_1 k_2, k_3)^{-1}, k_1 k_2 \cdot k_3)\\
        = & (a_1 \varphi_{k_1}(a_2) \varphi_{k_1}\varphi_{k_2}(a_3) \varphi_{k_1}(f(k_2, k_3)^{-1}) f(k_1, k_2 k_3)^{-1}, k_1 k_2 \cdot k_3)\\
        = & (a_1 \varphi_{k_1}(a_2 \varphi_{k_2}(a_3) f(k_2, k_3)^{-1}) f(k_1, k_2 k_3)^{-1}, k_1 \cdot k_2 k_3)\\
        = & (a_1, k_1) \cdot (a_2 \varphi_{k_2}(a_3) f(k_2, k_3)^{-1}, k_2 k_3)\\
        = & (a_1, k_1) \cdot (a_2, k_2) (a_3, k_3)
    \end{align*}
    where we used the identities
    \begin{gather*}
        \varphi_{k_1 k_2}(a) = f(k_1, k_2) \cdot \varphi_{k_1}\varphi_{k_2}(a) \cdot f(k_1, k_2)^{-1}\\
        f(k_1, k_2 k_3) \varphi_{k_1}(f(k_2, k_3)) = f(k_1 k_2, k_3) f(k_1, k_2)
    \end{gather*}
    and the assumption that $\varphi_k$ is an endomorphism.
    Therefore $\mathbf{M} = (M, \cdot, (1,1))$ is a monoid.
    Since both $\top$ and $\bot$ are absorbing elements for $\m M$ and $\top \bot = \bot \top = \bot$, associativity holds on $\m R$.
    
    We now prove that multiplication is order-preserving:
    $y \leq z \implies (xy \leq xz \text{ and } yx \leq zx)$ for all $x, y, z \in R$.
    If $y = z$ or $x, y, z$ is $\bot$ or $\top$, then it it easy to see that the implication holds; so we assume that $\bot < x < \top$ and $\bot < y < z < \top$.
    Also, we assume that $x = (a_1, k_1)$, $y = (a_2, k_2)$ and $z = (a_3, k_2)$ with $a_2 < a_3$.
    Using the order preservation of $\varphi_{k_1}$ (it is a residuated map) and of multiplication in $\mathbf{A}$, we get
    \begin{align*}
        (a_1, k_1) (a_2, k_2) & = (a_1 \varphi_{k_1}(a_2) f(k_1, k_2)^{-1}, k_1 k_2)\\
        & \leq (a_1 \varphi_{k_1}(a_3) f(k_1, k_2)^{-1}, k_1 k_2)\\
        & = (a_1, k_1) (a_3, k_2)\\
        (a_2, k_2) (a_1, k_1) & = (a_2 \varphi_{k_2}(a_1) f(k_2, k_1)^{-1}, k_2 k_1)\\
        & \leq (a_3 \varphi_{k_2}(a_1) f(k_2, k_1)^{-1}, k_2 k_1)\\
        & = (a_3, k_2) (a_1, k_1)
    \end{align*}
    Next we show that the sets $x \ldd z$ and $z \rdd x$ have maximum elements for all $x, z \in R$.
    By Remark~\ref{bottopdivison}, we know $\bot \ldd z = z \rdd \bot = x \ldd \top = \top \rdd x = R$ for all $x, z \in R$, so the maximum element of all of these sets is $\top$.
    Also, by construction, $x \ldd \bot = \bot \rdd x = \top \ldd z = z \rdd \top = \{\bot\}$ for all $x \in R \setminus \{\bot\}$ and $z \in R \setminus \{\top\}$, so the maximum for all these sets is $\bot$.
    We now assume that $\bot < x, z < \top$ and that $x = (a, k)$ and $z = (a', k')$ for some $(a, k), (a', k') \in A \times K$.

    For all $(a_1, k_1), (a_2, k_2) \in x \ldd z$, we have $(a \varphi_{k}(a_1) f(k, k_1)^{-1}, k k_1) = (a, k) (a_1, k_1) \leq (a', k')$ and $(a \varphi_{k}(a_2) f(k, k_2)^{-1}, k k_2) = (a, k) (a_2, k_2) \leq (a', k')$, so $kk_1 = k' = kk_2$ and $k_1 = k_2$, by the cancellativity of $\m K$.
    Since, $\m A$ is a chain, we get that $(a_1, k_1)$ and $(a_2, k_2)$ are comparable; hence $x \ldd z$ is a chain.
    
    For all $(a'', k'')$, we have that $(a'', k'') \in x \ldd z$ iff $(a, k) (a'', k'') \leq (a', k')$ iff 
    $(a \varphi_k(a'') f(k, k'')^{-1}, k k'') \leq (a', k')$ iff
    ($a \varphi_k(a'') f(k, k'')^{-1} \leq a'$ and $k' = k k''$).
    Since multiplication is residuated, $\varphi_k$ is residuated, say with residual $\varphi_k^*$, and $f(k, k'')$ is invertible, we have: $a \varphi_k(a'') f(k, k'')^{-1} \leq a'$ iff
    $\varphi_k(a'') \leq a \backslash_{\mathbf{A}} a' f(k, k'')$ iff $a'' \leq \varphi_k^*(a \backslash_{\mathbf{A}} a' f(k, k''))$.
    Therefore, we have $(a'', k'') \in x \ldd z$ iff $(a'', k'') \leq (\varphi_k^* (a \backslash_{\mathbf{A}} a' f(k, k'')), k'')$.
    Consequently, $\max(x \ldd z)$ exists and it is one of the elements $\bot, (\varphi_k^*(a \backslash_{\mathbf{A}} a' f(k, k'')), k''), \top$.
    Likewise, $\max(z \rdd x)$ is one of the elements $\bot, (a' f(k'', k) /_{\mathbf{A}} \varphi_{k''}(a), k''), \top$.
   By Corollary~\ref{c: RLmax}, $\mathbf{R}_{\varphi, f}$ is the reduct of a compact residuated lattice.
\end{proof}

So $\mathbf{R}_{\varphi, f}$ is the reduct of a compact residuated lattice, which we will also denote by $\mathbf{R}_{\varphi, f}$ and whose divisions are given by
\begin{align*}
    x \backslash y & =
    \begin{cases}
        \bot & \text{if } x = (a_1, k_1), y = (a_2, k_2) \text{ and } k_2 \notin k_1 K\\
        (\varphi_{k_1}^*(a_1 \backslash_{\mathbf{A}} a_2 f(k_1, k)), k) & \text{if } x = (a_1, k_1), y = (a_2, k_1 k)
    \end{cases}\\
    y / x & =
    \begin{cases}
        \bot & \text{if } x = (a_1, k_1), y = (a_2, k_2) \text{ and } k_2 \notin K k_1\\
        (a_2 f(k, k_1) /_{\mathbf{A}} \varphi_k(a_1), k) & \text{if } x = (a_1, k_1), y = (a_2, kk_1)
    \end{cases}
\end{align*}
and the standard divisions involving $\bot$ and $\top$ are given by Remark~\ref{bottopdivison}.

Theorem~\ref{construction_on_semidirect_product} follows as the special case where the $2$-cocycle is trivial, thus implying that $\varphi$ is a monoid homomorphism.
 
\begin{corollary}
If $\mathbf{K}$ is a cancellative monoid, $\mathbf{A}$ is a residuated chain, $\varphi: \mathbf{K} \rightarrow \m {ResEnd}(\mathbf{A})$ is a map, and $f: K \times K \rightarrow A$ is the trivial $2$-cocycle with respect to $\m K$, $\m A$ and $\varphi$, then $\varphi$ is a homomorphism and $\m R_{\varphi, f} = \m A \rtimes^b_{\varphi} \m K$.
\end{corollary}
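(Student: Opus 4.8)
The plan is to simply unwind the definition of the \emph{trivial} $2$-cocycle---the constant function $f(k_1,k_2)=1$ for all $k_1,k_2\in K$---and check that the four defining conditions of a $2$-cocycle collapse onto the single requirement that $\varphi$ be a monoid homomorphism. First I would observe that condition (1) is automatic, since $1$ is invertible in the monoid reduct of $\mathbf A$ with $1^{-1}=1$; condition (2) holds because $f$ is constantly $1$; and condition (4) reads $1\cdot\varphi_{k_1}(1)=1\cdot 1$, which holds because each $\varphi_k\in\m{ResEnd}(\mathbf A)$ is in particular a monoid endomorphism of $(A,\cdot,1)$, so $\varphi_{k_1}(1)=1$. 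This leaves condition (3): with $f\equiv 1$ it becomes $\varphi_{1_{\mathbf K}}=\mathrm{id}_{\mathbf A}$ and $\varphi_{k_1k_2}(a)=\varphi_{k_1}\varphi_{k_2}(a)$ for all $a\in A$, i.e.\ $\varphi_{k_1k_2}=\varphi_{k_1}\circ\varphi_{k_2}$ in $\m{ResEnd}(\mathbf A)$. Thus the hypothesis that the constant map $1$ is a $2$-cocycle with respect to $\mathbf K,\mathbf A,\varphi$ is precisely the statement that $\varphi\colon\mathbf K\to\m{ResEnd}(\mathbf A)$ is a monoid homomorphism, which is the first assertion.

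For the second assertion I would substitute $f\equiv 1$ into the construction of $\m R_{\varphi,f}$. On $A\times K$ its multiplication becomes
\[
(a_1,k_1)\cdot(a_2,k_2)=(a_1\,\varphi_{k_1}(a_2)\,f(k_1,k_2)^{-1},\,k_1k_2)=(a_1\,\varphi_{k_1}(a_2),\,k_1k_2),
\]
which is exactly the multiplication of the semidirect product $\mathbf A\rtimes_\varphi\mathbf K$; the extension to $R=(A\times K)\cup\{\bot,\top\}$ makes $\bot$ absorbing for $R$ and $\top$ absorbing for $R\setminus\{\bot\}$ in both constructions, and the two lattice orders are given by the identical prescription ($\bot\le x\le\top$, and $(a_1,k_1)\le(a_2,k_2)$ iff $a_1\le_{\mathbf A}a_2$ and $k_1=k_2$). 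Hence $\m R_{\varphi,f}$ and $\mathbf A\rtimes^b_\varphi\mathbf K$ have the same universe and the same lattice-ordered monoid structure, and by Corollary~\ref{c: RLmax} the residuated expansion of a lattice-ordered monoid is unique when it exists, so they coincide as residuated lattices; alternatively, one reads off from the displayed division formulas, with $f\equiv 1$, that $x\backslash y=(\varphi_{k_1}^*(a_1\backslash_{\mathbf A}a_2),k)$ and $y/x=(a_2/_{\mathbf A}\varphi_k(a_1),k)$ in the relevant cases, recovering the divisions of $\mathbf A\rtimes^b_\varphi\mathbf K$.

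There is essentially no obstacle here: the corollary is a bookkeeping consequence of the general theorem on $\m R_{\varphi,f}$. The only point that warrants a moment's care is that conditions (1), (2) and (4) of the $2$-cocycle definition are \emph{vacuous} for the constant function $1$, so that being the trivial $2$-cocycle is \emph{equivalent} to---not merely implied by---$\varphi$ being a homomorphism. I would therefore present the argument in the two short steps above, keeping the collapse of conditions (1)--(4) explicit but brief.
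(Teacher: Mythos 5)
Your proof is correct and matches the paper's (implicit) argument: the paper treats this corollary as the immediate specialization of the $\m R_{\varphi,f}$ construction to the constant cocycle $f\equiv 1$, exactly as you do, with condition (3) collapsing to the homomorphism property of $\varphi$ and the multiplication, order and bounds coinciding with those of $\m A \rtimes^b_{\varphi} \m K$. Your explicit check that conditions (1), (2), (4) are vacuous and your appeal to the uniqueness of the residuated expansion are fine and add nothing beyond the routine verification the paper intends.
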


In particular, when $\varphi$ is trivial we get $\m A \times^b \m K$, where $\m A$ is a residuated chain and $\m K$ is a cancellative monoid.

Note that the examples of section~\ref{s: fin. cyclic monoid} are not embeddable into a residuated lattice of the form $\m A \times^b \m K$.
For example, consider the URL $\m R$ where $R = \{\bot, 1, a, a^2, \top\}$ with $a^3 = a$ and $1 < a^2$.
If $\m R$ were embeddable then we would have $1 \mapsto (1,1)$, $a \mapsto (a_1, k)$, $a^2 \mapsto (a_1^2, k^2)$ and $a^3 \mapsto (a_1^3, k^3)$.
So, $(1, 1) < (a_1^2, k^2)$ implies $1 < a_1^2$ and $k^2 = 1$; thus $1 < a_1$ and $k = 1$.
But then $a_1 \leq a_1^2 \leq a_1^3 = a_1$, so $a_1^2 = a_1$, hence $(a_1^2, k^2) = (a_1, k)$, a contradiction.

Even though not all compact URLs are of the form $\m R_{\varphi,f}$, we show that this holds when the comparability relation on $R \setminus \{\bot, \top\}$ is an \emph{admissible} congruence and the chain of $1$ is \emph{cancellative with respect to} the factor monoid. 

We say that the congruence $\equiv$ on $\m M$ is \emph{admissible} if $x[1]_{\equiv} = [x]_{\equiv} =[1]_{\equiv}x$, for all $x \in M$. 
Also, we say $\mathbf{H}$ is \emph{$\mathbf{K}$-cancellative} if there exists a selection of representatives $^-: K \rightarrow M$ (i.e., for all $x \in M$, if $\overline{k} \equiv x$ then $x \in k$) satisfying $\overline{1_{\mathbf{K}}} = 1_{\m M}$ and the left and right multiplications by $\overline{k}$ are injective on $H$.
The terminology $\m K$-cancellative and $2$-cocycle come from \cite{pavel2002monoid}.

\begin{prop}
    If $\mathbf{R}$ is a compact unilinear residuated lattice, the comparability relation $\equiv$ is an admissible congruence of $\mathbf{M}$, where $M = R \setminus \{\bot, \top\}$, and  $\mathbf{H}$ is $\mathbf{K}$-cancellative, where $H = [1]_{\equiv}$ and $\mathbf{K} = \mathbf{M}/{\equiv}$, then $\mathbf{R} \cong \m R_{\varphi, f}$ for some map $\varphi:K \rightarrow \m{ResAut}(\m H)$ and $2$-cocycle $f: K \times K \ra H$ with respect to $\m H$, $\m K$ and $\varphi$.
\end{prop}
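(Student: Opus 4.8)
The plan is to reconstruct $\m R$ as a twisted monoid extension of its ``kernel'' $\m H = [1]_{\equiv}$ by its ``quotient'' $\m K = \m M/{\equiv}$ in the usual factor-set style, and then check that the resulting data form a $2$-cocycle and that the induced map is a residuated-lattice isomorphism onto $\m R_{\varphi,f}$. \textbf{The conceptual heart is Step~1:} observing that admissibility forces $\m H$ to be a totally-ordered \emph{group}. Indeed, by Lemma~\ref{l: compact url}, $H = [1]_{\equiv}$ is a totally-ordered submonoid of $\m M$, and for every $h \in H$ admissibility gives $hH = [h]_{\equiv} = [1]_{\equiv} = H$ and symmetrically $Hh = H$, so $h$ has a right and a left inverse in $H$. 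Hence $\m H$ is a totally-ordered group, in particular the reduct of a residuated chain (with $h_1 \backslash h_2 = h_1^{-1} h_2$ and $h_2 / h_1 = h_2 h_1^{-1}$), and $\m{ResAut}(\m H)$ is just its group of order-preserving monoid automorphisms. This is also what makes the invertibility clause of the $2$-cocycle definition automatic.

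\textbf{Step 2 (coordinates, $\varphi$, $f$).} Let $^{-}\colon K \to M$ be the section provided by $\m K$-cancellativity, so $\overline{1_K} = 1_M$; I view each $k \in K$ both as an element of $\m K$ and as the corresponding $\equiv$-class inside $M$. Admissibility yields $\overline{k}H = [\overline{k}]_{\equiv} = k = H\overline{k}$, and since left and right multiplication by $\overline{k}$ are injective on $H$, the maps $h \mapsto \overline{k}h$ and $h \mapsto h\overline{k}$ are bijections $H \to k$. Thus every $x \in M$ has a unique expression $x = a\,\overline{k}$ with $k = [x]_{\equiv}$, $a \in H$, and I set $\Theta(x) = (a,k)$, $\Theta(\bot) = \bot$, $\Theta(\top) = \top$, obtaining a bijection $R \to R_{\varphi,f}$. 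For $h \in H$ the element $\overline{k}h$ lies in $[\overline{k}]_{\equiv} = H\overline{k}$, so there is a unique $\varphi_k(h) \in H$ with $\overline{k}h = \varphi_k(h)\overline{k}$; using order-preservation of multiplication, that $\m H$ is a chain, and injectivity of multiplication by $\overline{k}$, one checks $\varphi_k$ is an order-preserving group automorphism of $\m H$ with $\varphi_{1_K} = \mathrm{id}$. Finally $\overline{k_1}\,\overline{k_2}$ lies in $k_1 k_2 = H\,\overline{k_1 k_2}$, so there is a unique $f(k_1,k_2) \in H$ with $\overline{k_1 k_2} = f(k_1,k_2)\,\overline{k_1}\,\overline{k_2}$, equivalently $\overline{k_1}\,\overline{k_2} = f(k_1,k_2)^{-1}\,\overline{k_1 k_2}$.

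\textbf{Step 3 (cocycle axioms and isomorphism).} That $f(k,1_K) = f(1_K,k) = 1_H$ follows from $\overline{k}\,\overline{1_K} = \overline{k} = \overline{1_K}\,\overline{k}$ and injectivity of multiplication by $\overline{k}$ on $H$; the conjugation identity $\varphi_{k_1 k_2}(a) = f(k_1,k_2)\,\varphi_{k_1}\varphi_{k_2}(a)\,f(k_1,k_2)^{-1}$ and the cocycle identity $f(k_1,k_2 k_3)\,\varphi_{k_1}(f(k_2,k_3)) = f(k_1 k_2,k_3)\,f(k_1,k_2)$ are obtained by expanding $\overline{k_1 k_2}\,a$ and the triple product $\overline{k_1}\,\overline{k_2}\,\overline{k_3}$ in the two obvious ways, using only the defining relations for $\varphi$ and $f$, the fact that each $\varphi_k$ is a homomorphism, and associativity in $\m M$; hence $f$ is a $2$-cocycle with respect to $\m H, \m K, \varphi$. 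Then $\Theta$ transports the structure: for $x = a_1\overline{k_1}$, $y = a_2\overline{k_2}$ one computes $xy = a_1\,\varphi_{k_1}(a_2)\,f(k_1,k_2)^{-1}\,\overline{k_1 k_2}$ with $[xy]_{\equiv} = k_1 k_2$, which is exactly the product of $\m R_{\varphi,f}$ (the $\bot,\top$ cases being immediate from $\top$-unitality), $\Theta$ sends $1_M = \overline{1_K}$ to $(1_H,1_K)$, and $a_1\overline{k_1} \le a_2\overline{k_2}$ iff $k_1 = k_2$ and $a_1 \le_{\m H} a_2$ (again by order-preservation of multiplication, $\m H$ a chain, and injectivity of right multiplication by $\overline{k_1}$), so $\Theta$ is a lattice isomorphism. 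Since a residuated lattice is determined by its order and multiplication (Corollary~\ref{c: RLmax}), $\Theta$ is a residuated-lattice isomorphism, proving $\m R \cong \m R_{\varphi,f}$.

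The hard part is really Step~1: noticing that the two hypotheses force $\m H$ to be a group. Without this the statement would not even type-check, since the target $\m R_{\varphi,f}$ presupposes a residuated chain and an invertible-valued cocycle, whereas $[1]_{\equiv}$ need not be a group for a general compact URL (as the finite-cyclic-monoid examples of Section~\ref{s: fin. cyclic monoid} show). The remainder is standard extension-theoretic bookkeeping; the only genuine care is keeping track of the side on which $\varphi$ and $f$ act, so that the derived formulas line up with the definition of $\m R_{\varphi,f}$ and its stated divisions.
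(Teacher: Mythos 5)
Your proposal is correct and follows essentially the same route as the paper's proof: the same section ${}^-\colon K \to M$, the same defining relations $\overline{k}\,h = \varphi_k(h)\,\overline{k}$ and $\overline{k_1 k_2} = f(k_1,k_2)\,\overline{k_1}\,\overline{k_2}$, the same cancellation-based verifications of the cocycle identities, and the same coordinate map $x \mapsto (R_{\overline{k_x}}^{-1}(x), [x]_{\equiv})$, finished off by the same remark that the divisions are determined by order and multiplication. The only real divergence is your opening observation that admissibility forces $\m H = [1]_{\equiv}$ to be a totally-ordered group---a correct point the paper does not make explicit (it instead obtains invertibility of the values $f(k_1,k_2)$ via the companion function $g$), and one that conveniently also confirms that $\m R_{\varphi,f}$ with $\m A = \m H$ is well defined.
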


\begin{proof}
    Since $\mathbf{H}$ is $\mathbf{K}$-cancellative, there exists a selection of representatives $^-: K \rightarrow M$.
    We denote by $L_x$ and $R_x$ the left and right multiplication by $x \in M$, respectively.
    We know that for all $k \in K$, the maps $R_{\overline{k}}, L_{\overline{k}}: H \ra k$ are injective and since $\equiv$ is an admissible congruence on $\mathbf{M}$ and $H = [1]_{\equiv}$, they are also surjective. 
    So, for any $k \in K$, the map $\varphi_k: \mathbf{H} \rightarrow \mathbf{H}$ given by $\varphi_k(h) = R_{\overline{k}}^{-1} L_{\overline{k}}(h)$ is a well-defined bijection on $H$; hence $\overline{k} h = \varphi_k(h) \overline{k}$.
    
    Note that
    \begin{align*}
        \varphi_k(h_1 h_2) \overline{k} & = \overline{k} \cdot h_1 h_2\\
        & = \overline{k} h_1 \cdot h_2\\
        & = \varphi_k(h_1) \overline{k} \cdot h_2\\
        & = \varphi_k(h_1) \cdot \overline{k} h_2\\
        & = \varphi_k(h_1) \cdot \varphi_k(h_2) \overline{k}\\
        & = \varphi_k(h_1) \varphi_k(h_2) \cdot \overline{k}.
    \end{align*}
    Since $\mathbf{H}$ is $\mathbf{K}$-cancellative, we have $\varphi_k(h_1 h_2) = \varphi_k(h_1) \varphi_k (h_2)$.
    Now suppose $h_1 \leq h_2$ for some $h_1, h_2 \in H$.
    Since $\mathbf{R}$ is residuated, we get
    \begin{align*}
        \varphi_k(h_1) \leq \varphi_k(h_2) & \text{ iff } R_{\overline{k}}^{-1} L_{\overline{k}}(h_1) \leq R_{\overline{k}}^{-1} L_{\overline{k}}(h_2)\\
        & \text{ iff } R_{\overline{k}}(R_{\overline{k}}^{-1} L_{\overline{k}} (h_1)) \leq L_{\overline{k}}(h_2)\\
        & \text{ iff } L_{\overline{k}}(h_1) \leq L_{\overline{k}}(h_2).
    \end{align*}
    It follows from the order-preservation of $L_{\overline{k}}$ that $\varphi_k$ is order-preserving.
    So $\varphi_k$ is an automorphism of the totally-ordered monoid $\mathbf{H}$, and $\overline{1_{\mathbf{K}}} = 1_{\mathbf{H}}$ yields $\varphi_{1} = \text{id}_{\mathbf{H}}$.
    
    Since $\equiv$ is admissible on $\mathbf{M}$ and $\mathbf{K} = \mathbf{M}/\equiv$, we have
    \begin{align*}
        \mathbf{H} \overline{k_1 k_2} = k_1 k_2 = \mathbf{H} \overline{k_1} \mathbf{H} \overline{k_2} = \mathbf{H} \overline{k_1} \, \overline{k_2}.
    \end{align*}
    Therefore there exist $f(k_1, k_2)$ and $g(k_1, k_2)$ in $H$ such that
    $$ \overline{k_1 k_2} = f(k_1, k_2) \overline{k_1} \, \overline{k_2}, \qquad  \overline{k_1} \, \overline{k_2} = g(k_1, k_2) \overline{k_1 k_2}$$
    for all $k_1, k_2 \in K$.
    Since $\mathbf{H}$ is $\mathbf{K}$-cancellative, it follows that $f$ and $g$ are well-defined functions from $K \times K$ to $H$.
    Moreover, since $f(k_1, k_2) g(k_1, k_2) = g(k_1, k_2) f(k_1 k_2) = 1$ for all $k_1, k_2 \in K$, we get that $f(k_1, k_2)$ and $g(k_1, k_2)$ are invertible.
    By definition, we have
    $$ \overline{k_2} = f(1_{\mathbf{K}}, k_2) \overline{1_{\mathbf{K}}} \, \overline{k_2}, \qquad  \overline{k_1} = f(k_1, 1_{\mathbf{K}}) \overline{k_1} \, \overline{1_{\mathbf{K}}}.$$
    Again by the $\mathbf{K}$-cancellativity of $\mathbf{H}$, we get $f(1_{\mathbf{K}}, k) = f(k, 1_{\mathbf{K}}) = 1_{\mathbf{H}}$ for all $k \in K$.
    Also, by the definition of $f$, we know
    $$L_{\overline{k_1 k_2}} = L_{f(k_1, k_2)} L_{\overline{k_1}} L_{\overline{k_2}}, \qquad
        R_{\overline{k_1 k_2}} = R_{\overline{k_2}} R_{\overline{k_1}} R_{f(k_1, k_2)}.$$
    Thus by the $\mathbf{K}$-cancellativity of $\mathbf{H}$ we have
    \begin{align*}
        \varphi_{k_1 k_2} & = R^{-1}_{\overline{k_1 k_2}} L_{\overline{k_1 k_2}}\\
        & = R^{-1}_{f(k_1, k_2)} R^{-1}_{\overline{k_1}} R^{-1}_{\overline{k_2}} L_{f(k_1, k_2)} L_{\overline{k_1}} L_{\overline{k_2}}\\
        & = R^{-1}_{f(k_1, k_2)} L_{f(k_1, k_2)} R^{-1}_{\overline{k_1}} L_{\overline{k_1}} R^{-1}_{\overline{k_2}} L_{\overline{k_2}}\\
        & = R^{-1}_{f(k_1, k_2)} L_{f(k_1, k_2)} \varphi_{k_1} \varphi_{k_2}
    \end{align*}
    for all $k_1, k_2 \in K$.
    So we get
    \begin{align*}
        \varphi_{k_1 k_2}(h) = f(k_1, k_2) \cdot_{\mathbf{H}} \varphi_{k_1}\varphi_{k_2}(h) \cdot_{\mathbf{H}} f(k_1, k_2)^{-1}
    \end{align*}
    for all $h \in H$.
    
    Finally, we observe that
    \begin{align*}
       & \overline{k_1 \cdot k_2 k_3} = \overline{k_1 k_2 \cdot k_3}\\
        \text{ iff } & f(k_1, k_2 k_3) \overline{k_1} \, \overline{k_2 k_3} = f(k_1 k_2, k_3) \overline{k_1 k_2} \, \overline{k_3}\\
         \text{ iff } & f(k_1, k_2 k_3) \overline{k_1} f(k_2, k_3) \overline{k_2} \, \overline{k_3} = f(k_1 k_2, k_3) f(k_1, k_2) \overline{k_1} \, \overline{k_2} \cdot \overline{k_3}\\
        \text{ iff } & f(k_1, k_2 k_3) \varphi_{k_1}(f(k_2, k_3)) \overline{k_1} \cdot \overline{k_2} \, \overline{k_3} = f(k_1 k_2, k_3) f(k_1, k_2) \overline{k_1} \, \overline{k_2} \cdot \overline{k_3}.
    \end{align*}
    So by the associativity of $\mathbf{K}$ and the $\mathbf{K}$-cancellativity of $\mathbf{H}$, we get
    \begin{align*}
        f(k_1, k_2 k_3) \varphi_{k_1}(f(k_2, k_3)) = f(k_1 k_2, k_3) f(k_1, k_2)
    \end{align*}
    for all $k_1, k_2, k_3 \in K$.
    Therefore $f$ is a $2$-cocycle with respect to $\m H$, $\m K$ and $\varphi$.

    Finally, we define the map $\psi: \mathbf{R} \ra \m R_{\varphi, f}$, given by $\psi(\bot) = \bot$, $\psi(\top) = \top$ and $\psi(x)=(h_x, k_x)$, where $k_x = [x]_\equiv$ is the chain to which $x$ belongs and $h_x = R_{\overline{k_x}}^{-1}(x)$.
    Since $\equiv$ is admissible, $\m H$ is $\m K$-cancellative and $H$ is totally-ordred, $L_{\overline{k_x}}$ and $R_{\overline{k_x}}$ are order isomorphisms between the sets $H$ and $k_x$, so $\psi$ is well-defined.
    We will show that $\psi$ is a residuated-lattice isomorphism.

    Suppose $\psi(x) = \psi(y)$ for some $x, y \in M$.
    Then $k_x = k_y$ and $h_x = h_y$, i.e., $x \equiv y$ and $R_{\overline{k_x}}^{-1}(x) = R_{\overline{k_y}}^{-1}(y)$.
    Since $R_{\overline{k_x}} = R_{\overline{k_y}}$ is a bijection between $H$ and $k_x$, we have $x = y$.
    For $(h, k) \in H \times K$, let $x = h \overline{k}$.
    Since $R_{\overline{k}}$ is a bijection, we know $h = R_{\overline{k}}^{-1}(x)$, so $\psi(x) = (h, k)$.
    Since $\psi(\bot) = \bot$ and $\psi(\top) = \top$ are uniquelly defined, $\psi$ is a bijection between $R$ and $R_{\varphi, f}$.
    
    Since $R_{\overline{k_x}}$ is an order isomorphism between $\m H$ and the chain $k_x$, $x \leq_{\m R} y$ iff $k_x = k_y$ and $R_{\overline{k_x}}^{-1}(x) \leq_{\m H} R_{\overline{k_y}}^{-1}(y)$, hence $x \leq_{\m R} y$ iff $\psi(x) \leq_{\m R_{\varphi, f}} \psi(y)$ for all $x, y \in M$.
    Since $\psi(\bot) = \bot$ and $\psi(\top) = \top$, $\psi$ is a lattice isomorphism between $\m R$ and $\m R_{\varphi, f}$.
    
    Since $k_{xy} = k_x \cdot_{\m K} k_y$, we have $\overline{k_{xy}} = \overline{k_x k_y} = f(k_x, k_y) \overline{k_x} \, \overline{k_y}$, so for all $x, y \in M$
    \[
    \psi(xy) = (R_{\overline{k_{xy}}}^{-1}(xy), k_{xy}) = (R_{\overline{k_x}}^{-1} R_{\overline{k_y}}^{-1}(xy) f^{-1}(k_x, k_y), k_x k_y).
    \]
    On the other hand,
    \begin{align*}
        \psi(x) \psi(y) = & (R_{\overline{k_x}}^{-1}(x), k_x) (R_{\overline{k_y}}^{-1}(y), k_y)\\
        = & (R_{\overline{k_x}}^{-1}(x) \varphi_{k_x}(R_{\overline{k_y}}^{-1}(y)) f^{-1}(k_x, k_y), k_x k_y)\\
        = & (\varphi_{k_x}(L_{\overline{k_x}}^{-1}(x)) \varphi_{k_x}(R_{\overline{k_y}}^{-1}(y)) f^{-1}(k_x, k_y), k_x k_y)\\
        = & (\varphi_{k_x}(L_{\overline{k_x}}^{-1}(x) R_{\overline{k_y}}^{-1}(y)) f^{-1}(k_x, k_y), k_x k_y)
    \end{align*}
    Since
    \[xy = L_{\overline{k_x}} L_{\overline{k_x}}^{-1}(x) \cdot R_{\overline{k_y}} R_{\overline{k_y}}^{-1}(y) = R_{\overline{k_y}} L_{\overline{k_x}}(L_{\overline{k_x}}^{-1}(x) R_{\overline{k_y}}^{-1}(y)),
    \]
    we have that
    \[
    L_{\overline{k_x}}^{-1}(x) R_{\overline{k_y}}^{-1}(y) = L_{\overline{k_x}}^{-1} R_{\overline{k_y}}^{-1}(xy).
    \]
    So
    \[R_{\overline{k_x}}^{-1} R_{\overline{k_y}}^{-1}(xy) = R_{\overline{k_x}}^{-1} (L_{\overline{k_x}} L_{\overline{k_x}}^{-1}) R_{\overline{k_y}}^{-1}(xy) = \varphi_{k_x}(L_{\overline{k_x}}^{-1} R_{\overline{k_y}}^{-1}(xy)) = \varphi_{k_x}(L_{\overline{k_x}}^{-1}(x) R_{\overline{k_y}}^{-1}(y)),
    \]
    hence
    \[\psi(xy) = \psi(x) \psi(y).\]
    Since $\m R$ is compact, we know $\psi(xy) = \psi(x) \psi(y)$ for all $x, y \in R$.
    So $\psi$ is a lattice-ordered monoid isomorphism.
    Since both of $\m R$ and $\m R_{\varphi, f}$ are residuated lattices, $\psi$ is a lattice and monoid isomorphism, and the divisions are definable by the order and multiplication, we get that $\psi$ is a residuated-lattice isomorphism.
\end{proof}








\begin{thebibliography}{99}

\bibitem{cardona2017fep}
    Cardona, R., Galatos, N.:
    The FEP for some varieties of fully distributive knotted residuated lattices.
    Algebra Universalis
    \textbf{78} (3), 363--376 (2017)

\bibitem{carlisle1971residual}
    Carlisle, W.H.:
    Residual finiteness of finitely generated commutative semigroups.
    Pacific J. Math.
    \textbf{36}, 99--101 (1971)

\bibitem{ciabattoni2017algebraic}
    Ciabattoni, A., Galatos, N., Terui, K.:
    Algebraic proof theory: hypersequents and hypercompletions.
    Ann. Pure Appl. Logic
    \textbf{168} (3), 693--737 (2017)

\bibitem{evans1969some}
    Evans, T.:
    Some connections between residual finiteness, finite embeddability and the word problem.
    J. London Math. Soc. (2)
    \textbf{1}, 399–403 (1969)

\bibitem{fussner2022conic}
    Fussner, W., Galatos, N.:
    Conic Idempotent Residuated Lattices.
    preprint arXiv:2208.09724 (2022)

\bibitem{fussner2022transfer}
    Fussner, W., Metcalfe, G.:
    Transfer theorems for finitely subdirectly irreducible algebras.
    preprint arXiv:2205.05148 (2022)

\bibitem{galatos2017residuated}
    Galatos, N., Jipsen, P.:
    Residuated Lattices of Size up to 6.
    (2017)

\bibitem{galatos2004equational}
    Galatos, N.:
    Equational bases for joins of residuated-lattice varieties.
    Studia Logica
    \textbf{76} (2), 227--240 (2004)

\bibitem{galatos2013residuated}
    Galatos, N., Jipsen, P.:
    Residuated frames with applications to decidability.
    Trans. Amer. Math. Soc.
    \textbf{365} (3), 1219–1249 (2013)

\bibitem{galatos2007residuated}
    Galatos, N., Jipsen, P., Kowalski, T., Ono, H.:
    Residuated lattices: an algebraic glimpse at substructural logics.
    Studies in Logic and the Foundations of Mathematics, Vol. 151.
    Elsevier B. V., Amsterdam (2007)

\bibitem{pavel2002monoid}
    Pavel, L., Sund, T.:
    Monoid extensions admitting cocycles.
    Semigroup Forum.
    \textbf{65} (1), 1--32 (2002)

\bibitem{schein1966homomorphisms}
    Schein, B.M.:
    Homomorphisms and subdirect decompositions of semi-groups.
    Pacific J. Math.
    \textbf{17}, 529--547 (1966)

\bibitem{vaughan1970uncountably}
    Vaughan-Lee, M.R.:
    Uncountably many varieties of groups.
    Bull. London Math. Soc.
    \textbf{2}, 280--286 (1970)

\end{thebibliography}
\end{document}